%
% When numbered option is activated, lines are numbered.
%\documentclass[numbered]{trbunofficial}
%
\documentclass[11pt]{article}

\usepackage[dvips]{epsfig}
\usepackage{graphicx}
\usepackage{amssymb,graphics,amsmath,amsthm,amsopn,amstext,amsfonts,color}
\usepackage{algorithm,algorithmic}

%\usepackage{setspace}
%\doublespacing

%\usepackage[pagewise]{lineno}
%\linenumbers

%-------------------------------------------------
%
\usepackage{subfigure}
\usepackage{multirow}

\setlength{\textheight}{24.0cm} \setlength{\topmargin}{-1.0cm}
\setlength{\textwidth}{17.5cm} \setlength{\oddsidemargin}{-0.5cm}
\setlength{\evensidemargin}{-0.5cm}
\usepackage{multirow}

% \usepackage[colorlinks=true,linkcolor=blue,citecolor=blue]{hyperref}
% For TRB version hide links
%\usepackage[hidelinks]{hyperref}

% Put here what will go to headers as author
%\AuthorHeaders{Pritchard, Macfarlane, and Wang}
%
%\title{A \LaTeX\ Template for Papers Submitted to the Transportation
%Research Board}
%

\newcommand{\ubld}{{\mbox{\bf u}}}

\newcommand{\diag}{{\rm diag}}

\newcommand{\gap}{\vspace{0.1in}}

\newcommand{\Hbld}{{\mbox{\bf H}}}

\newcommand{\Gbld}{{\mbox{\bf G}}}

\newcommand{\wt}{\widetilde}

\newcommand{\wh}{\widehat}

\newcommand{\ol}{\overline}

\newcommand{\Ical}{\mathcal I}

\newcommand{\Ecal}{\mathcal E}

\newcommand{\Gcal}{\mathcal{G}}
\newcommand{\Pcal}{\mathcal{P}}

\newcommand{\alphabf}{\boldsymbol\alpha}
\newcommand{\betabf}{\boldsymbol\beta}
\newcommand{\zetabf}{\boldsymbol\zeta}
\newcommand{\thetabf}{\boldsymbol\theta}

\newcommand{\deltabf}{\boldsymbol\delta}

%---------------------------------------
%

\newcommand{\mycut}[1]{{}}

\newcommand{\argmin}{\operatornamewithlimits{\arg\min}}

\newcommand{\tblue}[1]{\textcolor{black}{#1}}

%----------------------------------------------------------------

 %[section]
\newtheorem{lemma}{Lemma}[section] %[section]
\newtheorem{corollary}{Corollary}[section] %[section]
\newtheorem{proposition}{Proposition}[section] %[section]
 %[section]
\newtheorem{remark}{Remark}[section]%[section]
 %[section]

\begin{document}

\title{Fully Distributed Optimization based CAV Platooning Control under
Linear Vehicle Dynamics}

\author{
 Jinglai Shen\thanks{Jinglai Shen is with Department of Mathematics and
    Statistics, University of Maryland Baltimore County, MD 21250, USA. Email:
    {\tt\small shenj@umbc.edu}.},  \ \ \ \
 Eswar Kumar H. Kammara\thanks{Eswar Kumar H. K. is with Department of Mathematics and
    Statistics, University of Maryland Baltimore County, MD 21250, USA. Email:
    {\tt\small eswar1@umbc.edu}.}, \ \ \ \
   Lili Du\thanks{Lili Du is with  Department of Civil and Coastal Engineering,
University of Florida, Gainesville, FL 32608, USA. Email: {\tt\small lilidu@ufl.edu}.}
     }
 \date{Originally November 23, 2020}

\maketitle

%%%%%%%%%%%%%%%%%%%%%%%%%%%%%%%%%%%%%%%%%%%%%%%%%%%%%%%%%%%%%%%%%%%%%%%%%%%%
%
\begin{abstract}
This paper develops distributed optimization based, platoon centered CAV car following  schemes, motivated by the recent interest in CAV platooning technologies.
Various distributed optimization or control schemes have been developed for CAV platooning. However, most existing distributed schemes for platoon centered CAV control require either centralized data processing or centralized computation in at least one step of their schemes, referred to as partially distributed schemes. In this paper, we develop fully distributed optimization based, platoon centered CAV platooning control under the linear vehicle dynamics via the model predictive control approach with a general prediction horizon. These fully distributed schemes do not require centralized data processing or centralized computation through the entire schemes. To develop these schemes, we propose a new formulation of the objective function and a decomposition method that decomposes a densely coupled central objective function into the sum of several locally coupled functions whose coupling satisfies the network topology constraint. We then exploit the formulation of locally coupled optimization and operator splitting methods to develop fully distributed schemes. Control design and stability analysis is carried out to achieve desired traffic transient performance and asymptotic stability. Numerical tests demonstrate the effectiveness of the proposed fully distributed schemes and CAV platooning control.
\end{abstract}

%%%\hfill\break%
\noindent\textit{Keywords}: Connected and autonomous vehicle, car following control, distributed algorithm, constrained optimization, control and stability

%--------------------------------------------------------------------------
%
\section{Introduction}

The recent advancement of connected and autonomous vehicle (CAV) technologies
provides a unique opportunity to mitigate urban traffic congestion through innovative traffic
flow control and operations. Supported by advanced sensing, vehicle communication, and portable computing technologies, CAVs can sense, share, and process real-time mobility data and conduct cooperative or coordinated driving. This has led to a surging interest in self-driving technologies. Among a number of emerging self-driving technologies, vehicle platooning technology receives substantial attention. Specifically, the vehicle platooning technology  links a group of CAVs through cooperative acceleration or speed control. It allows adjacent group members to travel safely at a higher speed with smaller spacing between them and thus has a great potential to increase lane capacity, improve traffic flow efficiency, and reduce congestion, emission, and fuel consumption \cite{bergenhem2012overview, kavathekar2011vehicle}.

There is  extensive literature on CAV platooning control. The widely studied approaches include adaptive cruise control (ACC) \cite{kesting2008adaptive, li2011model, marsden2001towards, vander2002effects, zhou2017rolling}, cooperative adaptive cruise control (CACC)  \cite{shladover2015cooperative, Shladover2012, VanA2006, zhao2020distributionally}, and platoon centered vehicle platooning control \cite{GongDu_TRB18, GShenDu_TRB16, wang2019real, wang2014rolling}. The first two approaches intend to improve an individual vehicle's safety, mobility, and string stability rather than systematical performance of the entire platoon, even though enhanced system performance is validated by analysis, simulations, or field experiments.
In contrast, the platoon centered approach aims to improve the performance of the entire platoon and seeks a control input that optimizes the platoon's transient traffic dynamics for a smooth traffic flow while achieving stability and other desired long-time dynamical behaviors.

The platoon centered CAV platooning control often gives rise to sophisticated, large-scale optimal control or optimization problems, and requires extensive computation. In order to successfully implement these control schemes, efficient real-time computation is needed \cite{wang2019real}. However, due to high computation load and the absence of roadside computing facilities, centralized computation is either inefficient or infeasible \cite{wang2016cooperative}.
By leveraging portable computing capability of
each vehicle, distributed computing is a favorable option because it has potentials to be more adaptive to different platoon network topologies, be more robust to network malfunctions,
and accommodate for communication delays effectively \cite{mesbahi2010graph, wang2016cooperative}. In spite of these advantages, the development of efficient distributed algorithms to solve platoon centered optimization or optimal control problems in  real-time is nontrivial,
especially under complicated traffic conditions and constraints.
A number of effective distributed control or optimization schemes have been proposed for CAV platooning \cite{wang2019real, wang2016cooperative, zhao2020distributionally, zhou2019distributed}. In particular, the recent paper \cite{GShenDu_TRB16} develops  model predictive control (MPC) based car-following control schemes for CAV platooning  by exploiting transportation, control and optimization methodologies. These control schemes take vehicle constraints, transient dynamics and and asymptotic dynamics of the entire platoon into account, and can be computed in a distributed manner.
The paper \cite{GongDu_TRB18} extends these distributed schemes to a mixed traffic flow including both CAVs and human-driven vehicles.
However, to the best of our knowledge,  the proposed schemes in \cite{GongDu_TRB18, GShenDu_TRB16} as well as many other existing  distributed or decentralized schemes \cite{KNS_SIOPT11} either require all vehicles to exchange information with a central component for centralized data processing or perform centralized computation in at least one step of these schemes. We refer to these schemes as {\em partially distributed} schemes.
In contrast, the distributed schemes developed in this paper do not require centralized data processing or carry out centralized computation through the entire schemes and thus are called {\em fully distributed}. Distinct advantages of fully distributed schemes include but are not limited to: (i) no data synchronization is needed such that no central computing equipment is required; and (ii) each vehicle only interacts with its nearby vehicles through a vehicle communication network. Hence, these schemes impose less restriction on vehicle communication networks and can be easily implemented on a wide range of vehicle networks. They are also suitable for a large CAV platoon in remote areas where communication network is unreliable or roadside equipments are scarce. Further, they are more robust to network malfunction or cyber attacks.

In this paper, we
develop fully distributed optimization based and platoon centered CAV car following control  schemes
over a general vehicle communication network. We propose a general $p$-horizon MPC model subject to the linear vehicle dynamics of the CAVs and various physical or safety constraints. Typically, a fully distributed optimization scheme requires the objective function and constraints of the underlying optimization problem to be decoupled \cite{HuXiaoLiu_CDC18}. However, the constrained optimization problem arising from the proposed MPC model does not satisfy this requirement since its objective function is densely coupled and its constraints are locally coupled; see Remark~\ref{remark:local_coupled_constraint} for more details. Therefore, this paper develops new techniques to overcome this difficulty.

The main contributions of this paper are summarized as follows:
\begin{itemize}
  \item [(1)] We propose a new form of the objective function in the MPC model with new sets of weight matrices. This new formulation facilitates the development of fully distributed schemes and closed loop stability analysis whereas it can achieve desired traffic transient performance of the whole platoon. Based on the new formulation, a decomposition method is developed for the strongly convex quadratic objective function. This method decomposes the central objective function
      into the sum of locally coupled (strongly) convex quadratic functions, where local coupling satisfies the network topology constraint under a mild assumption on network topology. Along with locally coupled constraints in the MPC model, the underlying optimization model is formulated as a locally coupled convex quadratically constrained quadratic program (QCQP).

  \item [(2)] Fully distributed schemes are developed for solving the above-mentioned convex QCQP arising from the MPC model using the techniques of locally coupled optimization and operator splitting methods. Specifically, by introducing copies of local coupling variables of each vehicle, an augmented optimization model is formulated with an additional consensus constraint. A generalized Douglas-Rachford splitting method based distributed scheme is developed, where only local information exchange is needed, leading to a fully distributed scheme. Other operator splitting method based distributed scheme are also discussed.

  \item [(3)] The new formulation of the weight matrices and objective function leads to different closed loop dynamics in comparison with that in \cite{GShenDu_TRB16}. Besides, since a general $p$-horizon MPC is considered, it calls for new stability analysis of the closed loop dynamics. We perform detailed stability analysis and choose suitable weight matrices for desired traffic transient performance for a general horizon length $p$. In particular, we prove that up to a horizon of $p = 3$, the closed loop dynamic matrix is Schur stable. Extensive numerical tests are carried out to test the proposed distributed schemes under different MPC horizon $p$'s and to evaluate the closed loop stability and performance.
\end{itemize}

The rest of the paper is organized as follows. Section~\ref{sect:dynamics_constraint_topology} introduces the linear vehicle dynamics, state and control constraints, and vehicle communication networks. The model predictive control model with a general prediction horizon $p$ is proposed and formulated as a constrained optimization problem in Section~\ref{sect:MPC_formulation}; fundamental properties of this optimization problem are established. Section~\ref{sect:dist_scheme} develops fully distributed schemes by exploiting a decomposition method for the central quadratic objective function, locally coupled optimization, and operator splitting methods. Control design and stability analysis for the closed loop dynamics is presented in Section~\ref{sect:control_analysis} with numerical results given in Section~\ref{sect:numerical_results}. Finally, conclusions are given in Section~\ref{sect:conclusion}.

%--------------------------------------------------------------------------
%

\section{Vehicle Dynamics, Constraints, and Communication Networks} \label{sect:dynamics_constraint_topology}

We consider a platoon consisting of vehicles on a roadway, where the (uncontrolled) leading vehicle is labeled by the index 0 and its $n$  following CAVs are labeled by the indices $i=1, \ldots, n$, respectively. Let $x_i, v_i$ denote the longitudinal position and speed of the $i$th vehicle, respectively. Let $\tau>0$ be the sampling time, and each time interval is given by $[k\tau, (k+1)\tau)$ for $k \in \mathbb Z_+:=\{0, 1, 2, \ldots\}$.  We
consider the following  kinematic model for linear vehicle dynamics that is
widely adopted in system-level studies with the acceleration $u_i(k)$ as the control input:
\begin{eqnarray} \label{eqn:model_longit_double_integrator}
  x_i(k+1) \, = \, x_i(k) + \tau v_i(k) + \frac{\tau^2}{2} u_i(k), \qquad
  v_i(k+1)  \, = \, v_i(k) + \tau u_i(k).
\end{eqnarray}

\noindent {\bf State and control constraints.} \ Each vehicle in a platoon is subject to several important state and control constraints. For each $i=1, \ldots, n$,
\begin{itemize}
  \item [(i)] \underline{Control constraints}: %for the model (\ref{eqn:model_longit_double_integrator}),
$ a_{\min} \le u_i \le a_{\max}$, where
$a_{\min}<0$ and $a_{\max}>0$ are pre-specified  acceleration and deceleration bounds for a vehicle.

 \item [(ii)] \underline{Speed constraints}: $v_{\min} \le v_i \le v_{\max}$, where $0\le v_{\min}<v_{\max}$ are pre-specified bounds on longitudinal speed for a vehicle;

 \item [(iii)] \underline{Safety distance constraints}: these constraints guarantee sufficient spacing between neighboring vehicles to  avoid collision even if the leading vehicle comes to a sudden stop. This gives rise to the safety distance constraint of the following form:
 \begin{equation}
     x_{i-1} - x_{i}  \, \ge \, L + r \cdot v_i - \frac{(v_i-v_{\min})^2}{2a_{\min}}, \label{eqn:safety_constraint}
\end{equation}
where $L>0$ is a constant depending on vehicle length, and $r$ is the reaction time.
\end{itemize}
 Note that constraints (i) and (ii) are decoupled across vehicles, whereas the safety distance constraint (iii) is state-control coupled since such a constraint involves  control inputs of two vehicles. This yields challenges to distribution computation.
Further, we consider the identical bounds of the acceleration and deceleration and ignore their variations due to road surface condition changes in this paper, although the proposed approach can handle a general case with different  acceleration or deceleration bounds.

\gap

\noindent {\bf Communication network topology.}
We consider a general communication network whose topology is modeled by a  graph $\Gcal(\mathcal V, \mathcal E)$,
 where $\mathcal V=\{1, 2, \ldots, n\}$ is the set of nodes where the $i$th node corresponds to the $i$th CAV, and $\mathcal E$ is the set of edges connecting two nodes in $\mathcal V$.
 Let $\mathcal N_i$ denote the set of neighbors of node $i$, i.e., $\mathcal N_i =\{ j \, | \, (i, j) \in \mathcal E\}$.
The following assumption on the communication network topology is made throughout the paper:
\begin{itemize}
  \item [$\bf A.1$] The graph $\mathcal G(\mathcal V, \mathcal E)$ is undirected and connected. Further, two neighboring vehicles form a  bidirectional edge of the graph, i.e., $(1, 2), (2, 3), \ldots, (n-1, n) \in \mathcal E$.
\end{itemize}
Since the graph is undirected,  for any $i, j\in \mathcal V$ with $i \ne j$, $(i, j) \in \mathcal E$ means that there exists an edge between node $i$ and node $j$. In other words, vehicle $i$ can receive information from  vehicle $j$ and send information to vehicle $j$, and so does vehicle $j$. We also assume that the first vehicle can detect and receive  $x_0$, $v_0$ and $u_0$ from the leading vehicle.
%
% two nodes are connected by an edge in $\mathcal E$, then two vehicles represented by the two %nodes can exchange information.
%
The setting given by $\bf A.1$ includes many widely used communication networks of CAV platoons, e.g., immediate-preceding,  multiple-preceding, and  preceding-and-following networks \cite{ZhengLiBorrelliH_TCS16}.
%

%--------------------------------------------------------------------------
%
\section{Model Predictive Control for CAV Platooning Control} \label{sect:MPC_formulation}

We exploit the model predictive control (MPC) approach for car following control of a platoon of CAVs.
Let $\Delta$ be the desired (constant) spacing between two adjacent vehicles, and $(x_0, v_0, u_0)$ be the position, speed, and control input of the leading vehicle, respectively. Define the vectors:
(i) $z(k):=\big( x_0-x_1-\Delta, \ldots, x_{n-1}-x_n-\Delta\big)(k) \in \mathbb R^n$, representing the relative spacing error; (ii) $z'(k):=\big( v_0-v_1, \ldots, v_{n-1}-v_n\big)(k) \in \mathbb R^n$, representing the relative speed between adjacent vehicles; and (iii) $u(k):=\big(u_1, \ldots, u_n\big)(k)\in \mathbb R^n$, representing the control input.
Further, let $w_i(k):= u_{i-1}(k) - u_i(k)$ for each $i=1, \ldots, n$, and $w(k):= \big( w_1, \ldots, w_n \big)(k) \in \mathbb R^n$, which stands for the difference of control input between adjacent vehicles. Hence, for any $k \in \mathbb Z_+$, $u(k) = - S_n w(k) + u_0(k) \cdot \mathbf 1$, where $\mathbf 1 :=(1, \ldots, 1)^T$ is the vector of ones, and
%
%%\begin{linenomath}
\begin{equation} \label{eqn:matrix_S_n}
    S_n \, := \, \begin{bmatrix}
1&0&0&  \hdots &0\\
1 &1& 0&  \hdots &0\\
\vdots &\vdots & \ddots  & \ddots & \vdots\\
1 & 1 &\hdots& 1  & 0\\
1 & 1 &  \hdots & 1 &1 \\
\end{bmatrix} \in \mathbb R^{n\times n}, \qquad  S^{-1}_n \, = \, \begin{bmatrix} 1& & & & \\
 -1 & 1& & &   \\
 &  \ddots & \ddots &  &  \\
   & & -1 & 1 &  \\
   &  &  & -1 & 1\\
\end{bmatrix} \in \mathbb R^{n\times n}.
\end{equation}
%%\end{linenomath}

Given a prediction horizon $p\in \mathbb N$, the $p$-horizon MPC control is determined by solving the following constrained optimization problem at each $k\in \mathbb Z_+$, involving all vehicles' control inputs for given feasible state $(x_i(k), v_i(k))^n_{i=1}$ and $(x_0(k), v_0(k), u_0(k))$ at time $k$ subject to the vehicle dynamics model (\ref{eqn:model_longit_double_integrator}):
%
%%\begin{linenomath}
\begin{align}
 & \ \mbox{minimize}  \ \ J(u(k), \ldots, u(k+p-1) )  :=  \label{eq:MPC} \\
 & \ \frac{1}{2} \sum^p_{s=1} \Big(  \underbrace{\tau^2 u^T(k+s-1) S^{-T}_n Q_{w, s} S^{-1}_n  u(k+s-1)}_{\mbox{ride comfort}} +   \underbrace{z^T(k+s) Q_{z, s} z(k+s) + (z'(k+s))^T Q_{z', s} z'(k+s)}_{\mbox{traffic stability and smoothness}} \Big)   \notag
\end{align}
%%\end{linenomath}
subject to: for each $i=1, \ldots, n$ and each $s=1, \ldots, p$,
%
%
%
%%\begin{linenomath}
\begin{eqnarray}
  a_{\min}  & \le  u_i(k+s-1) \ \le  a_{\max}, & \qquad v_{\min}  \, \le \, v_i(k+s) \ \le \ v_{\max}, \label{eqn:MPC:u_constriant} \\
  %
  %\mbox{Type II}: &   x_{i-1}(k+s)-x_i(k+s)   & \ge   L_2 + r_2 \cdot v_i(k+s), \quad \mbox{ or }  %\label{eqn:MPC:safety_constraint_TypeII} \\
  %
 &   x_{i-1}(k+s)-x_i(k+s)  & \ge   L + r \cdot v_i(k+s)  - \frac{(v_i(k+s)-v_{\min})^2}{2a_{\min}},  \label{eqn:MPC:safety_constraint}
\end{eqnarray}
where $Q_{z, s}$, $Q_{z', s}$ and $Q_{w, s}$ are $n\times n$ symmetric positive semidefinite weight  matrices to be discussed soon. Note that when $p>1$, $(x_0(k+s+1), v_0(k+s+1), u_0(k+s))$  are unknown at time $k$ for $s=1, \ldots, p-1$. In this case, we  assume that $u_0(k+s)=u_0(k)$ for all $s=1, \ldots, p-1$ and use these $u_0(k+s)$'s and the vehicle dynamics model (\ref{eqn:model_longit_double_integrator}) to predict $(x_0(k+s+1), v_0(k+s+1))$ for $s=1, \ldots, p-1$.  Here we assume that $u_0(k)$ represents the actual acceleration of the leading vehicle at time $k$.

\begin{remark} \rm \label{remark:objective_func}
The three terms in the objective function $J$ intend to minimize traffic flow oscillations via mild control:
the first term  penalizes the magnitude of control, whereas the second and last terms penalize the  variations of the relative spacing and relative speed, respectively. The presence of the matrix $S_n$ in the first term is due to the coupled vehicle dynamics through the CAV platoon. To illustrate this, let $\wt w(k+s-1):=w(k+s-1) - u_0(k) \cdot \mathbf e_1$ for $s=1, \ldots, p$. Thus  $\wt w(k+s-1) = - S^{-1}_n u(k+s-1)$ for each $s=1, \ldots, p$.  Therefore, the first term in $J$ satisfies $ \tau^2 u^T(k+s-1)  S^{-T}_n Q_{w, s} S^{-1}_n u(k+s-1) =  \tau^2 \wt w^T(k+s-1) \, Q_{w, s} \, \wt w(k+s-1)$ for each $s$.
\end{remark}

The weight matrices $Q_{z, s}$, $Q_{z', s}$, and $Q_{w, s}$, $s=1, \ldots, p$ determine transient and asymptotic dynamics, and they depend on vehicle network topologies and can be chosen by stability analysis and transient dynamics criteria of the closed loop system. To develop fully distributed schemes for a broad class of vehicle network topologies and to facilitate control design and analysis, we make the following blanket assumption on $Q_{z, s}$, $Q_{z', s}$, and $Q_{w, s}$ throughout the rest of the paper:
\begin{itemize}
  \item [$\bf A.2$] For each $s=1, \ldots, p$, $Q_{z, s}$ and $Q_{z', s}$ are diagonal and positive semidefinite (PSD), and $Q_{w, s}$ is diagonal and positive definite (PD).
\end{itemize}

The reasons for considering this class of diagonal positive semidefinite or positive definite weight matrices are three folds: (i) Diagonal matrices have a simpler interpretation in transportation engineering so that the selection of such matrices is easier to practitioners. For instance, the diagonal $Q_{z, s}$ and $Q_{z', s}$ mean that one imposes penalties on each element of $z(k+s)$ and $z'(k+s)$ without considering their coupling. Further, by suitably choosing the weight matrices $Q_{w, s}$, it can be shown that the ride comfort term in equation (\ref{eq:MPC}), %
which corresponds to acceleration of CAVs,
is similar to imposing direct penalties on $u_i$'s, which simplifies control design. (ii) This class of weight matrices facilitates the development of fully distributed schemes for general vehicle network topologies. (iii) Closed-loop stability and performance analysis is relatively simpler (although still nontrivial) when using this class of weight matrices.
The detailed discussions of choosing diagonal, positive semidefinite or positive definite weight matrices for satisfactory closed loop dynamics will be given in Section~\ref{sect:control_analysis}.

The sequential feasibility  has been established in \cite{GongDu_TRB18, GShenDu_TRB16}  for the MPC model (\ref{eq:MPC}) when $r \ge \tau$.
Define
%\begin{linenomath}
$
 \Pcal((x_i, v_i)^n_{i=0}, u_0) \, : = \,  \{ u \in \mathbb R^n \, | \, a_{\min} \le u_i \le a_{\max}, \, v_{\min} \le v_i + \tau u_i \le v_{\max},  \, h_{i}(u) \le 0, \ \forall \, i=1, \ldots, n\},
$
%\end{linenomath}
where $h_{i}(u)  := L + r ( v_i + \tau u_i) - \frac{(v_i + \tau u_i - v_{\min})^2}{2 a_{\min}} +
 (x_i - x_{i-1}) + \tau ( v_i - v_{i-1}) + \frac{\tau^2}{2}[ u_{i} - u_{i-1}]$  for each $i=1, \ldots, n$.
Specifically, the sequential feasibility implies that for any feasible $x_i(k), v_i(k), u_0(k)$ at time $k$, the constraint set $\Pcal((x_i(k), v_i(k))^n_{i=0}, u_0(k))$ is non-empty such that the MPC model (\ref{eq:MPC}) has a solution $u_*(k)$ such that  the constraint set $\Pcal((x_i(k+1), v_i(k+1))^n_{i=0}, u_0(k+1))$ is non-empty.
Using this result, we show below that under a mild assumption, the constraint sets of the MPC model have nonempty interior for any MPC horizon $p \in \mathbb N$. This result is important to the development of distributed algorithms.

\begin{corollary} \label{coro:nonempty_interior_general_p}
Consider the linear vehicle dynamics (\ref{eqn:model_longit_double_integrator}) and assume $r \ge \tau$.  Suppose the leading vehicle is such that $(v_0(k), u_0(k))$ is feasible and $v_0(k)>v_{\min}$ for all $k\in \mathbb Z_+$. Then the constraint set of the $p$-horizon MPC model (\ref{eq:MPC}) has nonempty interior at each $k$.
\end{corollary}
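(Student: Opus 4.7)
The goal is to exhibit a strictly feasible point of the $p$-horizon constraint set, which by convexity (each $h_i$ is convex in $u$, since the quadratic coefficient $-1/(2a_{\min}) > 0$) is equivalent to nonempty interior. My plan is to chain together strict feasibility at each of the $p$ one-step stages, starting from the sequential feasibility of the $p=1$ case recalled from \cite{GongDu_TRB18, GShenDu_TRB16}.

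For the first stage, I would invoke sequential feasibility inductively. Starting from the feasible state at time $k$, sequential feasibility produces a feasible $u(k) \in \Pcal((x_i(k), v_i(k))^n_{i=0}, u_0(k))$ such that the propagated state at $k+1$ via (\ref{eqn:model_longit_double_integrator}) is also feasible; combining with the standing hypotheses $v_0(k+s) > v_{\min}$ and feasibility of $(v_0(k+s), u_0(k+s))$ for every $s$, I obtain a feasible horizon sequence $(u(k), \ldots, u(k+p-1))$ for the full $p$-horizon set. Because strict feasibility is an open condition preserved under small perturbations of the upstream controls, it suffices to establish strict feasibility at each of the $p$ one-step subproblems; the $p$-horizon interior point is then assembled by taking sufficiently small strict perturbations stagewise so that every downstream one-step set (whose data depends continuously on the upstream controls) still admits a strict interior point.

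The main obstacle is the Slater-type construction of a strict interior point of the single-step set $\Pcal((x_i, v_i)^n_{i=0}, u_0)$. My plan rests on three structural observations: (a) the control and speed bounds are open box constraints in $u_i$; (b) each safety function $h_i$ is strictly convex in $u_i$ with leading coefficient $-\tau^2/(2 a_{\min}) > 0$ and only linear in $u_{i-1}$; and (c) the strict gap $v_0 > v_{\min}$ together with $r \ge \tau$ supplies the same margin that was used in the sequential feasibility proof, ensuring that the safety inequality for the first follower is not enforced at a degenerate corner. Starting from a feasible $u_*$, I would construct a perturbation $u_* + \delta$ by selecting $\delta_i$ sequentially for $i = 1, \ldots, n$: once $\delta_{i-1}$ is fixed, strict convexity of $h_i$ in $u_i$ combined with the box structure leaves an open interval of admissible $\delta_i$, from which a small value can always be chosen. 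The technical subtlety is to uniformly control the downstream propagation of these perturbations along the chain so that every strict inequality retains a common positive margin, which I would handle by bounding $\|\delta\|_\infty$ sufficiently small at the outset and then arguing by continuity.
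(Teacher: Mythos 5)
Your overall architecture---chaining strict one-step feasibility across the $p$ stages---is exactly the paper's. The difference is that the ``main obstacle'' you identify, namely the Slater-type construction of an interior point of the one-step set $\Pcal((x_i,v_i)^n_{i=0},u_0)$ under $v_0>v_{\min}$ and $r\ge\tau$, is precisely \cite[Proposition 3.1]{GShenDu_TRB16}, which the paper simply cites; it then generates the state at $k+1$ from that interior control and invokes the same proposition again, repeating $p$ times. Two remarks on your version. First, your reconstruction of the one-step result is where all the real work lies, and as written it has a soft spot: the claim that, once $\delta_{i-1}$ is fixed, strict convexity of $h_i$ in $u_i$ together with the box structure ``leaves an open interval of admissible $\delta_i$'' does not follow from convexity alone---a feasible $u_{*,i}$ could sit at a corner where the speed bound $v_{\min}\le v_i+\tau u_i$ and $h_i\le 0$ leave no open slack, and ruling this out is exactly where $v_0>v_{\min}$ and $r\ge\tau$ must be propagated down the whole chain, not just to the first follower. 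Citing the proposition is the clean way out. Second, your perturbation/uniform-margin layer is unnecessary: there is no need to start from a merely feasible horizon sequence and then perturb while controlling downstream effects. Once each one-step set admits an interior point, simply define the stage-$(s+1)$ state from the strictly interior control already chosen at stage $s$; the resulting vector in $\mathbb R^{np}$ satisfies each of the finitely many continuous constraints with strict inequality, which by itself makes it an interior point of the $p$-horizon constraint set.
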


\begin{proof}
   Fix an arbitrary $k \in \mathbb Z_+$. Since $v_0(k)>v_{\min}$, it follows from
   \cite[Proposition 3.1]{GShenDu_TRB16} that there exists a vector denoted by $\wh u(k)$ in the interior of the set
   $\Pcal((x_i(k), v_i(k))^n_{i=0}, u_0(k))$. Let $x_i(k+1)$ and $v_i(k+1)$ be generated by $\wh u(k)$ (and $(x_i(k), v_i(k))^n_{i=0}, u_0(k)$). Since $v_0(k+1)>v_{\min}$, we deduce via \cite[Proposition 3.1]{GShenDu_TRB16} again that there exists a vector denoted by $\wh u(k+1)$ in the interior of the constraint set $\Pcal((x_i(k+1), v_i(k+1))^n_{i=0}, u_0(k+1))$. Continuing this process in $p$-steps, we derive the existence of an interior point in the constraint set of the $p$-horizon MPC model (\ref{eq:MPC}).
\end{proof}

%-----------------------------------------------------------------------------
%
\subsection{Constrained MPC Optimization Model} \label{subsect:optimization_linear_dynamics}

%
%We discuss the constrained optimization model (\ref{eq:MPC})  arising from the MPC at each time %$k$.
%
Consider the constrained MPC optimization model (\ref{eq:MPC}) at an arbitrary but fixed time $k \in \mathbb Z_+$ subject to the linear vehicle dynamics (\ref{eqn:model_longit_double_integrator}).
In view of the following results: for each $s=1, \ldots, p$,
%%\begin{linenomath}
\begin{align*}
  v_i(k+s) & = v_i(k) + \tau \sum^{s-1}_{j=0} u_i(k+j), \quad z'(k+s) = z'(k) + \tau \sum^{s-1}_{j=0} w(k+j), \\
  z(k+s) & = z(k) + s \tau z'(k) + \tau^2 \sum^{s-1}_{j=0} \frac{2(s-j)-1}{2} w(k+j), \quad w(k+s) = S^{-1}_n \big[ - u(k+s) + u_0(k) \cdot \mathbf 1 \big],
\end{align*}
%%\end{linenomath}
we formulate (\ref{eq:MPC}) as the constrained convex minimization problem (where we omit $k$ since it is fixed):
%
%%%\begin{linenomath}
\begin{equation}
\begin{array}{ll}
\mbox{minimize} & J(\ubld):= \frac{1}{2} \ubld^T W \ubld + c^T \ubld + \gamma,  \label{eq:Multi_user} \\  %[0.06in]
%
%%[0.12in]
\mbox{subject to} &  \ubld_i \in \mathcal X_i, \quad  (H_i( \ubld ))_s \le 0, \quad \forall \, i=1,\ldots, n, \quad \forall \, s=1, \ldots, p,
\end{array}
\end{equation}
%%%\end{linenomath}
%
where %%%$S_p$ is the $p\times p$ matrix of the form given by (\ref{eqn:u_w}),
$\ubld:=(\ubld_1, \ldots, \ubld_n)\in \mathbb R^{np}$ with $\ubld_i:=( u_i(k), \ldots, u_i(k+p-1))\in \mathbb R^{p}$, $W$ is a PD matrix to be shown in Lemma~\ref{lem:W_PD} below, $c \in \mathbb R^{n p}$, $\gamma\in \mathbb R$,  each $\mathcal X_i :=\{ z \in \mathbb R^p \, | \, a_{\min} \cdot \mathbf 1 \le z \le a_{\max} \cdot \mathbf 1, \ (v_{\min}-v_i(k)) \cdot \mathbf 1  \le \tau S_p z \le (v_{\max}-v_i(k)) \cdot \mathbf 1 \}$ is a polyhedral set, and each $(H_i( \cdot ))_s$ is a convex quadratic function characterizing the safety distance given by (\ref{eqn:H_function_safety_distance}). Here $S_p$ is the $p\times p$ matrix of the form given by (\ref{eqn:matrix_S_n}). Further, $\mathbf u_0 := u_0(k) \cdot \mathbf 1_p \in \mathbb R^p$ for the given $u_0(k)$. An important property of the matrix $W$ in (\ref{eq:Multi_user}) is given below.

\begin{lemma} \label{lem:W_PD}
 Suppose that $Q_{z, s}$ and $Q_{z', s}$ are PSD and $Q_{w, s}$ are PD for all $s=1, \ldots, p$ (but not necessarily diagonal). Then the matrix $W$ in (\ref{eq:Multi_user}) is PD.
\end{lemma}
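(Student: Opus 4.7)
The plan is to split the Hessian $W$ into the contributions from the three terms in (\ref{eq:MPC}), show that the ride comfort contribution alone is already PD (thanks to the invertibility of $S_n$ and the PD hypothesis on $Q_{w,s}$), and that the relative-spacing and relative-speed contributions are PSD. Then $W$, being the sum of a PD matrix and two PSD matrices, is PD.

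To keep the bookkeeping clean, I would first reorder the variables. Let $U := (u(k)^T, u(k+1)^T, \ldots, u(k+p-1)^T)^T \in \mathbb{R}^{np}$ denote the time-stacked control, which is related to the vehicle-stacked $\mathbf{u} = (\mathbf{u}_1,\ldots,\mathbf{u}_n)$ by a fixed permutation matrix $P$, i.e., $U = P\mathbf{u}$. Congruence by a permutation preserves positive definiteness, so it suffices to prove that the Hessian of $J$ expressed in $U$, call it $\widetilde W$, is PD.

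For the ride comfort term, the corresponding piece of $\widetilde W$ is the block-diagonal matrix
$$\widetilde W_1 \; := \; \tau^2\,\mathrm{blkdiag}\bigl(S_n^{-T} Q_{w,1} S_n^{-1},\,\ldots,\,S_n^{-T} Q_{w,p} S_n^{-1}\bigr).$$
Since $S_n$ in (\ref{eqn:matrix_S_n}) is lower triangular with unit diagonal and hence invertible, and each $Q_{w,s}$ is PD, each diagonal block $S_n^{-T} Q_{w,s} S_n^{-1}$ is PD by congruence; thus $\widetilde W_1$ is PD. For the relative spacing and speed terms, I would use the identities stated just before (\ref{eq:Multi_user}), namely $z'(k+s) = z'(k) + \tau\sum_{j=0}^{s-1} w(k+j)$ and $z(k+s) = z(k) + s\tau z'(k) + \tau^2\sum_{j=0}^{s-1}\tfrac{2(s-j)-1}{2}w(k+j)$, together with $w(k+j) = -S_n^{-1}u(k+j) + u_0(k)\cdot\mathbf{1}$, to write each $z(k+s) = A_s U + b_s$ and $z'(k+s) = B_s U + b'_s$ affinely in $U$. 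Their contribution to $\widetilde W$ is then
$$\widetilde W_2 \; := \; \sum_{s=1}^{p}\bigl(A_s^T Q_{z,s} A_s + B_s^T Q_{z',s} B_s\bigr),$$
which is PSD since each $Q_{z,s}, Q_{z',s}$ is PSD. Combining, $\widetilde W = \widetilde W_1 + \widetilde W_2$ is PD, and conjugating back by $P$ gives $W = P^T\widetilde W P \succ 0$.

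I do not expect any real obstacle: the argument is purely algebraic and uses only invertibility of $S_n$, PD-ness of $Q_{w,s}$, and the fact that the relative spacing and speed enter $J$ through affine functions of $U$, so no quantitative spectral estimate is required. The one piece of bookkeeping, writing down $A_s$ and $B_s$ explicitly, is routine and only needed to verify the form of the Hessian, not its sign.
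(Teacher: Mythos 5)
Your proof is correct and rests on the same essential observation as the paper's: the quadratic part of $J$ coming from the ride-comfort term is positive definite (by congruence of the PD matrices $Q_{w,s}$ with the invertible $S_n^{-1}$), while the relative-spacing and relative-speed terms, being PSD quadratic forms in affine functions of the control, contribute only PSD pieces. The only difference is presentational: the paper extracts the quadratic form via the scaling identity $\frac{1}{2}\ubld^T W \ubld = \lim_{\lambda\to\infty} J(\lambda\ubld)/\lambda^2$ and bounds $J(\lambda\ubld)$ below by the ride-comfort term, whereas you assemble the Hessian decomposition $\widetilde W = \widetilde W_1 + \widetilde W_2$ explicitly, which requires the (routine) bookkeeping of $A_s$ and $B_s$ that the paper's limit argument avoids.
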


\begin{proof}
  Let $\ubld$ be an arbitrary nonzero vector in $\mathbb R^{np}$. Since $J(\cdot)$ is quadratic, we have $\frac{1}{2} \ubld^T W \ubld = \lim_{\lambda \rightarrow \infty} \frac{ J(\lambda \ubld) }{\lambda^2}$. In view of the equivalent formulation of $J(\cdot)$ given by (\ref{eq:MPC}), we deduce that for any $\lambda>0$, $J(\lambda \ubld) = J(  \lambda u(k), \ldots, \lambda u(k+p-1) ) \ge  \frac{\lambda^2}{2} \sum^p_{s=1}  \tau^2  u^T(k+s-1) S^{-T}_n Q_{w, s} S^{-1}_n u(k+s-1) >0$, where the first inequality follows from the fact that $Q_{z, s}$ and $Q_{z', s}$ are PSD, and the second inequality holds because $Q_{w, s}$, and thus $S^{-T}_n Q_{w, s} S^{-1}_n$,  are PD. Therefore,  $\frac{J(\lambda \ubld)}{\lambda^2} \ge \frac{1}{2} \sum^p_{s=1}  \tau^2  u^T(k+s-1) S^{-1}_n Q_{w, s} S^{-1}_n u(k+s-1) >0$, leading to $\frac{1}{2} \ubld^T W \ubld \ge \frac{1}{2} \sum^p_{s=1}  \tau^2  u^T(k+s-1) S^{-1}_n Q_{w, s} S^{-1}_n u(k+s-1) >0$. Hence, $W$ is PD.
\end{proof}

To establish the closed form expressions of the matrix $W$ and the vector $c$ in (\ref{eq:Multi_user}), we define the following matrices for any $i, j \in \{1, \ldots, p\}$:
%
%%\begin{linenomath}
\[
 V_{i, j}  \, := \, S^{-T}_n \left[\sum^p_{s=\max(i, j)} \Big( \frac{\tau^4}{4}[2(s-i)+1]\cdot [2 (s-j)+1] Q_{z, s} + \tau^2 Q_{z', s} \Big) \right] S^{-1}_n \in \mathbb R^{n\times n}.  %%%\quad \forall \ s=1, \ldots, p,
\]
%%%\end{linenomath}
Clearly, $V_{i, j}=V_{j, i}$ for any $i, j$.
Moreover, let $\wt Q_{w, s} := S^{-T}_n Q_{w, s} S^{-1}_n $ for $s=1, \ldots, p$.
%
%, where $S^{-1}_n$ is given in (\ref{eqn:S_inverse}).
%
Hence, the symmetric matrix $W$  is given by $W = E^T V E$, where
%%\begin{linenomath}
\begin{equation} \label{eqn:V_matrix}
   V \, = \,  \begin{bmatrix} V_{1,1} +\tau^2 \wt Q_{w, 1}  & V_{1, 2} & V_{1,3} & \cdots & \cdots & V_{1, p} \\ V_{2,1} & V_{2,2} +\tau^2 \wt Q_{w, 2}  & V_{2, 3} & \cdots & \cdots & V_{2, p} \\ \cdots &  & \cdots &  & \cdots & \\ \cdots  &  & \cdots &  & \cdots & \\ V_{p,1} & V_{p, 2} & V_{p, 3} & \cdots & \cdots & V_{p, p} +\tau^2 \wt Q_{w, p}  \end{bmatrix} \in \mathbb R^{np \times np},
\end{equation}
%%\end{linenomath}
and
 $E \in \mathbb R^{np \times np}$ is the permutation matrix satisfying
%%\begin{linenomath}
\[
    \begin{bmatrix} u(k) \\ u(k+1) \\ \vdots \\ u(k+p-1) \end{bmatrix} \, = \, E \begin{bmatrix} \mathbf u_1 \\ \mathbf u_2 \\ \vdots \\ \mathbf u_n \end{bmatrix}.
\]
%%\end{linenomath}
Specifically, the $(i, j)$-entry of the matrix $E$ is given by
%%\begin{linenomath}
\begin{equation} \label{eqn:E_matrix}
  E_{i, j} = \left\{ \begin{array}{ll} 1 &  \mbox{if} \ \  i=n\cdot k+s, \ j=p\cdot (s-1)+k+1, \quad \mbox{for} \ \ k=0, \ldots, p-1, \ s=1,\ldots, n; \\ 0, & \mbox{otherwise}. \end{array} \right.
\end{equation}
%%\end{linenomath}
In particular, when $p=1$, $E=I_n$.

For  a fixed $k \in \mathbb Z_+$, we also define for each $s=1, \ldots, p$,
%%\begin{linenomath}
\begin{align*}
 d_s(k) \, := \, z(k) + s \tau z'(k) +  \tau^2 \sum^{s-1}_{j=0} \frac{2(s-j)-1}{2} S^{-1}_n  \cdot \mathbf 1 \cdot u_0(k), \quad
 f_s(k) & \, := \, z'(k) + \tau  \sum^{s-1}_{j=0} S^{-1}_n  \cdot \mathbf 1 \cdot u_0(k).
\end{align*}
%%%\end{linenomath}
In light of $S^{-1}_n$ given by (\ref{eqn:matrix_S_n}), we have $S^{-1}_n  \cdot \mathbf 1 = \mathbf e_1$. Therefore, we obtain
%%\begin{linenomath}
\begin{equation} \label{eqn:d_s_f_s}
 d_s(k)= z(k) + s \tau z'(k) +  \frac{\tau^2}{2} s^2 \mathbf e_1 u_0(k), \qquad f_s(k) = z'(k) + \tau s \mathbf e_1 u_0(k).
\end{equation}
%%\end{linenomath}
In view of
%%\begin{linenomath}
\[
 z(k+s) = d_s(k) -\tau^2 \sum^{s-1}_{j=0} \frac{2(s-j)-1}{2} S^{-1}_n u(k+j), \qquad z'(k+s) = f_s(k) - \tau\sum^{s-1}_{j=0} S^{-1}_n u(k+j),
\]
%%\end{linenomath}
 the linear terms in the objective function $J$ are given by
%%\begin{linenomath}
\begin{equation} \label{eqn:lin_term_in_J}
   -\sum^p_{i=1} \left( \, \sum^p_{s=i} \Big[ \frac{\tau^2}{2} [2(s-i)+1] d^T_{s}(k) Q_{z, s}  + \tau  f^T_s(k) Q_{z', s} \Big] \, \right)  S^{-1}_n \cdot u(k+i-1).
\end{equation}
%%\end{linenomath}

Using the permutation matrix $E$ given in (\ref{eqn:E_matrix}), we can write the above linear terms as
$
    c^T \mathbf u \, = \, \sum^n_{i=1} c^T_{\Ical_i} \mathbf u_i,
$
where $c_{\Ical_i}$ is the subvector of $c$ corresponding to $\mathbf u_i$. Since $Q_{z, s}$ and $Q_{z', s}$ are diagonal, it is easy to obtain the following lemma via $d_s(k), f_s(k)$ in (\ref{eqn:d_s_f_s}) and the structure of $S^{-1}_n$ given by (\ref{eqn:matrix_S_n}):

\begin{lemma} \label{lem:local_couple_lin_term}
  Consider the vector $c=(c_{\Ical_1}, \ldots, c_{\Ical_n})$ given above. Then for each $i=1, \ldots, n$, the subvector $c_{\Ical_i}$ depends only on $z_i(k), z'_i(k), z_{i+1}(k), z'_{i+1}(k)$'s for $i=1, \ldots, n-1$, and $c_{\Ical_n}$ depends only on $z_n(k), z'_n(k)$.
  Further, only $c_{\Ical_1}$ depends on $u_0(k)$.
  %%%%%Furthermore, the matrix block $W_{\Ical_i \Ical_j}=0$ for any $(i, j) \notin E$.
\end{lemma}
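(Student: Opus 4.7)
The plan is to start from the explicit linear-in-$u$ expression in (\ref{eqn:lin_term_in_J}) and track, coefficient by coefficient, which pieces of data each entry of $c$ actually depends on. Writing the linear terms as $L = \sum_{i=1}^p \alpha_i^T S_n^{-1} u(k+i-1)$ with
\[
\alpha_i^T \, := \, -\sum_{s=i}^p \Big[ \tfrac{\tau^2}{2}[2(s-i)+1]\, d_s^T(k) Q_{z,s} + \tau f_s^T(k) Q_{z',s} \Big],
\]
the coefficient of $u_j(k+i-1)$ in $L$ equals $(\alpha_i^T S_n^{-1})_j$. Since $u_j(k+i-1)$ is precisely the $i$-th entry of $\mathbf{u}_j$ and since $c$ is defined by $c^T \mathbf{u} = L$, I identify $(c_{\Ical_j})_i = (\alpha_i^T S_n^{-1})_j$ for all $i=1,\dots,p$ and $j=1,\dots,n$. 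The lemma is then reduced to understanding the dependence of this single scalar on $z(k), z'(k), u_0(k)$.

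Next I exploit the diagonality of each $Q_{z,s}$ and $Q_{z',s}$ from Assumption $\bf A.2$: the row vector $d_s^T(k) Q_{z,s}$ has $j$-th entry equal to $(d_s(k))_j (Q_{z,s})_{jj}$, and likewise for $f_s^T(k) Q_{z',s}$. Combining this with the closed forms in (\ref{eqn:d_s_f_s}) (noting $S_n^{-1} \mathbf{1} = \mathbf{e}_1$, so that $u_0(k)$ contributes only to the first coordinates of $d_s$ and $f_s$), one sees that the $j$-th entry $\alpha_{i,j}$ of $\alpha_i$ depends only on $z_j(k)$ and $z'_j(k)$ when $j\ge 2$, while $\alpha_{i,1}$ depends on $z_1(k), z'_1(k)$ and, additionally, on $u_0(k)$ through the $\tfrac{\tau^2}{2}s^2\mathbf{e}_1 u_0(k)$ and $\tau s\mathbf{e}_1 u_0(k)$ summands. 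Thus the row $\alpha_i^T$ is already coordinatewise local in the vehicle index before any multiplication by $S_n^{-1}$ is performed.

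Finally I use the lower-bidiagonal form of $S_n^{-1}$ from (\ref{eqn:matrix_S_n}), whose column $j$ carries a $+1$ in row $j$ and a $-1$ in row $j+1$ for $j<n$, to obtain
\[
(\alpha_i^T S_n^{-1})_j \, = \, \alpha_{i,j} - \alpha_{i,j+1}, \quad j=1,\dots,n-1, \qquad (\alpha_i^T S_n^{-1})_n \, = \, \alpha_{i,n}.
\]
Substituting the dependence result from the previous step, $(c_{\Ical_j})_i$ depends only on $z_j(k), z'_j(k), z_{j+1}(k), z'_{j+1}(k)$ for $j=1,\dots,n-1$, and only on $z_n(k), z'_n(k)$ for $j=n$. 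Moreover $u_0(k)$ enters only through $\alpha_{i,1}$, which contributes only to $(\alpha_i^T S_n^{-1})_1$, hence only to $c_{\Ical_1}$. Ranging over $i=1,\dots,p$ yields the conclusion.

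No serious obstacle is anticipated; the argument is essentially bookkeeping. The one conceptual point worth emphasising is that the nearest-neighbour locality of the $c_{\Ical_j}$'s results from two \emph{independent} structural facts applied in sequence: the diagonality of $Q_{z,s}$ and $Q_{z',s}$ forces $\alpha_i$ to be coordinatewise local in the vehicle index, and the lower-bidiagonal form of $S_n^{-1}$ then introduces coupling across exactly two consecutive indices. Either ingredient alone would be insufficient, which clarifies the role of Assumption $\bf A.2$ in the subsequent fully distributed decomposition.
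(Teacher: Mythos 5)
Your proposal is correct and follows exactly the route the paper intends: the paper gives no detailed proof, stating only that the lemma is ``easy to obtain'' from the diagonality of $Q_{z,s}, Q_{z',s}$, the closed forms of $d_s(k), f_s(k)$ in (\ref{eqn:d_s_f_s}), and the bidiagonal structure of $S_n^{-1}$, which are precisely the three ingredients you assemble. Your identification $(c_{\Ical_j})_i = \alpha_{i,j}-\alpha_{i,j+1}$ (with $(c_{\Ical_n})_i=\alpha_{i,n}$) is the bookkeeping the paper leaves implicit, and it is carried out correctly.
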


The above lemma shows that each $c_{\Ical_i}$ only depends on the information of the adjacent vehicles of vehicle $i$, and thus can be easily established from any vehicle network. This property is important for developing fully distributed schemes to be shown in Section~\ref{subsect:ist_scheme_linear}.

To find the vector form of the  safety constraint, we note that for $s=1, \ldots, p$,
%%\begin{linenomath}
\[
  x_i(k+s) \, = \, x_i(k) + s \tau v_i(k) + \tau^2 \sum^{s-1}_{j=0} \frac{2(s-j)-1}{2} u_i(k+j), \quad  v_i(k+s) = v_i(k) + \tau \sum^{s-1}_{j=0} u_i(k+j).
\]
%%\end{linenomath}
%
%
The  safety distance constraint for the $i$-th vehicle at time $k$ is given by: for $s=1, \ldots, p$,
%%\begin{linenomath}
\begin{align}
   0 & \ge - \Big [ \, x_{i-1}(k)   + s \tau v_{i-1}(k) - (x_i(k) + s \tau v_i(k)) \, \Big ]  - \tau^2 \sum^{s-1}_{j=0} \frac{2(s-j)-1}{2} [u_{i-1}(k+j) - u_i(k+j)]  \notag \\
  & + \, L + r v_i(k) + r \tau  \sum^{s-1}_{j=0} u_i(k+j) \label{eqn:H_function_safety_distance} \\
   & - \frac{1}{2 a_{\min}} \Big[ \tau^2\big ( \sum^{s-1}_{j=0} u_i(k+j) \big)^2 + 2 \tau (v_i(k) - v_{\min}) \sum^{s-1}_{j=0} u_i(k+j) + (v_i(k) - v_{\min})^2 \Big] := (H_i(\mathbf u_{i-1},  \notag \mathbf u_i))_s,
\end{align}
%%\end{linenomath}
where  $(H_i(\cdot, \cdot))_s$ is a convex quadratic function for each $s=1, \ldots, p$.
Hence, the set $\mathcal Z_i:=\{ \mathbf u \in \mathbb R^{np} \, | \, (H_i( \mathbf u_{i-1}, \mathbf u_i))_s \le 0, \, \forall \, i=1, \ldots, p \}$ is  closed and convex. The problem (\ref{eq:Multi_user}) becomes $\min_{\mathbf u} J(\mathbf u)$ subject to $\mathbf u_i \in \mathcal X_i$ and $\mathbf u \in \mathcal Z_i$ for all $i=1, \ldots, n$, which is a convex quadratically constrained quadratic program (QCQP) and can be solved via a second-order cone program or a semi-definite program  in the centralized manner.

\begin{remark}  \label{remark:local_coupled_constraint} \rm
 The above results show that each $\mathcal X_i$'s are decoupled from the other vehicles, whereas
 the constraint function $H_i$ for  vehicle $i$ is locally coupled with its neighboring vehicles. Specifically, $H_i$  depends not only on $\mathbf u_i$ but also on $\mathbf u_{i-1}$ of vehicle $i-1$, which can exchange information with vehicle $i$. We will explore this local coupling property to develop fully distributed schemes
 for solving (\ref{eq:Multi_user}).
\end{remark}

%%%%%%%%%%%%%%%%%%%%%%%%%%%%%%%%%%%%%%%%%%%%%%%%%%%%%%%%%%%%%%%%%%%%%%%%%%%%%%%%
%------------------------------------------------------------------------------------
%
\section{Fully Distributed Algorithms for Constrained Optimization in MPC} \label{sect:dist_scheme}

We develop  fully distributed algorithms for solving the underlying optimization problem given by (\ref{eq:Multi_user}) at each time $k$ using the techniques of locally coupled convex optimization and operator splitting methods.
%

%------------------------------------------------------------------
%
\subsection{Brief Overview of Locally Coupled Convex Optimization} \label{subsect:locally_coupled_opt_formulation}

One of major techniques for developing fully distributed schemes for the underlying optimization problem given by (\ref{eq:Multi_user}) is to formulate it as a locally coupled convex  optimization problem \cite{HuXiaoLiu_CDC18}.
To be self-contained, we briefly describe its formulation.

Consider a multi-agent network of $n$ agents whose communication is characterized by a connected and undirect graph $\mathcal G(\mathcal V, \mathcal E)$, where $\mathcal V=\{1, \ldots, n \}$ is the set of agents, and $\mathcal E$ denotes the set of edges. For $i \in \mathcal V$, let $\mathcal N_i$ be the set of neighbors of agent $i$, i.e., $\mathcal N_i =\{ j \, | \, (i, j) \in \mathcal E\}$. Let $\{ \Ical_1, \ldots, \Ical_n\}$ be a disjoint union of the index set $\{1, \ldots, N \}$. Hence, for any $x\in \mathbb R^N$, $(x_{\Ical_i} )^n_{i=1}$ forms a partition of $x$. We call $x_{\Ical_i}$ a local variable of each agent $i$.
For each $i$, define $\wh x_i :=\big(x_{\Ical_i}, (x_{\Ical_j})_{j \in \mathcal N_i} \big) \in \mathbb R^{n_i}$. Hence, for each $i$, $\wh x_i$ contains the local variable $x_{\Ical_i}$ and the variables from its neighboring agents (or locally coupled variables). Consider the convex optimization problem
\[
 (P): \qquad \min_{x \in \mathbb R^N} J(x), \qquad \quad \mbox{ where } \quad J(x) \, := \, \sum^n_{i=1} J_i(\wh x_i),
\]
where $J_i:\mathbb R^{n_i} \rightarrow \mathbb R \cup \{+\infty\}$ is an extended-valued, proper, and lower semicontinuous convex function for each $i$. Clearly, each $J_i$ is locally coupled such that the problem $(P)$ bears the name of ``locally coupled convex optimization''.
Although the problem $(P)$ is seemingly unconstrained, it does include constrained convex optimization since $J_i$ may contain the indicator function of a closed convex set. To impose the locally coupled convex constraint explicitly, the problem $(P)$ can be equivalently written as:
\begin{equation} \label{eqn:local_coupled_opt_formulation}
  (P'): \qquad \min_{x \in \mathbb R^N} \sum^n_{i=1} \wh J_i(\wh x_i), \quad \mbox{subject to } \quad \wh x_i \in \mathcal C_i, \quad \forall \ i=1, \ldots, n,
\end{equation}
where for each $i$, $\wh J_i:\mathbb R^{n_i} \rightarrow \mathbb R$ is a real-valued convex function, and $\mathcal C_i \subseteq \mathbb R^{n_i}$ is a closed convex set.

By introducing copies of the locally coupled variables for each agent and imposing certain consensus constraints on these copies, the paper \cite{HuXiaoLiu_CDC18} formulates the problem $(P')$ (or equivalently $(P)$) as a separable consensus convex optimization problems. Under suitable assumptions, Douglas-Rachford and other operator splitting based distributed schemes are developed; details can be found in \cite{HuXiaoLiu_CDC18}.

%--------------------------------------------------------------------------
%
\subsection{Decomposition of a Strongly Convex Quadratic Objective Function}

The framework of locally coupled optimization problems requires that both an objective function and constraints are expressed in a locally coupled manner. Especially, the central objective function in (\ref{eqn:local_coupled_opt_formulation}) is expected to be written as the sum of a few locally coupled functions preserving certain desired properties, e.g., the (strong) convexity if the central objective function is so, where local coupling satisfies network topology constraints.
While the constraints of the problem (\ref{eq:Multi_user}) have been shown to be locally coupled (cf. Remark~\ref{remark:local_coupled_constraint}), the central strongly convex quadratic objective function,
particularly its quadratic term $\frac{1}{2} \mathbf u^T W \mathbf u$, is highly coupled
 and thus need to be decomposed into the sum of locally coupled (strongly) convex quadratic functions, where the local coupling should satisfy the network topology constraint. In this subsection, we address this decomposition problem under a mild assumption on network topology. Specifically, our results yield a decomposition into convex and strongly convex functions.

We start from a slightly general setting. Let $\boldsymbol\lambda := (\lambda_1, \ldots, \lambda_n) \in \mathbb R^n$ and $\Lambda= \diag( \boldsymbol\lambda)=\mbox{diag}(\lambda_1, \ldots, \lambda_n)$ be a diagonal matrix,  i.e., $\boldsymbol\lambda$ is the vector representation of the diagonal entries of $\Lambda$. Therefore,
the following matrix is tridiagonal:
%%\begin{linenomath}
\begin{equation} \label{eqn:triadigonal_mat_product}
  S^{-T}_n \Lambda S^{-1}_n = \begin{bmatrix} \lambda_1 + \lambda_2 & -\lambda_2 &  & &  \\
    -\lambda_2 & \lambda_2+\lambda_3 &  -\lambda_3 & &  \\
     %%% & -\lambda_3 & \lambda_3+ \lambda_4 & -\lambda_4 & & &  \\
      &   \ddots & \ddots & \ddots &   \\
      %% & &   \ddots & \ddots & \ddots & \\
       & & -\lambda_{n-1} & \lambda_{n-1}+\lambda_n & -\lambda_n  \\
       & & & - \lambda_n & \lambda_n
    \end{bmatrix}.
\end{equation}
%%\end{linenomath}

Consider a general $p \in \mathbb N$. Let $\Theta$ be a symmetric block diagonal matrix given by
%%\begin{linenomath}
\[
   \Theta =  \begin{bmatrix} \Theta_{1,1}  & \Theta_{1, 2} &  \cdots & \cdots & \Theta_{1, p} \\ \Theta_{2,1} & \Theta_{2,2}  &  \cdots & \cdots & \Theta_{2, p} \\ \cdots &  & \cdots &  & \cdots & \\ \cdots  &  & \cdots &  & \cdots & \\ \Theta_{p,1} & \Theta_{p, 2}  & \cdots & \cdots & \Theta_{p, p}  \end{bmatrix} \in \mathbb R^{np \times np},
\]
%%\end{linenomath}
where $\Theta_{i, j} = \diag(\boldsymbol\theta_{i,j}) \in \mathbb R^{n\times n}$ is diagonal for some $\boldsymbol\theta_{i,j} \in \mathbb R^n$, and $\thetabf_{i, j} = \thetabf_{j, i}$ for all $i, j=1, \ldots, p$.
Let $(\thetabf_{i, j})_k$ denote the $k$th entry of the vector $\thetabf_{i, j}$.
For each $i=1, \ldots, n$, define the matrix
%%\begin{linenomath}
\begin{equation} \label{eqn:U_i_matrix}
   U_i \, := \, \begin{bmatrix}  (\thetabf_{1,1})_{i}  & (\thetabf_{1,2})_{i}  & \cdots & (\thetabf_{1,p})_{i}  \\
   (\thetabf_{2,1})_{i} & (\thetabf_{2,2})_{i} & \cdots & (\thetabf_{2,p})_{i} \\
   \cdots  &   \cdots   & \cdots & \\
   (\thetabf_{p,1})_{i} & (\thetabf_{p,2})_{i}  & \cdots & (\thetabf_{p,p})_{i}
    \end{bmatrix} \in \mathbb R^{p\times p}.
\end{equation}
%%\end{linenomath}
It can be shown that $\Theta =  E^T \diag(U_1, \ldots, U_n) E$, where $E$ is the permutation matrix given by (\ref{eqn:E_matrix}).
Hence, $\Theta$ is PD (resp. PSD) if and only if each $U_i$ is PD (resp. PSD).

Let
%%\begin{linenomath}
\begin{eqnarray*}
  V & = & \underbrace{\begin{bmatrix} S^{-T}_n &  & &  & \\ & S^{-T}_n &  & &   \\ & & \ddots & & \\  & & & S^{-T}_n \end{bmatrix} }_{:=\mathbf S^{-T} } \Theta \underbrace{\begin{bmatrix} S^{-1}_n &  & &  & \\ & S^{-1}_n &  & &   \\ & & \ddots & & \\  & & & S^{-1}_n \end{bmatrix}}_{:=\mathbf S^{-1}} = \begin{bmatrix} V_{1, 1} & V_{1, 2} & \cdots & V_{1, p} \\
  V_{2, 1} & V_{2, 2} & \cdots & V_{2, p} \\
  \cdots &  & \cdots & \cdots \\
  V_{p, 1} & V_{p, 2} & \cdots & V_{p, p} \\
   \end{bmatrix},
\end{eqnarray*}
%%%\end{linenomath}
where $V_{i, j}  := S^{-T}_n \Theta_{i, j} S^{-1}_n$ is  symmetric and tridiagonal. Letting $E$ be the permutation matrix given by (\ref{eqn:E_matrix}),
a straightforward computation shows that $E^T V E$ is a symmetric block tridiagonal matrix given by
%%\begin{linenomath}
\[
  W = E^T V E = \begin{bmatrix} W_{1,1} & W_{1,2} &  & &  \\
     W_{2, 1} & W_{2, 2} &  W_{2,3} & &  \\
     %%% & W_{3,2} & W_{3,3} & W_{3, 4} &  & \\
      &   \ddots & \ddots & \ddots &  \\
     %% & & &   \ddots & \ddots & \ddots & \\
       & & W_{n-1, n-2} & W_{n-1, n-1} & W_{n-1, n} \\
      & & & W_{n, n-1} & W_{n, n}
    \end{bmatrix} \in \mathbb R^{np \times np},
\]
%%%\end{linenomath}
where each $W_{i, j} \in \mathbb R^{p \times p}$ is symmetric and $W_{i, j} = W_{j, i}$. Furthermore, for each $i=1, \ldots, n$ and $j \in \{i, i+1\}$,
%%\begin{linenomath}
\[
 W_{i, j} = \begin{bmatrix}  (V_{1,1})_{i,j} & (V_{1,2})_{i,j} & \cdots & (V_{1,p})_{i,j} \\
   (V_{2,1})_{i,j} & (V_{2,2})_{i,j} & \cdots & (V_{2,p})_{i,j} \\
   \cdots  &   \cdots   & \cdots & \\
   (V_{p,1})_{i,j} & (V_{p,2})_{i,j} & \cdots & (V_{p,p})_{i,j}
    \end{bmatrix} \in \mathbb R^{p\times p},
\]
%%\end{linenomath}
where $(V_{r, s})_{i, j}$ denotes the $(i, j)$-entry of the block $V_{r, s}$.
In view of $V_{i, j}= S^{-T}_n \Theta_{i, j} S^{-1}_n$ and (\ref{eqn:triadigonal_mat_product}), we have
that $W_{i, i}=U_i + U_{i+1}$ and $W_{i, i+1}=-U_{i+1}$ for $i=1, \ldots, n-1$, and $W_{n, n}=U_n$.
Moreover, since $W=E^T V E = E^T \mathbf S^{-T} \Theta \, \mathbf S^{-1} E$, $W$ is PD (resp. PSD) if and only if $\Theta$ is PD (resp. PSD), which is also equivalent to that each $U_i$ is PD (resp. PSD); see the comment after (\ref{eqn:U_i_matrix}).

In what follows, we consider PSD (resp. PD) matrix decomposition for a PSD (resp. PD) $W$ generated by $\thetabf_{i, j} \in \mathbb R^n_+$ for $i=1, \ldots, n$ and $j \ge i$. The goal of this decomposition is to construct PSD  matrices $\wt W^s \in \mathbb R^{np \times np}$ for $s=1, \ldots, n$ such that the following conditions hold:
\begin{itemize}
   \item [(i)]
  %% \begin{linenomath}
   \[
      \wt W^1 \, = \, \begin{bmatrix} (\wt W^1)_{1,1} & (\wt W^1)_{1,2} &  & &  \\
     (\wt W^1)_{2, 1} & (\wt W^1)_{2, 2} &  & &  \\
     & &   0 & \cdots & 0 &  \\
      & &  \vdots  &  \cdots & \vdots & \\
     &  &  0 & \cdots & 0
    \end{bmatrix}, \quad
    \wt W^n \, = \, \begin{bmatrix}
       0 & \cdots & 0 &  & & \\
      \vdots  &  \cdots & \vdots & & & \\
       0 & \cdots & 0 & & \\
    &  & & (\wt W^n)_{n-1,n-1} & (\wt W^n)_{n-1,n}   \\
    &  & & (\wt W^n)_{n, n-1} & (\wt W^n)_{n, n}  \\
    \end{bmatrix};
   \]
   %%%\end{linenomath}

   \item [(ii)] for each $s=2, \ldots, n-1$,
  %% \begin{linenomath}
     \[
        \wt W^s \, = \, \begin{bmatrix}
         {\mathbf 0}_{(i-2)p\times (i-2)p}  & & & & \\
      &     (\wt W^s)_{s-1,s-1} & (\wt W^s)_{s-1,s} & 0 & &  \\
      &   (\wt W^s)_{s, s-1} & (\wt W^s)_{s, s} & (\wt W^s)_{s, s+1} & &  \\
      &   0 & (\wt W^s)_{s+1, s} & (\wt W^s)_{s+1, s+1} &  & &  \\
     & & &  &  0 & \cdots & 0 &  \\
     &  & & & \vdots  &  \cdots & \vdots & \\
     & &  &  & 0 & \cdots & 0
    \end{bmatrix}; \ \ \ \mbox{and}
     \]
    %% \end{linenomath}
   \item [(iii)] $W = \sum^n_{s=1} \wt W^s$.
 \end{itemize}
For notational simplicity, let $\wh W^s$ denote the possibly nonzero block in each $\wt W^s$. Specifically,
%%\begin{linenomath}
\[
  \wh W^1 := \begin{bmatrix} (\wt W^1)_{1,1} & (\wt W^1)_{1,2} \\ (\wt W^1)_{2, 1} & (\wt W^1)_{2, 2} \end{bmatrix} \in \mathbb R^{2p \times 2p}, \qquad \wh W^n :=  \begin{bmatrix} (\wt W^n)_{n-1,n-1} & (\wt W^n)_{n-1,n} \\ (\wt W^n)_{n, n-1} & (\wt W^n)_{n, n} \end{bmatrix} \in \mathbb R^{2p \times 2p},
\]
%%\end{linenomath}
and for each $s=2, \ldots, n-1$,
%%\begin{linenomath}
\[
  \wh W^s := \begin{bmatrix} (\wt W^s)_{s-1,s-1} & (\wt W^s)_{s-1,s} & 0 \\  (\wt W^s)_{s, s-1} & (\wt W^s)_{s, s} & (\wt W^s)_{s, s+1} \\  0 & (\wt W^s)_{s+1, s} & (\wt W^s)_{s+1, s+1} \end{bmatrix} \in \mathbb R^{3 p\times 3p}.
\]
%%\end{linenomath}
When $W$ is PD, we also want  each $\wh W^s$ in the above decomposition to be PD.

\begin{proposition}
 Let $W$ be a PSD matrix generated by $\thetabf_{i, j} \in \mathbb R^n_+$ for $i=1, \ldots, n$. Then there exist PSD matrices $\wt W^s, \ s=1, \ldots, n$ satisfying the above conditions.
 Moreover, suppose $W$ is PD. Then there exist PD matrices $\wh W^s, \ s=1, \ldots, n$ such that their corresponding $\wt W^s$'s satisfy the above conditions.
\end{proposition}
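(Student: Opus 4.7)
The plan is explicit construction: for the PSD case, an ``edge Laplacian plus single node'' decomposition mirroring the graph-Laplacian structure of $W$, and for the PD case, a small zero-sum perturbation that is strictly positive on the nullspaces of the rank-deficient pieces.

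\textbf{PSD case.} Using the tridiagonal identity (\ref{eqn:triadigonal_mat_product}) together with the block relations $W_{i,i}=U_i+U_{i+1}$ (with $U_{n+1}:=0$) and $W_{i,i+1}=-U_{i+1}$, I would first establish the telescoping identity $W=N_1+\sum_{s=1}^{n-1} L_s$, where $N_1$ places $U_1$ at block position $(1,1)$ and $L_s$ places the rank-$p$ block $\begin{bmatrix} I \\ -I\end{bmatrix} U_{s+1}\begin{bmatrix}I & -I\end{bmatrix}$ at positions $\{s,s+1\}$. Each $L_s$ and $N_1$ is manifestly PSD. Fixing parameters $\alpha_s\in(0,1)$, I would then assemble
\[
 \wt W^1:=N_1+\alpha_1 L_1,\quad \wt W^s:=(1-\alpha_{s-1})L_{s-1}+\alpha_s L_s\ \ (2\le s\le n-1),\quad \wt W^n:=(1-\alpha_{n-1})L_{n-1}.
\]
Each $\wt W^s$ is a sum of PSD matrices, hence PSD; the supports match the prescribed sparsity; and the sum reconstructs $W$.

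\textbf{PD case.} If $W$ is PD then each $U_i$ is PD (by the equivalence noted after (\ref{eqn:U_i_matrix})), and a Schur-complement calculation on $\wh W^1=\begin{bmatrix} U_1+\alpha_1 U_2 & -\alpha_1 U_2\\ -\alpha_1 U_2 & \alpha_1 U_2\end{bmatrix}$ gives Schur complement $U_1\succ 0$, so $\wh W^1$ is already PD. For $s\ge 2$ however, $\wh W^s$ is only PSD: each edge Laplacian $L_s$ has rank $p$, so $\ker\wh W^s$ is the ``constant block'' subspace $\{(x,x,x):x\in\mathbb R^p\}$ (intermediate $s$) or $\{(x,x)\}$ ($s=n$). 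To upgrade to PD, I would perturb $\wh W^s\mapsto \wh W^s+\epsilon P^s$ for small $\epsilon>0$, with matrices $P^s$ chosen so that (i) $\sum_s \wt P^s=0$, preserving $\sum_s \wt W^s = W$, and (ii) $P^s$ is strictly positive on $\ker\wh W^s$ for each $s\ge 2$, which forces $\wh W^s+\epsilon P^s$ to be PD for all sufficiently small $\epsilon>0$.

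I would construct such $P^s$ in block-diagonal form $P^s=\diag(\lambda^s_1 I_p,\lambda^s_2 I_p,\lambda^s_3 I_p)$ (with the obvious two-block analogues at $s=1,n$): condition (i) reduces to the scalar system $\lambda^{k-1}_3+\lambda^k_2+\lambda^{k+1}_1=0$ at each diagonal position $k$ (with boundary adjustments at $k=1,2,n-1,n$), while (ii) reduces to $\lambda^s_1+\lambda^s_2+\lambda^s_3>0$ for intermediate $s$ and $\lambda^n_1+\lambda^n_2>0$. A concrete solution is $\lambda^s_1:=-s$, $\lambda^s_3:=s-C$ for $s=2,\ldots,n-1$ (with a free constant $C$): the intermediate equations then force $\lambda^s_2=C+2$ and the kernel-positivity becomes $-s+(C+2)+(s-C)=2>0$; the boundary equations determine $\lambda^1_1,\lambda^1_2,\lambda^n_1,\lambda^n_2$, and the final positivity $\lambda^n_1+\lambda^n_2=C-2n+1>0$ is secured by taking $C>2n-1$.

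\textbf{Main obstacle.} The delicate point is the apparent tension between $\sum_s \wt P^s=0$ and each $P^s$ being strictly positive on its nullspace, which at first glance looks impossible since summing strictly positive quantities cannot vanish. The resolution is that $\wh W^1$ is \emph{already} strictly PD, so $P^1$ carries no kernel-positivity requirement and is free to contribute negative mass that balances the positive kernel contributions from $P^2,\ldots,P^n$; this single ``free slot'' is precisely what makes the linear system consistent, as the explicit parametrization confirms. Once this is in place, the remaining verifications --- that $\wh W^s+\epsilon P^s$ stays PD for all $s$ and all sufficiently small $\epsilon>0$ --- follow from the continuity of positive definiteness and the strict positivity of $\wh W^s$ off its kernel.
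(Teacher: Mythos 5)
Your proof is correct, and while the PSD half lands on essentially the paper's construction, the PD half takes a genuinely different route. For the PSD case the paper's choice is exactly your telescoping identity with all $\alpha_s=0$: it puts $U_1$ alone in $\wt W^1$ and the edge Laplacian $\begin{bmatrix} U_s & -U_s\\ -U_s & U_s\end{bmatrix}$ (your $L_{s-1}$) in $\wt W^s$ for $s\ge 2$, so your $\alpha$-parametrized family is a mild generalization of the same idea. For the PD case the paper instead forms overlapping half-weight pieces $\breve W^s$, arranged so that $\breve W^1$ and $\breve W^2$ both contain $U_1$ and are PD while $\breve W^3,\ldots,\breve W^n$ are only PSD, and then telescopes diagonal shifts down the chain (subtract $\delta_s$ from the trailing diagonal blocks of the $s$-th piece, add it to the leading diagonal blocks of the $(s+1)$-st), choosing each $\delta_s<\lambda_{\min}$ of the current matrix sequentially. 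You stay with the PSD pieces, identify their kernels exactly (the constant-block subspace, which is right since each $U_i$ is PD when $W$ is and $\alpha_s\in(0,1)$), and design a single zero-sum diagonal perturbation that is strictly positive on each kernel; your Schur-complement argument that $\wh W^1$ is PD is correct, and this is precisely the slack that makes the zero-sum system consistent, as you observe. Your explicit $\lambda$'s check out (the generic formula $\lambda^k_2=C+2$ only applies for $3\le k\le n-2$, so $\lambda^2_2$ is a free choice --- taking it to be $C+2$ as well forces $\lambda^1_2=1-C$ and preserves the kernel positivity $=2$ at $s=2$; small $n$ needs the obvious separate bookkeeping). Both arguments are valid and are, at bottom, the same move --- a trace-preserving diagonal reallocation anchored at the one piece that is PD for free; yours buys closed-form perturbation weights and a single uniform $\epsilon$ at the cost of computing kernels, while the paper's cascade avoids kernel computations at the cost of a sequential, step-dependent choice of the $\delta_s$.
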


\begin{proof}
 Let $W$ be generated by $\thetabf_{i, j}$'s such that $W$ is PSD, and let $U_i$'s be defined in (\ref{eqn:U_i_matrix}) corresponding to $\thetabf_{i, j}$'s. Note that each $U_i$ is PSD as $W$ is PSD. Let
 %%\begin{linenomath}
 \[
      \wt W^1 \, = \, \begin{bmatrix} U_1 & 0 &  & &  \\
     0 & 0 &  & &  \\
     & &   0 & \cdots & 0   \\
      & &  \vdots  &  \cdots & \vdots  \\
     &  &  0 & \cdots & 0
    \end{bmatrix}, \qquad
    \wt W^n \, = \, \begin{bmatrix}
       0 & \cdots & 0 &  &  \\
      \vdots  &  \cdots & \vdots & &  \\
       0 & \cdots & 0 & & \\
    &  & & U_n & -U_n   \\
    &  & & -U_n & U_n  \\
    \end{bmatrix},
   \]
  %% \end{linenomath}
   and   for each $s=2, \ldots, n-1$,
  %% \begin{linenomath}
     \[
        \wt W^s \, = \, \begin{bmatrix}
         {\mathbf 0}_{(i-2)p\times (i-2)p}  & & & & \\
         %  0 & \cdots & 0 &   & & & & \\
         %  \vdots  &  \cdots & \vdots & & & & & \\
         %  \underbrace{0 & \cdots & 0}_{(i-2)p-\mbox{copies}} & & & & \\
%
      &     U_s & -U_s & 0 & &  \\
      &   -U_s & U_s & 0 & &  \\
      &   0 & 0 & 0 &  & &  \\
     & & &  &  0 & \cdots & 0 &  \\
     &  & & & \vdots  &  \cdots & \vdots & \\
     & &  &  & 0 & \cdots & 0
    \end{bmatrix}.
     \]
    %%\end{linenomath}
  Since each $U_i$ is PSD, so is $\wt W^s$ for each $s=1, \ldots, n$. Clearly, $W=\sum^n_{s=1} \wt W^s$.

 Now suppose $W$ is PD. Hence, each $U_i$ given by (\ref{eqn:U_i_matrix}) is PD.
Define
%%%\begin{linenomath}
\[
  \breve W^1 := \frac{1}{2} \begin{bmatrix} U_1 + U_2 & -U_2 \\ -U_2 & U_2 \end{bmatrix},  \qquad \breve W^2 := \frac{1}{2} \begin{bmatrix} U_1+U_2 & -U_2 & 0 \\  -U_2 & U_2+U_3 & -U_3 \\  0 & -U_3 & U_3 \end{bmatrix}, \qquad \breve W^n := \frac{1}{2} \begin{bmatrix} U_n & -U_n \\ -U_n & U_n \end{bmatrix},
\]
%%\end{linenomath}
and for each $s=3, \ldots, n-1$,
%%\begin{linenomath}
\[
   \breve W^s := \frac{1}{2} \begin{bmatrix} U_s & -U_s & 0 \\  -U_s & U_s+U_{s+1} & -U_{s+1} \\  0 & -U_{s+1} & U_{s+1} \end{bmatrix} \in \mathbb R^{3 p\times 3p}.
\]
%%\end{linenomath}
Note that
$
  \Breve W^1 = \frac{1}{2} \left\{ \begin{bmatrix} U_1 & 0 \\ 0 & 0 \end{bmatrix} +  \begin{bmatrix}  U_2 & -U_2 \\ -U_2 & U_2 \end{bmatrix} \right\}
$
and the two matrices on the right hand side are both PSD and the intersection of their null spaces is the zero subspace. Hence, $\breve W^1$ is PD. Similarly,  $\breve W^2$ is PD, and the other $\breve W^s$'s are PSD. Since $\breve W^1$ is PD, we see that for an arbitrary $\delta_1 \in (0, \lambda_{\min}(\breve W^1))$, $\wh W^1:= \breve W^1 -\delta_1 \cdot I_{2p}$ is PD. Hence,
%%\begin{linenomath}
\[
  \Grave W^2 : = \breve W^2 + \delta_1 \cdot \begin{bmatrix} I_p &  &  \\  & I_p &  \\  &  & 0 \end{bmatrix}  = \frac{1}{2} \begin{bmatrix} U_1+U_2 + 2 \delta_1 \cdot I_p  & -U_2 & 0 \\  -U_2 & U_2+U_3 + 2 \delta_1 \cdot I_p  & -U_3 \\  0 & -U_3 & U_3 \end{bmatrix}
\]
%%\end{linenomath}
is also PD. Therefore, for an arbitrary $\delta_2 \in (0, \lambda_{\min}(\Grave W^2))$, the matrix $\wh W^2 := \Grave W^2 - \delta_2 \cdot \begin{bmatrix} 0 &  &  \\  & I_p &  \\  &  & I_p \end{bmatrix}$
is PD.
Further, it is easy to show that the  matrix
$\Grave W^3 : = \breve W^3 + \delta_2 \cdot \begin{bmatrix} I_p &  &  \\  & I_p &  \\  &  & 0 \end{bmatrix} $
 is PD
such that for any $\delta_3 \in (0, \lambda_{\min}(\Grave W^3))$, the matrix
$
  \wh W^3 := \Grave W^3 - \delta_3 \cdot \begin{bmatrix} 0 &  &  \\  & I_p &  \\  &  & I_p \end{bmatrix}
$
is PD. Continuing this process by induction, we see that $\wh W^s$ is PD for all $s=4, \ldots, n-1$ and $\wh W^{n-1} := \Grave W^{n-1} - \delta_{n-1} \cdot \begin{bmatrix} 0 &  &  \\  & I_p &  \\  &  & I_p \end{bmatrix}$ is PD for an arbitrary $\delta_{n-1} \in (0, \lambda_{\min}(\Grave W^{n-1}))$, where $\Grave W^{n-1}$ is PD. Finally, define
$
 \wh W^n := \breve W^n + \delta_{n-1} \cdot I_{2p},
$
which is clearly PD. Using these PD $\wh W^s, \, s=1, \ldots, n$, we construct $\wt W^s$ by setting the possibly nonzero block in each $\wt W^s$ as $\wh W^s$.
Specifically,
%%\begin{linenomath}
\[
   \begin{bmatrix} (\wt W^1)_{1,1} & (\wt W^1)_{1,2} \\ (\wt W^1)_{2, 1} & (\wt W^1)_{2, 2} \end{bmatrix} = \wh W^1 \in \mathbb R^{2p \times 2p}, \qquad   \begin{bmatrix} (\wt W^n)_{n-1,n-1} & (\wt W^n)_{n-1,n} \\ (\wt W^n)_{n, n-1} & (\wt W^n)_{n, n} \end{bmatrix} = \wh W^n \in \mathbb R^{2p \times 2p}  ,
\]
%%\end{linenomath}
and for each $s=2, \ldots, n-1$,
%%\begin{linenomath}
\[
  \begin{bmatrix} (\wt W^s)_{s-1,s-1} & (\wt W^s)_{s-1,s} & 0 \\  (\wt W^s)_{s, s-1} & (\wt W^s)_{s, s} & (\wt W^s)_{s, s+1} \\  0 & (\wt W^s)_{s+1, s} & (\wt W^s)_{s+1, s+1} \end{bmatrix} = \wh W^s  \in \mathbb R^{3 p\times 3p}.
\]
%%\end{linenomath}
A straightforward calculation shows that $W=\sum^n_{s=1} \wt W^s$, yielding the desired result.
\end{proof}

%%\begin{remark} \rm

To  obtain a desired decomposition using the above proposition, we observe that the matrix $V$ in (\ref{eqn:V_matrix}) is given by $\mathbf S^{-T} \Theta \mathbf S^{-1}$ for some matrix $\Theta$ of the form given below (\ref{eqn:triadigonal_mat_product}) whose blocks are positive combinations of $Q_{z, s}, Q_{z', s}$ and $Q_{w, s}$. Since $Q_{z, s}$ and $Q_{z', s}$ are diagonal and PSD and $Q_{w, s}$ are diagonal and PD, each  block of $\Theta$ is diagonal and PD or PSD. Moreover, by Lemma~\ref{lem:W_PD}, $W$ is  PD. Hence,  there are uncountably many ways to construct positive $\delta_s$, and thus PD $\wh W^s$, as shown in the above proposition.
Therefore,
we obtain the following strongly convex decomposition for the objective function $J$ in (\ref{eq:Multi_user}), where we set the constant $\gamma=0$ without loss of generality:
%%\begin{linenomath}
\[
  J(\mathbf u) = \frac{1}{2} \ubld^T W \ubld + c^T \ubld = \sum^n_{i=1} \frac{1}{2} \ubld^T \wt W^i \ubld + \sum^n_{i=1} c^T_{\Ical_i} \ubld_i = J_1(\ubld_1, \ubld_2) + \sum^{n-1}_{i=2} J_i(\ubld_{i-1}, \ubld_i, \ubld_{i+1}) + J_n(\ubld_{n-1}, \ubld_n)
\]
%%\end{linenomath}
where the strongly convex functions $J_i$ are given by
%%\begin{linenomath}
\begin{eqnarray}
   J_1(\ubld_1, \ubld_2) & := & \frac{1}{2} \begin{bmatrix} \ubld^T_1 & \ubld^T_2 \end{bmatrix} \wh W_1 \begin{bmatrix} \ubld_1 \\ \ubld_2 \end{bmatrix} + c^T_{\Ical_1} \ubld_1, \notag \\
   J_i (\ubld_{i-1}, \ubld_{i}, \ubld_{i+1}) & := & \frac{1}{2} \begin{bmatrix} \ubld^T_{i-1} & \ubld^T_i & \ubld^T_{i+1} \end{bmatrix} \wh W_i \begin{bmatrix} \ubld_{i-1} \\ \ubld_i \\ \ubld_{i+1} \end{bmatrix} + c^T_{\Ical_i} \ubld_i, \qquad \forall \ i=2, \ldots, n-1, \label{eqn:J_i_linear_case} \\
   J_n(\ubld_{n-1}, \ubld_n) & := & \frac{1}{2} \begin{bmatrix} \ubld^T_{n-1} & \ubld^T_n \end{bmatrix} \wh W_n \begin{bmatrix} \ubld_{n-1} \\ \ubld_n \end{bmatrix} + c^T_{\Ical_n} \ubld_n. \notag
\end{eqnarray}
%%\end{linenomath}
%

%
\begin{remark} \rm \label{remark:decompostion_extension}
Clearly, the above decomposition method is applicable to any vehicle communication network satisfying the assumption $\bf A.1$ in Section~\ref{sect:dynamics_constraint_topology},
i.e., $(i, i+1) \in \mathcal E$ for all $i=1, \ldots, n-1$.
Besides, various alternative approaches can be developed to construct PD matrices $\wh W^s$ using the similar idea given in the above proposition. Further, a similar decomposition method can be developed for other vehicle communication networks different from the the cyclic-like graph. For instance, if such a graph contains edges other than $(i, i+1) \in \mathcal E$, one can add or subtract certain small terms pertaining to these extra edges in relevant matrices, which will preserve the desired PD property.
\end{remark}

In what follows,  we write each $J_i$ as $J_i(\ubld_i, (\mathbf u_j)_{j \in \mathcal N_i})$ for notational convenience, where $\mathcal N_i$ denotes the set of neighbors of vehicle $i$ in a vehicle network such that $i-1, i+1 \in \mathcal N_i$ for $i=2, \ldots, n-1$ and $2 \in \mathcal N_1, n-1 \in \mathcal N_n$.

%--------------------------------------------------------------------
%
\subsection{Operator Splitting Method based Fully Distributed Algorithms} \label{subsect:ist_scheme_linear}

For illustration simplicity, we consider the cyclic like network topology through this subsection, although the proposed schemes can be easily extended to other network topologies under a suitable assumption ((cf. Remark~\ref{remark:decompostion_extension}). In this case, $\mathcal N_1 = \{ 2\}$, $\mathcal N_{n} = \{ n-1 \}$, and $\mathcal N_{i} = \{ i-1, i+1 \}$ for $i=2, \ldots, n-1$.

Define the constraint set
%%\begin{linenomath}
\begin{align*}
 \mathcal P & \, := \, \{ \mathbf u=(\mathbf u_1, \ldots, \mathbf u_n) \in \mathbb R^{np} \, | \, \mathbf u_i \in \mathcal X_i, \ \mathbf u \in \mathcal Z_i, \ i=1, \ldots, n\}.
\end{align*}
%%\end{linenomath}
Recall that $\mathcal P$ is defined by convex quadratic functions. The underlying optimization problem (\ref{eq:Multi_user}) at time $k$ becomes $\min_{\mathbf u} J(\mathbf u)$ subject to $\mathbf u \in \mathcal P$.

We formulate this problem as a locally coupled convex optimization problem \cite{HuXiaoLiu_CDC18} and  solve it using fully distributed algorithms. Specifically, in view of the decompositions given by (\ref{eqn:J_i_linear_case}), the objective function in (\ref{eq:Multi_user}) can be written as
%%\begin{linenomath}
\[
   J(\mathbf u) = \sum^n_{i=1} J_i ( \mathbf u_i, (\mathbf u_j)_{j \in \mathcal N_i}),
\]
%%\end{linenomath}
In view of Remark~\ref{remark:local_coupled_constraint}, the safety constraints are also locally coupled.
Let  $\deltabf_S$ denote the indicator function of a (closed convex) set $S$. Define, for each $i=1, \ldots, n$,
%%%\begin{linenomath}
\begin{align*}
  \wh J_{i}( \mathbf u_i, (\mathbf u_j)_{j \in \mathcal N_i}) & \, := \,   J_i (\mathbf u_i, (\mathbf u_j)_{j \in \mathcal N_i} ) + \deltabf_{\mathcal X_i}(\ubld_i) + \deltabf_{\mathcal Z_i}(\mathbf u_{i-1}, \mathbf u_i).
\end{align*}
%%\end{linenomath}
%
As in \cite{HuXiaoLiu_CDC18}, define $\wh{\mathbf u}_i:= \big(\mathbf u_i, (\mathbf u_{i, j} )_{j \in \mathcal N_i} \big)$, where the new variables $\mathbf u_{i, j}$ represent the predicted values of $\mathbf u_j$ of vehicle $j$ in the neighbor of vehicle $i$, and let $\wh {\mathbf u} := ( \wh{\mathbf u}_i)_{i=1, \ldots, n} \in \mathbb R^{\ell}$.
 Define the consensus subspace
%%\begin{linenomath}
\[
   \mathcal A \, := \, \{ \wh{\mathbf u} \, | \,  \mathbf u_{i, j} = \mathbf u_j, \ \forall \, (i, j) \in  \Ecal \}.
\]
%%\end{linenomath}
Then the underlying optimization problem  (\ref{eq:Multi_user}) can be equivalently written as
%%\begin{linenomath}
\[
\min_{\wh{\mathbf u} } \ \ \sum^n_{i=1} \wh J_{i}(\wh{\mathbf u}_i), \quad \mbox{ subject to } \quad \wh{\mathbf u} \in \mathcal A.
\]
Let  $\Pcal_{i}:=\{ \wh{\mathbf u}_i \, | \, \mathbf u_i \in \mathcal X_i, \ (H_i(\mathbf u_{i, i-1}, \mathbf u_i))_s\le 0, \, \forall \, s=1, \ldots, p \}$ for notational simplicity. Then  the underlying optimization \tblue{problem} becomes
%
%%\begin{linenomath}
\begin{equation} \label{eqn:opt_lin_slitting}
\min_{\wh{\mathbf u} } \, F(\wh\ubld):= \sum^n_{i=1} J_{i}(\wh{\mathbf u}_i) +  \sum^n_{i=1} \deltabf_{\Pcal_{i}}(\wh{\mathbf u}_i)+ \deltabf_{\mathcal A}(\wh{\mathbf u}),
\end{equation}
%%\end{linenomath}
%
 where $F:\mathbb R^{\ell} \rightarrow \mathbb R\cup\{+\infty\}$ denotes the extended-valued objective function. Thus $F$ is the sum of two indictor functions of closed convex sets and the convex quadratic function given by $J(\wh{\mathbf u}):= \sum^n_{i=1} J_{i} (\wh{\mathbf u}_i)$, by slightly abusing the notation. Note that $\mathcal A$ is polyhedral. It is easy to show via Corollary~\ref{coro:nonempty_interior_general_p} that the Slater's condition holds under the mild assumptions given in Corollary~\ref{coro:nonempty_interior_general_p}, e.g., $v_0(k)>v_{\min}$ for all $k \in \mathbb Z_+$.
Hence, by \cite[Corollary 23.8.1]{Rockafellar_book70}, $\partial F(\wh\ubld) = \sum^n_{i=1}\Big( \nabla J_i (\wh{\mathbf u}_i) +  \mathcal N_{\Pcal_{i}} (\wh{\mathbf u}_i) \Big) + \mathcal N_{\mathcal A}(\wh{\mathbf u})$  in light of $\partial \deltabf_C(x) = \mathcal N_C(x)$, where $\mathcal N_C(x)$ denotes the normal cone of a closed convex set $C$ at $x \in C$. Finally, the formulation given by (\ref{eqn:opt_lin_slitting}) is  a locally coupled convex optimization problem; see Section~\ref{subsect:locally_coupled_opt_formulation}. This formulation allows one to develop fully distributed schemes. Particularly,
in fully distributed computation, each vehicle $i$ only knows $\wh{\mathbf u}_i$ and $\wh J_i$ (i.e., $J_i$ and $\mathcal P_i$) but does not know  $\wh{\mathbf u}_j$ and $\wh J_j$ with $j\ne i$. Each vehicle $i$ will exchange information with its neighboring vehicles to update $\wh{\mathbf u}_i$ via a distributed scheme.

\begin{algorithm}%[t]
%=====================================================================================
\caption{Generalized Douglas-Rachford Splitting Method based Fully Distributed Algorithm}
\begin{algorithmic}[1]
\label{algo:DR_distributed_splitting}
%=====================================================================================
%
\STATE Choose constants $0<\alpha <1$ and $\rho>0$

\STATE Initialize $k=0$, and choose an initial point $z^0$

\WHILE{the stopping criteria is not met}

  \FOR {$i=1, \ldots, n$}

   \STATE Compute $\ol z^k_i$ using equation (\ref{eqn:consensus_projection}), and let $w^{k+1}_i \leftarrow \ol z^k_i$

  \ENDFOR

  \FOR {$i=1, \ldots, n$}

   \STATE {$z^{k+1}_i \leftarrow z^k_i +  2 \alpha \cdot \Big[ \mbox{Prox}_{\rho \wh J_i} \big( 2 w^{k+1}_i - z^k_i\big ) - w^{k+1}_i \Big]$
   }

  \ENDFOR

   \STATE $k\leftarrow k+1$

\ENDWHILE

\RETURN $\wh{\mathbf u}^* = w^k$

%=====================================================================================
\end{algorithmic}
%=====================================================================================
\end{algorithm}

We introduce more notation first. For a proper, lower semicontinuous convex function $f:\mathbb R^n \rightarrow \mathbb R \cup \{ +\infty \}$,  let $\mbox{Prox}_{f}(\cdot)$ denote the proximal operator, i.e., for any given $x \in \mathbb R^n$,
\[
  \mbox{Prox}_{f}(x) \, := \, \argmin_{z \in \mathbb R^n} \ f(z) + \frac{1}{2} \| z - x \|^2_2.
\]
Further, $\Pi_C$ denotes the Euclidean projection onto a closed convex set $C$. Using this notation, we present a specific operator splitting method based distributed scheme for solving (\ref{eqn:opt_lin_slitting}). By grouping the first two sums (with separable variables) in the objective function of (\ref{eqn:opt_lin_slitting}), we apply the generalized Douglas-Rachford splitting algorithm \cite{HuXiaoLiu_CDC18}. Recall that $\wh J_i(\wh{\mathbf u}_i) := J_i(\wh{\mathbf u}_i) + \deltabf_{\Pcal_{i}}(\wh{\mathbf u}_i)$ for each $i=1, \ldots, n$. For any constants $\alpha$ and $\rho$ satisfying $0<\alpha< 1$ and $\rho>0$, this algorithm is given by:
\begin{align*}
   w^{k+1} \, = \, \Pi_{\mathcal A} (z^k), \qquad
   z^{k+1} \, = \, z^k + 2 \alpha \cdot \Big[ \mbox{Prox}_{\rho \wh J_1+ \cdots + \rho \wh J_n} \big( 2 w^{k+1} - z^k\big ) - w^{k+1} \Big].
\end{align*}
It is shown in \cite{DavisYin_SVA17, HuXiaoLiu_CDC18} that the sequence $(w^k)$ converges to the unique minimizer $\mathbf {\wh u}^*$ of the underlying optimization problem (\ref{eqn:opt_lin_slitting}). In the above scheme, $\Pi_{\mathcal A}$ is the orthogonal projection onto the consensus subspace $\mathcal A$ such that the following holds: for any $\wh {\mathbf u} := ( \wh{\mathbf u}_1, \ldots, \wh{\mathbf u}_n)$ where $\wh{\mathbf u}_i:= \big(\mathbf u_i, (\mathbf u_{ij} )_{j \in \mathcal N_i} \big)$, $\ol  {\mathbf u} := \Pi_{\mathcal A}(\wh {\mathbf u})$ is given by  \cite[Section IV]{HuXiaoLiu_CDC18}:
%%\begin{linenomath}
\begin{equation} \label{eqn:consensus_projection}
   \ol {\mathbf u}_j = \ol {\mathbf u}_{ij} =  \frac{1}{1+|\mathcal N_j| } \Big(  \wh{\mathbf u}_j  + \sum_{k \in \mathcal N_j} \wh{\mathbf u}_{kj} \Big), \qquad \forall \, (i, j) \in \mathcal E.
\end{equation}
%%\end{linenomath}
Furthermore, due to the decoupled structure of $\wh J_i$'s, we obtain the distributed version of the above algorithm, which is also summarized in Algorithm~\ref{algo:DR_distributed_splitting}:
\begin{subequations} \label{eqn:DR_scheme}
\begin{align}
   w^{k+1}_i & \, = \, \ol z^k_i, \qquad i=1, \ldots, n; \\
   z^{k+1}_i & \, = \, z^k_i + 2 \alpha \cdot \Big[ \mbox{Prox}_{\rho \wh J_i} \big( 2 w^{k+1}_i - z^k_i\big ) - w^{k+1}_i \Big], \quad i=1, \ldots, n.
\end{align}
\end{subequations}

In the distributed scheme (\ref{eqn:DR_scheme}), the first step (or Line 5 of Algorithms~\ref{algo:DR_distributed_splitting}) is a consensus step, and the consensus computation is carried out in a fully distributed and synchronous manner as indicated in \cite[Section IV]{HuXiaoLiu_CDC18}. The second step in (\ref{eqn:DR_scheme})
(or Line 8  of Algorithms~\ref{algo:DR_distributed_splitting}) does not need inter-agent communication \cite{HuXiaoLiu_CDC18} and is performed using local computation only.  For effective computation in the second step, recall that $\wh J_i(\wh{\mathbf u}_i) := J_i(\wh{\mathbf u}_i) + \deltabf_{\Pcal_{i}}(\wh{\mathbf u}_i)$  for each $i=1, \ldots, n$ such that the proximal operator $\mbox{Prox}_{\rho \wh J_i}(\wh{\mathbf u}_i)$ becomes
\[
     \mbox{Prox}_{\rho \wh J_i}(\wh{\mathbf u}_i) \, = \, \argmin_{z \in \Pcal_{i} }  \ J_i(z) + \frac{1}{2 \rho} \| z - \wh{\mathbf u}_i \|^2_2.
\]
Since $\Pcal_i$ is the intersection of a polyhedral set and a quadratically constrained convex set and $J_i$ is a quadratic convex function, $\mbox{Prox}_{\rho \wh J_i}(\wh{\mathbf u}_i)$ is formulated as a QCQP and can be solved via a second order cone program \cite{AlizadehG_MPB03} or a  semidefinite program.
Efficient numerical packages, e.g., SeDuMi \cite{Sturm_OMS99}, can be used for solving the QCQP.
Lastly, a typical (global) stopping criterion in the scheme (\ref{eqn:DR_scheme}) (or Algorithm~\ref{algo:DR_distributed_splitting}) is defined by the error tolerance of two neighboring $z^k$'s, i.e.,  $\| z^{k+1}- z^k\|_2 \le \varepsilon$, where $\varepsilon>0$ is an error tolerance. For distributed computation, one can use its local version, i.e., $\| z^{k+1}_i- z^k_i\|_2 \le \varepsilon/n$, as a stopping criterion for each vehicle.

%%%\gap

\begin{remark} \rm \label{remark:other_schemes}
  Other distributed algorithms can be used to solve the underlying optimization problem (\ref{eqn:opt_lin_slitting}).
   For example, the three operator splitting schemes developed in \cite{DavisYin_SVA17} can be applied. To describe such schemes,
    let $\wh L:=\max_{i=1, \ldots, n} \|\wh W^i\|_2>0$.  the Hessian $H J(\wh{\mathbf u}) = \mbox{diag}( \wh W^i)_{i=1, \ldots, n}$.
Hence, $\nabla J$ is $\wh L$-Lipschitz continuous and thus $1/L$-cocoercive. Further, the two indicator functions are proper, closed, and convex functions. Choose the constants $\gamma, \lambda$ such that $0<\gamma < 2/\wh L$ and $0< \lambda < 2- \frac{\gamma \wh L}{2}$. Then for any initial condition $z^0$, the iterative scheme is given by  \cite[ Algorithm 1]{DavisYin_SVA17}:
%%\begin{linenomath}
\begin{align*}
   w^{k+1} \, = \, \Pi_{\mathcal A} (z^k), \qquad
   z^{k+1} \, = \, z^k + \lambda \cdot \Big[ \Pi_{\Pcal_{1} \times \cdots \times \Pcal_{n}} \big( 2 w^{k+1} - z^k - \gamma \nabla J(w^{k+1}) \big ) - w^{k+1} \Big].
\end{align*}
In view of the similar discussions for consensus computation and decoupled structure of
the projection $\Pi_{\Pcal_{1} \times \cdots \times \Pcal_{n}}$, we obtain the distributed version of the above algorithm:
\begin{align*}
   w^{k+1}_i & \, = \, \ol z^k_i, \qquad i=1, \ldots, n; \\
   z^{k+1}_i & \, = \, z^k_i + \lambda \cdot \Big[ \Pi_{\Pcal_{i}} \big( 2 w^{k+1}_i - z^k_i - \gamma \cdot [ \wh W^{i} w^{k+1}_i + c_{\Ical_i} ] \big ) - w^{k+1}_i \Big], \quad i=1, \ldots, n.
\end{align*}
In this scheme, the Euclidean projection $\Pi_{\Pcal_{q,i}}$ can be formulated as a QCQP and be solved via a second order cone program.

When each $\wh W^i$ is PD, each $J_i$ is strongly convex. Thus $\nabla J$ is $\mu$-strongly monotone with $\mu=\min_{i=1, \ldots, n}\lambda_{\min}( \wh W^i )$, i.e., $(x-y)^T(\nabla J(x) - \nabla J(y)) \ge \mu \|x- y \|^2_2, \forall \, x, y$. Since the subdifferential of the indicator function of a closed convex set is monotone, an accelerated scheme developed in \cite[Algorithm 2]{DavisYin_SVA17} can be exploited. In particular,  let $\eta$ be a constant with $0<\eta<1$, and $\gamma_0 \in (0, 2/(\wh L\cdot (1-\eta))$. Set the initial points for an arbitrary $z^0$, $w^0=\Pi_{\mathcal A}(z^0)$ and $v^0=(z^0-w^0)/\gamma_0$.
The distributed version of this scheme is  given by:
%%\begin{linenomath}
\begin{align*}
   w^{k+1}_i & \, = \, \ol z^k_i + \gamma_k \ol v^k_i, \qquad i=1, \ldots, n; \\
   v^{k+1}_i & \, = \, \frac{1}{\gamma_k} \big( z^k_i + \gamma_k v^k_i- w^{k+1}_i \big), \qquad i=1, \ldots, n; \\
   \gamma_{k+1} &  \, = \, -  \wt\mu \gamma^2_k + \sqrt{  (\wt \mu \gamma^2_k)^2 + \gamma^2_k}; \\
   z^{k+1}_i & \, =  \Pi_{\Pcal_{i}} \Big( w^{k+1}_i - \gamma_{k+1} v^{k+1}_i - \gamma_{k+1}[ \wh W^i w^{k+1}_i + c_{\Ical_i}] \Big), \quad i=1, \ldots, n,
\end{align*}
where $\wt \mu :=\eta \cdot \mu$. It is shown in  \cite[Theorem 1.2]{DavisYin_SVA17} that $(w^k)$ converges to the unique minimizer $\mathbf {\wh u}^*$ with $\| w^k - \wh {\mathbf u}^*\|_2 = O(1/(k+1))$.
\end{remark}

%-------------------------------------------------------------------------------
%
\section{Control Design and Stability Analysis of the Closed Loop  Dynamics} \label{sect:control_analysis}

In this section, we discuss how to choose the weight matrices $Q_{z, s}$,  $Q_{z', s}$ and $Q_{w, s}$ to achieve the desired closed loop performance, including stability and traffic transient dynamics. For the similar reasons given in  \cite[Section 5]{GShenDu_TRB16}, we focus on the constraint free case.

Under the linear vehicle dynamics, the closed-loop system is also a linear system. Specifically, the linear closed-loop dynamics are given by
%%\begin{linenomath}
\begin{equation} \label{eqn:dynamics_vector_form}
    z(k+1) \, = \, z(k) + \tau z'(k) + \frac{\tau^2}{2} w(k), \qquad z'(k+1) \, = \, z'(k) + \tau w(k),
\end{equation}
%%\end{linenomath}
where $w(k)$ is a unique solution to an unconstrained optimization problem arising from the MPC and is a linear function of $z(k)$ and $z'(k)$ to be determined as follows.

\gap

\noindent{\bf Case (i): $p=1$}. In this case, we write $Q_{z, 1}, Q_{z', 1}, Q_{w, 1}$ as $Q_{z}, Q_{z'}, Q_{w}$ respectively. Then the objective function becomes
%%\begin{linenomath}
\[
   J(w(k)) \, = \, \frac{1}{2} \Big[ z^T(k+1) Q_z, z(k+1) + (z'(k+1))^T Q_{z'} z'(k+1) \Big] + \frac{\tau^2}{2} \wt w^T(k) Q_w \wt w(k),
\]
%%\end{linenomath}
where we recall that $\wt w(k) = w(k) - u_0(k) \mathbf e_1$.
It follows from the similar argument in \cite[Section 5]{GShenDu_TRB16} that the the closed-loop  system is given by the following linear system:
%
%%\begin{linenomath}
\begin{eqnarray} \label{eqn:linear_closed_loop}
\begin{bmatrix} z(k+1) \\ z'(k+1) \end{bmatrix} \, = \, \underbrace{ \begin{bmatrix} I_n - \frac{\tau^2}{4} \wh W Q_z  & \tau  I_n -  \wh W \Big( \frac{\tau^3}{4} Q_z + \frac{\tau}{2} Q_{z'} \Big)  \\ -\frac{\tau}{2} \wh W Q_z & I_n - \wh W \Big( \frac{\tau^2}{2} Q_z +  Q_{z'} \Big) \end{bmatrix} }_{A_{\mbox{c}}} \begin{bmatrix} z(k) \\ z'(k) \end{bmatrix}+ \begin{bmatrix} \frac{\tau^2}{2} I_n \\ \tau I_n \end{bmatrix} \wh W Q_w \mathbf e_1 \cdot u_0(k),
\end{eqnarray}
%%\end{linenomath}
%
where $A_{\mbox{c}}$ is the closed loop dynamics matrix, and
%%\begin{linenomath}
\begin{equation} \label{eqn:W_matrix}
  \wh W \, := \,  \left[ \frac{ \tau^2 Q_z }{4} +  Q_{z'}  + Q_w \right]^{-1}.
\end{equation}
%%\end{linenomath}
%

%
%
The matrix $A_{\mbox{c}}$ in (\ref{eqn:linear_closed_loop}) plays an important role in the closed loop stability and desired transient dynamical performance.
Since $Q_{z}, Q_{z'}$ and $Q_{w}$ are all  diagonal and PSD (resp. PD),
we have $Q_{z} = \mbox{diag}(\alphabf)$, $Q_{z'}=\mbox{diag}(\betabf)$, and $Q_{w}=\mbox{diag}(\zetabf)$, where $\alphabf, \betabf\in \mathbb R^n_{+}$ and $\zetabf \in \mathbb R^n_{++}$ with $\alphabf =(\alpha_i)^n_{i=1}$, $\betabf=(\beta_i)^n_{i=1}$, and $\zetabf=(\zeta_i)^n_{i=1}$.
Hence, we write the matrix $A_{\mbox{c}}$ as $A_{\mbox{c}}(\alphabf, \betabf, \zetabf, \tau)$ to emphasize its dependence on these parameters.
The following result asserts the asymptotic stability of the linear closed-loop dynamics; its proof resembles that for \cite[Proposition 5.1]{GShenDu_TRB16} and  is thus omitted.

\begin{proposition} \label{prop:stability_closed_loop}
 Given any $\tau \in \mathbb R_{++}$ and any $\alphabf, \betabf, \zetabf \in \mathbb R^n_{++}$, the matrix $A_{\mbox{c}}(\alphabf, \betabf, \zetabf, \tau)$ is Schur stable, i.e., each eigenvalue $\mu \in \mathbb C$ of $A_{\mbox{c}}(\alphabf, \betabf, \zetabf, \tau)$ satisfies $|\mu |<1$. Moreover, for any eigenvalue $\mu_i$ of $A_{\mbox{c}}$,  the following hold:
 \begin{itemize}
   \item [(1)] if $\mu_i$ is non-real, then $|\mu_i|^2 = \frac{\zeta_i }{d_i}$, where $d_i:=\frac{\alpha_i \tau^2 }{4} + \beta_i  + \zeta_i$;
   \item [(2)] if $\mu_i$ is real, then $ 1 -  ( \frac{\alpha_i \tau^2 }{2}   + \beta_i ) \frac{1}{d_i} < \mu_i < 1- \frac{\alpha_i \tau^2}{4 d_i}$.
 \end{itemize}
\end{proposition}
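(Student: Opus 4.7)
The plan is to exploit the fact that every weight matrix is diagonal, which collapses the $2n\times 2n$ stability problem into $n$ independent $2\times 2$ problems. Since $Q_z=\diag(\alphabf)$, $Q_{z'}=\diag(\betabf)$, $Q_w=\diag(\zetabf)$, the matrix $\wh W$ in (\ref{eqn:W_matrix}) is diagonal as well, so each of the four blocks of $A_{\mbox c}$ in (\ref{eqn:linear_closed_loop}) is diagonal. The permutation reordering the state from $(z_1,\ldots,z_n,z'_1,\ldots,z'_n)$ to $(z_1,z'_1,z_2,z'_2,\ldots,z_n,z'_n)$ conjugates $A_{\mbox c}$ into a block diagonal matrix $\diag(A_1,\ldots,A_n)$, where each $A_i\in\mathbb R^{2\times 2}$ depends only on $(\alpha_i,\beta_i,\zeta_i,\tau)$. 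Consequently the spectrum of $A_{\mbox c}$ is the disjoint union of the spectra of the $A_i$, and it suffices to analyse each $A_i$ separately.

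Second, I would introduce the shorthand $a_i:=\alpha_i\tau^2/4$, $b_i:=\beta_i$, $c_i:=\zeta_i$, so that $d_i=a_i+b_i+c_i$ and $\wh W_{ii}=1/d_i$. A short direct calculation using the formulas for the four entries of $A_i$ yields
$$\textrm{tr}(A_i)\,=\,2-\frac{3a_i+b_i}{d_i}\,=\,\frac{-a_i+b_i+2c_i}{d_i},\qquad \det(A_i)\,=\,\frac{c_i}{d_i}\,=\,\frac{\zeta_i}{d_i}.$$
The determinant identity is the one step that requires care: expanding $(1-a_i/d_i)(1-(2a_i+b_i)/d_i)+\frac{2a_i}{d_i}(1-(a_i+b_i/2)/d_i)$ produces mixed quadratic terms $(2a_i^2+a_ib_i)/d_i^2$ and $-(2a_i^2+a_ib_i)/d_i^2$ that cancel exactly, leaving $1-(a_i+b_i)/d_i=c_i/d_i$. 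With these two expressions in hand, the eigenvalues of $A_i$ are the roots of $\mu^2-\textrm{tr}(A_i)\mu+\det(A_i)=0$, with discriminant $\Delta_i$ satisfying $d_i^2\Delta_i=(a_i-b_i)^2-8a_ic_i$.

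Third, when $\Delta_i<0$ the eigenvalues of $A_i$ are a complex conjugate pair $\mu_i,\bar\mu_i$, and $|\mu_i|^2=\det(A_i)=\zeta_i/d_i$; this proves claim (1) and simultaneously gives $|\mu_i|<1$ because $a_i+b_i>0$. When $\Delta_i\ge 0$, write $\mu_\pm=\textrm{tr}(A_i)/2\pm\sqrt{\Delta_i}/2$ and set $L_i:=1-(2a_i+b_i)/d_i$, $U_i:=1-a_i/d_i$. A direct check confirms $(L_i+U_i)/2=\textrm{tr}(A_i)/2$, so the containment $\mu_\pm\in(L_i,U_i)$ required by claim (2) is equivalent to $\sqrt{\Delta_i}<(a_i+b_i)/d_i$, which in turn reduces to the algebraic identity
$$(a_i+b_i)^2-d_i^2\Delta_i\,=\,4a_i(b_i+2c_i)\,>\,0,$$
valid whenever $a_i,b_i,c_i>0$. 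This also handles the repeated-root case $\Delta_i=0$.

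Finally, for Schur stability in the real case I would verify $(L_i,U_i)\subset(-1,1)$: $U_i<1$ since $a_i>0$, and $L_i>-1$ reduces to $b_i+2c_i>0$. Combined with the modulus bound from the complex case, this yields $|\mu|<1$ for every eigenvalue of every $A_i$, hence for every eigenvalue of $A_{\mbox c}$. The only nontrivial step in the whole argument is the determinant cancellation producing $\zeta_i/d_i$; everything else is a sign-check on the explicit rational expressions. I would expect the write-up to be quite short because it mirrors the structure of \cite[Proposition 5.1]{GShenDu_TRB16}, with the diagonality assumption $\bf A.2$ replacing the specific weight choice used there.
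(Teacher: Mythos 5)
Your proposal is correct: the block-diagonalization into $2\times 2$ blocks via diagonality of the weights, the trace/determinant computation giving $\det(A_i)=\zeta_i/d_i$, and the discriminant/interval analysis all check out. This is essentially the same argument the paper intends --- it omits the proof by deferring to \cite[Proposition 5.1]{GShenDu_TRB16}, and the $2\times 2$ reduction you describe is exactly the one the paper itself carries out explicitly in the proof of Lemma~\ref{lem:MPC_stability_p=2}, where $\wt A_i$ is put in precisely your form for $A_i$ before invoking that same reference.
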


\gap

\noindent{\bf Case (ii): $p>1$}. Fix $k$. For a general $p \in \mathbb N$, let $\mathbf w:=(w(k), \ldots, w(k+p-1))\in \mathbb R^{np}$. Recall that for each $s=1, \ldots, p$,
%%\begin{linenomath}
\[
z(k+s) \, = \, z(k) + s \tau z'(k) + \tau^2 \sum^{s-1}_{j=0} \frac{2(s-j)-1}{2} w(k+j), \quad
z'(k+s) = z'(k) + \tau \sum^{s-1}_{j=0} w(k+j),
\]
%%\end{linenomath}
and with slightly abusing notation, the objective function is
%%\begin{linenomath}
\begin{eqnarray*}
    J( \underbrace{ w(k), \ldots, w(k+p-1)}_{ \mathbf w} ) & = & \frac{1}{2} \sum^p_{s=1} \Big( \tau^2 \wt w^T(k+s-1) Q_{w, s} \wt w(k+s-1) \\
   & &  \qquad \quad + \ z^T(k+s) Q_{z, s} z(k+s) \, +  \, (z'(k+s))^T Q_{z', s} z'(k+s) \Big),
\end{eqnarray*}
%%\end{linenomath}
where $\wt w(k+s) := w(k+s) - u_0(k) \cdot \mathbf e_1$  introduced in Remark~\ref{remark:objective_func}. It follows from the similar development in Section~\ref{subsect:optimization_linear_dynamics} that
%%\begin{linenomath}
\[
  J(\mathbf w) = \frac{1}{2} \mathbf w^T \mathbf H \mathbf w + \mathbf w^T \left( \mathbf G \begin{bmatrix} z(k) \\ z'(k) \end{bmatrix} - u_0(k) \mathbf g \right) + \wt \gamma,
\]
%%\end{linenomath}
where $\wt \gamma$ is a constant. By a similar argument as in Lemma~\ref{lem:W_PD}, it can be shown that $\mathbf H \in \mathbb R^{pn\times pn}$ is a symmetric PD matrix. Further, it resembles the matrix $V$ in (\ref{eqn:V_matrix}) (by replacing $S^{-1}_n$ with $I_n$), i.e.,
%%\begin{linenomath}
\begin{equation} \label{eqn:Hbld_matrix}
    \mathbf H \, = \,  \begin{bmatrix} \breve\Hbld_{1,1} +\tau^2 Q_{w, 1}  & \breve\Hbld_{1, 2} & \breve\Hbld_{1,3} & \cdots & \cdots & \breve\Hbld_{1, p} \\ \breve\Hbld_{2,1} & \breve\Hbld_{2,2} +\tau^2 Q_{w, 2}  & \breve\Hbld_{2, 3} & \cdots & \cdots & \breve\Hbld_{2, p} \\ \cdots &  & \cdots &  & \cdots & \\ \cdots  &  & \cdots &  & \cdots & \\ \breve\Hbld_{p,1} & \breve\Hbld_{p, 2} & \breve\Hbld_{p, 3} & \cdots & \cdots & \breve\Hbld_{p, p} +\tau^2 Q_{w, p}  \end{bmatrix} \in \mathbb R^{np \times np},
\end{equation}
%%\end{linenomath}
where $\breve\Hbld_{i, j}$'s are diagonal PD matrices given by
%%\begin{linenomath}
\[
 \breve\Hbld_{i, j}  \, := \, \sum^p_{s=\max(i, j)} \Big( \frac{\tau^4}{4}[2(s-i)+1]\cdot [2 (s-j)+1] Q_{z, s} + \tau^2 Q_{z', s} \Big)  \in \mathbb R^{n\times n}.
\]
%%\end{linenomath}
%
Moreover, it follows from (\ref{eqn:d_s_f_s}) and (\ref{eqn:lin_term_in_J}) that the matrix $\mathbf G$ and  constant vector $\mathbf g$ are
%%\begin{linenomath}
\begin{equation} \label{eqn:G_bld}
  \mathbf G \, := \, \begin{bmatrix} \mathbf G_{1, 1} &  \mathbf G_{1, 2} \\ \vdots & \vdots \\ \mathbf G_{p, 1} & \mathbf G_{p, 2} \end{bmatrix} \in \mathbb R^{pn \times 2 n}, \qquad
  \mathbf  g \, := \, \tau^2 \begin{bmatrix} Q_{w, 1} \mathbf e_1 \\ \vdots \\ Q_{w, p} \mathbf e_1 \end{bmatrix} \in \mathbb R^{pn}.
\end{equation}
%%\end{linenomath}
where $\mathbf G_{i, 1}, \mathbf G_{i, 2} \in \mathbb R^{n\times n}$ are given by: for each $i=1, \ldots, p$,
%%\begin{linenomath}
\begin{eqnarray*}
 \mathbf G_{i, 1} & = & \tau^2 \sum^p_{s=i} \frac{ 2(s-i) + 1 }{2} Q_{z, s}, \qquad \quad
 \mathbf G_{i,2} \, = \, \tau^3 \sum^p_{s=i}s  \frac{ 2(s-i) + 1 }{2}  Q_{z, s} + \tau \sum^p_{s=i} Q_{z', s}.
\end{eqnarray*}
%%\end{linenomath}
%
Hence, the optimal solution is  $\mathbf w_*=(w_*(k), w_*(k+1), \ldots, w_*(k+p-1))=-\mathbf H^{-1} \Big(\mathbf G \begin{bmatrix} z(k) \\ z'(k) \end{bmatrix} -  u_0(k) \mathbf g \Big)$, and
$w_*(k)= - \begin{bmatrix} I_n & 0 & \cdots & 0 \end{bmatrix} \mathbf H^{-1} \Big(\mathbf G \begin{bmatrix} z(k) \\ z'(k) \end{bmatrix} -  u_0(k) \mathbf g \Big)$.
Define the matrix $\mathbf K$ and the vector $\mathbf d$ as
%%%\begin{linenomath}
\begin{equation} \label{eqn:K_mat_d_vec}
 \mathbf K := - \begin{bmatrix} I_n & 0 & \cdots & 0 \end{bmatrix} \mathbf H^{-1} \mathbf G \in \mathbb R^{n\times 2n}, \qquad \mathbf d:= \begin{bmatrix} I_n & 0 & \cdots & 0 \end{bmatrix} \mathbf H^{-1} \mathbf g \in \mathbb R^{n}.
\end{equation}
%%\end{linenomath}
The closed loop system becomes
%%\begin{linenomath}
\[
   \begin{bmatrix} z(k+1) \\ z'(k+1) \end{bmatrix} = \underbrace{\left\{ \begin{bmatrix} I_n & \tau I_n \\ 0 & I_n \end{bmatrix} + \begin{bmatrix} \frac{\tau^2}{2} I_n \\ \tau I_n \end{bmatrix} \mathbf  K \right \} }_{A_{\mbox{c}}}  \begin{bmatrix} z(k) \\ z'(k) \end{bmatrix} +  \begin{bmatrix} \frac{\tau^2}{2} I_n \\ \tau I_n \end{bmatrix}  u_0(k) \cdot \mathbf d,
\]
%%\end{linenomath}
where $A_{\mbox{c}}$ is the closed loop dynamics matrix, and the subscript of $A_{\mbox{c}}$ represents the closed loop.

Since  $Q_{z, s}, Q_{z', s}$ are diagonal PSD and $Q_{w, s}$ are diagonal PD for all $s=1, \ldots, p$, we write them as $Q_{z, s} = \mbox{diag}(\alphabf^s)$, $Q_{z', s} = \mbox{diag}(\betabf^s)$, and $Q_{w, s} = \mbox{diag}(\zetabf^s)$, where $\alphabf^s, \betabf^s \in \mathbb R^n_{+}$ and $\zetabf^s \in \mathbb R^n_{++}$ for all $s=1, \ldots, p$ with  $\alphabf^s =(\alpha^s_i)^n_{i=1}$, $\betabf^s=(\beta^s_i)^n_{i=1}$, and $\zetabf^s=(\zeta^s_i)^n_{i=1}$ for each $s$.
Let $\boldsymbol\alpha:=(\alphabf^1, \ldots, \alphabf^p)$, $\boldsymbol\beta:=(\betabf^1, \ldots, \betabf^p)$, and $\boldsymbol\zeta:=(\zetabf^1, \ldots, \zetabf^p)$. We write the matrix $A_{\mbox{c}}$ as $A_{\mbox{c}}(\boldsymbol\alpha, \boldsymbol\beta, \boldsymbol\zeta, \tau)$ to emphasize its dependence on these parameters.

It can be shown that there exists a permutation matrix $\wh E \in \mathbb R^{2n \times 2n}$ such that $\wt A := \wh E^T  A_{\mbox{c}} \wh E$ is a block diagonal matrix, i.e., $\wt A = \mbox{diag}( \wt A_1, \wt A_2, \ldots, \wt A_n)$
whose each block $\wt A_i \in \mathbb R^{2\times 2}$ is given by
%%\begin{linenomath}
\[
 \wt A_i \, = \, \begin{bmatrix} 1 & \tau \\ 0 & 1 \end{bmatrix} +  \begin{bmatrix} \frac{\tau^2}{2} \\ \tau \end{bmatrix} \wt{ \mathbf  K}_i, \qquad \forall \ i=1, \ldots, n.
\]
%%\end{linenomath}
Here for each $i=1, \ldots, n$, $\wt { \mathbf  K}_i := - \mathbf e^T_1 \, \wt{\mathbf H}^{-1}_i \, \wt{\mathbf G}_i \in \mathbb R^{1\times 2}$, where
%%\begin{linenomath}
\[
   \wt \Hbld_i \, := \, \begin{bmatrix} (\breve\Hbld_{1,1} +\tau^2 Q_{w, 1})_{i, i}  & (\breve\Hbld_{1, 2})_{i, i} & (\breve\Hbld_{1,3})_{i, i} & \cdots & \cdots & (\breve\Hbld_{1, p})_{i, i} \\ (\breve\Hbld_{2,1})_{i, i} & (\breve\Hbld_{2,2} +\tau^2 Q_{w, 2})_{i, i}  & (\breve\Hbld_{2, 3})_{i, i} & \cdots & \cdots & (\breve\Hbld_{2, p})_{i, i} \\ \cdots &  & \cdots &  & \cdots & \\ \cdots  &  & \cdots &  & \cdots & \\ (\breve\Hbld_{p,1})_{i, i} & (\breve\Hbld_{p, 2})_{i, i} & (\breve\Hbld_{p, 3})_{i, i} & \cdots & \cdots & (\breve\Hbld_{p, p} +\tau^2 Q_{w, p})_{i, i}  \end{bmatrix} \in \mathbb R^{p \times p},
\]
%%\end{linenomath}
and
%%\begin{linenomath}
\[
  \wt \Gbld_i \, := \, \begin{bmatrix} (\mathbf G_{1, 1})_{i, i} &  (\mathbf G_{1, 2})_{i, i} \\ \vdots & \vdots \\ (\mathbf G_{p, 1})_{i, i} & (\mathbf G_{p, 2})_{i, i} \end{bmatrix} \in \mathbb R^{p \times 2}.
\]
%%\end{linenomath}
Note that $\wt{\mathbf H} = \mbox{diag}( \wt{\mathbf H}_1, \wt{\mathbf H}_2, \ldots, \wt{\mathbf H}_n)  = E^T \mathbf H  E $ for the permutation matrix $E \in \mathbb R^{pn\times pn}$ given by (\ref{eqn:E_matrix}). Since $\mathbf H$ is PD, so are all the $\wt \Hbld_i$'s.

As examples, we give the closed form expressions of $\wt \Hbld_i$ and $\wt \Gbld_i$ for some small $p$'s.
When $p=1$, $\wt{\mathbf H}_i = \tau^2 \big( \frac{\tau^2}{4} \alpha^1_i + \beta^1_i + \zeta^1_i\big)$ and $\wt{\mathbf G}_i =\begin{bmatrix} \frac{\tau^2}{2} \alpha^1_i & \frac{\tau^3}{2} \alpha^1_i + \tau \beta^1_i \end{bmatrix}$ for each $i=1, \ldots, s$. When $p=2$, we have, for each $i=1, \ldots, n$,
%%\begin{linenomath}
\[
  \wt{\mathbf H}_i = \tau^2 \begin{bmatrix} \frac{\tau^2}{4} \alpha^1_i +  \frac{9\tau^2}{4} \alpha^2_i + \beta^1_i + \beta^2_i + \zeta^1_i &  \frac{3\tau^2}{4} \alpha^2_i + \beta^2_i \\
  \frac{3\tau^2}{4} \alpha^2_i + \beta^2_i &  \frac{\tau^2}{4} \alpha^2_i + \beta^2_i + \zeta^2_i \end{bmatrix} \in \mathbb R^{2\times 2},
\]
%%\end{linenomath}
and
%%\begin{linenomath}
\[
   \wt{\mathbf G}_i =  \begin{bmatrix} \tau^2 \frac{ \alpha^1_i + 3 \alpha^2_i }{2} &  \frac{\tau^3}{2} \alpha^1_i + 3 \tau^3 \alpha^2_i + \tau (\beta^1_i+ \beta^2_i) \\
  \frac{\tau^2}{2} \alpha^2_i  &  \tau^3 \alpha^2_i + \tau \beta^2_i \end{bmatrix} \in \mathbb R^{2\times 2}.
\]
%\end{linenomath}
%

\begin{lemma} \label{lem:MPC_stability_p=2}
 Let $p=2$. For any $\tau>0$, $(\alpha^1_i, \beta^1_i, \zeta^1_i) >0$ and  $0\ne (\alpha^2_i, \beta^2_i, \zeta^2_i) \ge 0$ for each $i=1, \ldots, n$, the matrix $A_{\mbox{c}}(\boldsymbol\alpha, \boldsymbol\beta, \boldsymbol\zeta, \tau)$ is Schur stable, i.e., its spectral radius is strictly less than 1.
\end{lemma}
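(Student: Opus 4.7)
My plan is to exploit the block diagonalization $\wt A = \wh E^T A_{\mbox{c}} \wh E = \diag(\wt A_1, \ldots, \wt A_n)$ constructed just before the lemma: since similarity preserves the spectrum, Schur stability of $A_{\mbox{c}}$ reduces to Schur stability of each $2\times 2$ block $\wt A_i$. For these, I would invoke the Jury (Schur--Cohn) criterion, which characterizes Schur stability of a real $2\times 2$ matrix $M$ by the three inequalities $1 - \mbox{tr}(M) + \det(M) > 0$, $1 + \mbox{tr}(M) + \det(M) > 0$, and $|\det(M)| < 1$. Writing $\wt{\mathbf K}_i = [k_{i,1}, k_{i,2}]$ and expanding from the given block form of $\wt A_i$, a direct computation yields $\mbox{tr}(\wt A_i) = 2 + \tfrac{\tau^2}{2}k_{i,1} + \tau k_{i,2}$ and $\det(\wt A_i) = 1 + \tau k_{i,2} - \tfrac{\tau^2}{2}k_{i,1}$, so the three Jury inequalities collapse to (i) $k_{i,1} < 0$, (ii) $\tau k_{i,2} > -2$, and (iii) $0 < \det(\wt A_i) < 1$ (once $\det(\wt A_i)>0$ is shown, the Jury lower bound $\det(\wt A_i)>-1$ is automatic).

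The next step is to substitute the closed-form expressions $k_{i,j} = -(H_{22} G_{1j} - H_{12} G_{2j})/\det \wt{\mathbf H}_i$ obtained by Cramer's rule from $\wt{\mathbf K}_i = -\mathbf e_1^T \wt{\mathbf H}_i^{-1} \wt{\mathbf G}_i$, where $H_{rs}$ and $G_{rs}$ are the entries of $\wt{\mathbf H}_i$ and $\wt{\mathbf G}_i$ listed explicitly above the lemma, and to note that $\det \wt{\mathbf H}_i > 0$ since $\wt{\mathbf H}_i$ is PD. The key algebraic simplification I would emphasize is the pair of identities $\tau G_{12} - \tfrac{\tau^2}{2} G_{11} = H_{11} - \tau^2 \zeta^1_i$ and $\tau G_{22} - \tfrac{\tau^2}{2} G_{21} = H_{12}$, which telescope the combination $1 + \tau k_{i,2} - \tfrac{\tau^2}{2} k_{i,1}$ into the compact form
\[
\det(\wt A_i) \, = \, \frac{\tau^2\, \zeta^1_i\, H_{22}}{\det \wt{\mathbf H}_i}.
\]
Since $\zeta^1_i>0$ and $H_{22} = \tau^2\bigl(\tfrac{\tau^2}{4}\alpha^2_i + \beta^2_i + \zeta^2_i\bigr) > 0$ under the hypothesis, this immediately delivers $\det(\wt A_i)>0$, and reduces the upper bound $\det(\wt A_i) < 1$ to verifying the scalar inequality $H_{22}(H_{11} - \tau^2 \zeta^1_i) > H_{12}^2$.

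Finally, I would verify the three polynomial inequalities, namely $H_{22} G_{11} - H_{12} G_{21} > 0$ for (i), $2\det \wt{\mathbf H}_i > \tau(H_{22} G_{12} - H_{12} G_{22})$ for (ii), and $H_{22}(H_{11} - \tau^2 \zeta^1_i) > H_{12}^2$ for (iii), by plugging in the explicit entries and expanding. In each case the dominant $\tau^4(\alpha^2_i)^2$ and $(\beta^2_i)^2$ terms cancel exactly, and every surviving monomial is a non-negative product of one factor from $(\alpha^1_i, \beta^1_i, \zeta^1_i)$ and one factor from $(\alpha^2_i, \beta^2_i, \zeta^2_i)$; the hypothesis $(\alpha^1_i, \beta^1_i, \zeta^1_i) > 0$ together with $(\alpha^2_i, \beta^2_i, \zeta^2_i) \neq 0$ then forces at least one such monomial to be strictly positive no matter which component of the second triple is nonzero. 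The main obstacle is purely algebraic bookkeeping: each expansion contains a dozen-plus monomials and the cancellations are exact, so the sign-tracking requires care, but no deeper idea beyond the telescoping identity above is needed.
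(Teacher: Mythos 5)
Your proposal is correct: the reduction to the $2\times 2$ blocks $\wt A_i$ is forced by the setup preceding the lemma and is common to both arguments, your trace/determinant formulas for $\wt A_i$ check out, the two telescoping identities $\tau G_{12}-\tfrac{\tau^2}{2}G_{11}=H_{11}-\tau^2\zeta^1_i$ and $\tau G_{22}-\tfrac{\tau^2}{2}G_{21}=H_{12}$ are verified by direct substitution, and they do yield $\det(\wt A_i)=\tau^2\zeta^1_i H_{22}/\det\wt{\mathbf H}_i$. Where you differ from the paper is the endgame. The paper does not invoke the Jury criterion directly; instead it computes $c_1,c_2$ explicitly, introduces the positive quantities $\alpha'=d_2\alpha^1_i+\alpha^2_i(2\beta^2_i+3\zeta^2_i)$, $\beta'$, $\gamma'=d_2\zeta^1_i$ with $d'=\tfrac{\tau^2}{4}\alpha'+\beta'+\gamma'$, and observes that $\wt A_i$ then has \emph{exactly} the algebraic form of the $p=1$ closed-loop block with $(\alpha',\beta',\gamma')$ playing the roles of $(\alpha_i,\beta_i,\zeta_i)$, so that Schur stability follows by citing \cite[Proposition 5.1]{GShenDu_TRB16}. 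That route buys a structural insight (the $p=2$ loop is a $p=1$ loop with effective weights) plus the eigenvalue location information of Proposition~\ref{prop:stability_closed_loop} for free; your route is self-contained and does not lean on the external reference, at the cost of verifying three polynomial inequalities by hand. One small inaccuracy in your bookkeeping description: it is not true that \emph{every} surviving monomial is a product of one factor from $(\alpha^1_i,\beta^1_i,\zeta^1_i)$ and one from $(\alpha^2_i,\beta^2_i,\zeta^2_i)$ --- for instance, condition (i) expands to $\tfrac{\tau^4}{2}\bigl[(\tfrac{\tau^2}{4}\alpha^2_i+\beta^2_i+\zeta^2_i)\alpha^1_i+2\alpha^2_i\beta^2_i+3\alpha^2_i\zeta^2_i\bigr]$, whose last two monomials live entirely in the second triple. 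This does not damage the argument, since those extra terms are nonnegative and the cross term $(\tfrac{\tau^2}{4}\alpha^2_i+\beta^2_i+\zeta^2_i)\alpha^1_i$ is already strictly positive under the hypotheses, but you should state the sign structure accurately when you write out the expansions.
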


\begin{proof}
 By the previous argument, it suffices to show that each $\wt A_i$ is Schur stable for $i=1, \ldots, n$. Fix an arbitrary $i$. Letting $\wt {\mathbf  K}_i =\begin{bmatrix} c_1 & c_2 \end{bmatrix}$, we have
%% \begin{linenomath}
 \[
    \wt A_i = \begin{bmatrix} 1 + \frac{\tau^2}{2} c_1 & \tau + \frac{\tau^2}{2} c_2 \\ \tau c_1 & 1+\tau c_2 \end{bmatrix},
 \]
%% \end{linenomath}
 where
 %%\begin{linenomath}
 \[
    c_1 = -  \frac{d_2 \alpha^1_i + \alpha^2_i( 2 \beta^2_i + 3 \zeta^2_i)}{2d'}, \qquad c_2 = -\frac{d_2 (\frac{\tau^2}{2} \alpha^1_i+ \beta^1_i) + \frac{3\tau^2}{2} \alpha^2_i \beta^2_i + \zeta^2_i( 3 \tau^2 \alpha^2_i + \beta^2_i)}{\tau d'},
 \]
 %%\end{linenomath}
 and $d':=\det(\wt{\mathbf H}_i)/\tau^4$, and $d_s = \frac{\tau^2}{4} \alpha^s_i + \beta^s_i + \zeta^s_i$ for $s=1, 2$. Hence, $d'=d_1 d_2 + \tau^2 \alpha^2_i(\beta^2_i + \frac{9}{4}\zeta^2_i)+\beta^2_i \zeta^2_i$.
  Define
  %%\begin{linenomath}
  \[
    \alpha':=d_2 \alpha^1_i + \alpha^2_i( 2 \beta^2_i + 3 \zeta^2_i),\  \quad \
    \beta':= d_2 \beta^1_i +\frac{\tau^2}{2} \alpha^2_i \beta^2_i + \zeta^2_i\Big( \frac{3}{2} \tau^2 \alpha^2_i + \beta^2_i \Big), \ \quad \
    \gamma':= d_2 \zeta^1_i.
  \]
  %%\end{linenomath}
%
  Clearly, $\alpha', \beta', \gamma'$ are all positive for any $\tau>0$, $(\alpha^1_i, \beta^1_i, \zeta^1_i) >0$ and  $0\ne (\alpha^2_i, \beta^2_i, \zeta^2_i) \ge 0$. Moreover, we deduce
  from a somewhat lengthy but straightforward calculation that $d'=\frac{\tau^2}{4} \alpha' + \beta'+ \gamma'>0$. Hence, $c_1 = - \frac{\alpha'}{2d'}$, $c_2 = -\frac{\tau^2 \alpha'/2 + \beta'}{\tau d'}$, and
 %% \begin{linenomath}
 \[
    \wt A_i = \begin{bmatrix} 1 - \frac{\alpha' \tau^2 }{4 d'}    & \tau \Big( 1 -  \big( \frac{  \alpha' \tau^2 }{4} + \frac{\beta'}{2} \big)\frac{1}{d'} \Big) \\
    -\frac{\alpha' \tau }{2d'}  & 1 -  \Big( \frac{\alpha' \tau^2 }{2}   + \beta' \Big) \frac{1}{d'} \end{bmatrix} \in \mathbb R^{2\times 2}.
 \]
 %%\end{linenomath}
 It follows from   \cite[Proposition 5.1]{GShenDu_TRB16} that $\wt A_i$ is Schur stable, and so is $A_{\mbox{c}}(\boldsymbol\alpha, \boldsymbol\beta, \boldsymbol\zeta, \tau)$.
\end{proof}

Using a similar technique but more lengthy calculations, it can be shown that when $p= 3$, the matrix $A(\boldsymbol\alpha, \boldsymbol\beta, \boldsymbol\zeta, \tau)$ is Schur stable for $\tau>0$, $(\alpha^1_i, \beta^1_i, \zeta^1_i) >0$,  $0\ne (\alpha^2_i, \beta^2_i, \zeta^2_i) \ge 0$ and $0\ne (\alpha^3_i, \beta^3_i, \zeta^3_i) \ge 0$ for each $i=1, \ldots, n$. For $p>4$, we expect that the same result holds (supported by numerical experience) although its proof becomes much more complicated. Nevertheless,
it is observed that in the $p$-horizon MPC, when the parameters $\alpha^s_i$, $\beta^s_i$ (and possibly including $\zeta^s_i$) with $s\ge 3$ are medium or large, large control inputs are generated, which causes control or speed saturation and may lead to undesired close-loop dynamics.
Motivated by this observation, we obtain the following stability result for small $(\alpha^s_i, \beta^s_i)\ge 0$ for $s=3, \ldots, p$.

\begin{proposition}
Let $p \ge 3$. For any $\tau>0$, $(\alpha^1_i, \beta^1_i, \zeta^1_i) >0$ and $0\ne (\alpha^2_i, \beta^2_i, \zeta^2_i) \ge 0$ for each $i=1, \ldots, n$, and $\zeta^s_i>0$ for $s=3, \ldots, p$ and $i=1, \ldots, n$, there exists a positive constant $\ol \varepsilon$ such that for any $\alpha^s_i, \beta^s_i \in [0, \ol\varepsilon)$ for $s=3, \ldots, p$ and $i=1, \ldots, n$,
the matrix $A_{\mbox{c}}(\boldsymbol\alpha, \boldsymbol\beta, \boldsymbol\zeta, \tau)$ is Schur stable.
\end{proposition}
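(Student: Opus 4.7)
The plan is a continuity argument built on the block-diagonalization of $A_{\mbox{c}}$ and on Lemma~\ref{lem:MPC_stability_p=2}. Recall that $\wh E^T A_{\mbox{c}} \wh E = \mbox{diag}(\wt A_1, \ldots, \wt A_n)$, so $A_{\mbox{c}}(\boldsymbol\alpha, \boldsymbol\beta, \boldsymbol\zeta, \tau)$ is Schur stable if and only if every $2\times 2$ block $\wt A_i$ is Schur stable. For each fixed $i$, I would hold $(\alpha^1_i, \beta^1_i, \zeta^1_i)$, $(\alpha^2_i, \beta^2_i, \zeta^2_i)$, and the $\zeta^s_i > 0$ for $s \ge 3$ as given data, and view the map $(\alpha^s_i, \beta^s_i)_{s=3, \ldots, p} \mapsto \wt A_i$ implicitly through $\wt A_i = \begin{bmatrix} 1 & \tau \\ 0 & 1 \end{bmatrix} + \begin{bmatrix} \tau^2/2 \\ \tau \end{bmatrix} \wt{\mathbf K}_i$ with $\wt{\mathbf K}_i = -\mathbf e_1^T \wt{\mathbf H}_i^{-1} \wt{\mathbf G}_i$.

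The heart of the argument is the analysis of the \emph{base point} where $\alpha^s_i = \beta^s_i = 0$ for every $s \ge 3$. Inspecting the definitions of $\breve\Hbld_{r,r'}$ displayed below (\ref{eqn:Hbld_matrix}) and of $\mathbf G_{r,1}, \mathbf G_{r,2}$ in (\ref{eqn:G_bld}), every term indexed by $s \ge 3$ in the defining sums carries an $\alpha^s_i$ or $\beta^s_i$ factor and therefore vanishes at this base point. Consequently $\wt{\mathbf H}_i$ becomes block diagonal: its top-left $2\times 2$ block agrees exactly with the $p=2$ matrix $\wt{\mathbf H}_i$ written just before Lemma~\ref{lem:MPC_stability_p=2}, and the remaining diagonal entries equal $\tau^2 \zeta^s_i > 0$ for $s = 3, \ldots, p$. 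Similarly, rows $3$ through $p$ of $\wt{\mathbf G}_i$ vanish identically while the top two rows reproduce the $p=2$ expression for $\wt{\mathbf G}_i$. Because block-diagonal inversion preserves the sparsity pattern, the quantity $\mathbf e_1^T \wt{\mathbf H}_i^{-1} \wt{\mathbf G}_i$ at the base point reduces precisely to the $p=2$ value of $\wt{\mathbf K}_i$, so $\wt A_i$ at the base point coincides with the $p=2$ closed-loop block. Lemma~\ref{lem:MPC_stability_p=2} then delivers Schur stability at the base point.

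To finish, I would invoke continuity. Since $\wt{\mathbf H}_i$ is positive definite at the base point and positive definiteness is an open condition, $\wt{\mathbf H}_i$ remains invertible, and the map $(\alpha^s_i, \beta^s_i)_{s \ge 3} \mapsto \wt A_i$ is continuous, on an open neighborhood of the base point. The set of Schur stable $2 \times 2$ real matrices is open in $\mathbb R^{2\times 2}$, since eigenvalues depend continuously on entries and $|\mu|<1$ is a strict inequality. Combining these facts yields $\ol\varepsilon_i > 0$ such that $\wt A_i$ remains Schur stable whenever $\alpha^s_i, \beta^s_i \in [0, \ol\varepsilon_i)$ for every $s = 3, \ldots, p$. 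Setting $\ol\varepsilon := \min_{i=1, \ldots, n} \ol\varepsilon_i > 0$, a minimum over a finite index set, gives the desired uniform constant and finishes the proof.

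The main obstacle I expect is the bookkeeping at the base point: one must verify that the structural collapse of $\wt{\mathbf H}_i$ and $\wt{\mathbf G}_i$ happens in precisely the right way, so that $\wt A_i$ at the base point is \emph{genuinely} the $p=2$ block rather than a superficially similar $2\times 2$ matrix contaminated by the $\tau^2 \zeta^s_i$ tails for $s \ge 3$. The key observation making this work is that $\mathbf e_1^T \wt{\mathbf H}_i^{-1}$ selects only the first row of $\wt{\mathbf H}_i^{-1}$, which under the block-diagonal structure at the base point depends solely on the inverse of the top-left $2\times 2$ block; the diagonal tails involving $\zeta^s_i$ never feed into $\wt{\mathbf K}_i$. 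Once this identity is pinned down, the remaining continuity/openness argument is standard, and because $n$ is finite no uniformity-in-$n$ estimate is needed.
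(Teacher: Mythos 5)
Your proposal is correct and follows essentially the same route as the paper's proof: evaluate at the base point $\alpha^s_i=\beta^s_i=0$ for $s\ge 3$, observe that $\wt{\mathbf H}_i$ and $\wt{\mathbf G}_i$ collapse so that $\wt{\mathbf K}_i$ reduces to the $p=2$ expression, invoke Lemma~\ref{lem:MPC_stability_p=2}, and conclude by continuity of the spectral radius. Your treatment of the openness of the Schur-stable set and the finite minimum over $i$ is slightly more explicit than the paper's, but the argument is the same.
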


\begin{proof}
Consider $p \ge 3$. Fix arbitrary $\tau>0$, $(\alpha^1_i, \beta^1_i, \zeta^1_i) >0$, and $0\ne (\alpha^2_i, \beta^2_i, \zeta^2_i) \ge 0$ for each $i=1, \ldots, n$ and $\zeta^s_i>0$ for $s=3, \ldots, p$ and $i=1, \ldots, n$. Suppose $\alpha^s_i=\beta^s_i=0$ for all $s=3, \ldots, p$ and $i=1, \ldots, n$. Then $Q_{z, s} =Q_{z', s}=0$ for all $s \ge 3$. Hence, $\Hbld_{i, j}=0$ for all $i \ge 3$ and any $j$. Thus it is easy to show that for each $i=1, \ldots, n$,
%%\begin{linenomath}
\[
   \wt{\mathbf H}_i = \begin{bmatrix} \wt{\mathbf H}^2_i & & & \\ & \tau^2 \zeta^3_i &  & \\ & & \ddots & \\ & & & \tau^2 \zeta^p_i \end{bmatrix} \in \mathbb R^{p\times p}, \qquad
   \wt{\mathbf G}_i = \begin{bmatrix} \wt{\mathbf G}^2_i  \\ 0 \\ \vdots \\ 0 \end{bmatrix} \in \mathbb R^{p \times 2},
\]
%%\end{linenomath}
where $\wt{\mathbf H}^2_i \in \mathbb R^{2\times 2}$ and $\wt{\mathbf G}^2_i \in \mathbb R^{2\times 2}$ correspond to $p=2$ given before. Hence,  $\wt { \mathbf  K}_i := - \mathbf e^T_1 \wt{\mathbf H}^{-1}_i \wt{\mathbf G}_i = - \mathbf e^T_1 (\wt{\mathbf H}^2_i)^{-1} \wt{\mathbf G}^2_i$. It follows from Lemma~\ref{lem:MPC_stability_p=2} that $A_{\mbox{c}}(\boldsymbol\alpha, \boldsymbol\beta, \boldsymbol\zeta, \tau)$ is Schur stable, i.e., its spectral radius is strictly less than 1. Since the spectral radius of $A_{\mbox{c}}(\boldsymbol\alpha, \boldsymbol\beta, \boldsymbol\zeta, \tau)$ is continuous in $\alpha^s_i, \beta^s_i$ for all $s=3, \ldots, p$ and $i=1, \ldots, n$, a small perturbation to $\alpha^s_i, \beta^s_i$ for all $s=3, \ldots, p$ and $i=1, \ldots, n$ still leads to the Schur stable matrix $A_{\mbox{c}}$. This yields the desired result.
\end{proof}

Based on the above results, one may choose $Q_{z, s}, Q_{z', s}, Q_{w, s}$ in the following way.
Let $u_s, v_s  \in \mathbb R^n_+$ and $w_s \in \mathbb R^n_{++}$ be  positive or nonnegative vectors of the same order.
Let $\eta>1$ (e.g., $\eta=5$ or higher) be a constant and let $\kappa_z, \kappa_{z'}$ and $\kappa_w$ be some positive constants. Then let
%%\begin{linenomath}
\[
   Q_{z, s} = \frac{\kappa_z}{(s-1)^\eta} \mbox{diag}(u_s), \quad
   Q_{z', s} = \frac{\kappa_{z'}}{(s-1)^\eta} \mbox{diag}(v_s), \quad
   Q_{w, s} = \frac{\kappa_{w} }{(s-1)^\eta}\mbox{diag}(w_s), \qquad s=2, \ldots, p.
\]
%%\end{linenomath}

%---------------------------------------------------
%
\section{Numerical Results} \label{sect:numerical_results}

\subsection{Numerical Experiments and Weight Matrices Design} \label{subsect:Numerical_test_description}

We conduct numerical tests to evaluate the performance of the proposed fully distributed schemes and the CAV platooning control. In these tests, we consider a platoon of an uncontrolled leading vehicle labeled by the index 0 and ten (i.e., $n=10$) CAVs following the leading vehicle. The following physical parameters are used for the CAVs and their constraints throughout this section unless otherwise stated: the desired spacing $\Delta = 50 m$, the vehicle length $L=5 m$, the sample time $\tau = 1 s$, the reaction time $r=\tau=1 s$, the acceleration and deceleration limits $a_{\max}= 1.35m/s^2$ and $a_{\min}=-8 m/s^2$, and the speed limits  $v_{\max}=27.78 m/s$ and $v_{\min}=10 m/s$. The initial state of the platoon is $z(0)=z'(0)=0$ and $v_i(0)=25 m/s$ for all $i=0, 1, \ldots, n$. Further, the vehicle communication network is given by the cyclic-like graph, i.e., the  bidirectional edges of the graph are $(1, 2), (2, 3), \ldots, (n-1, n) \in \mathcal E$.

When $n=10$, a particular choice of these weight matrices is given as follows: for $p=1$,
%%\begin{linenomath}
\begin{eqnarray*}
\alphabf^1 & = & \big( 38.85, 40.2, 41.55,   42.90,   44.25,   45.60,  46.95, 48.30,   49.65, 51.00 \big):= \wt \alphabf, \\
\betabf^1 & = & \big( 130.61,   136.21,   141.82,   147.42,   153.03,   158.64,   164.24,   169.85,  175.46,  181.06 \big) :=\wt \betabf, \\
\zetabf^1 & = & \big( 62,    74,    90,    92,   106,   194,   298,   402,   454,   480 \big):=\wt \zetabf.
\end{eqnarray*}
%%\end{linenomath}
For $p \ge 2$, we choose $\alphabf^1 = \wt\alphabf - \mathbf 1$, $\betabf^1 = \wt \betabf - \mathbf 1$, $\zetabf^1 = \wt\zetabf - \mathbf 1$, and
%%%\begin{linenomath}
\[
   \alphabf^s = \frac{0.0228}{(s-1)^4} \times \wt \alphabf, \quad
   \betabf^s = \frac{0.044}{(s-1)^4} \times \wt \betabf, \quad
   \zetabf^s = \frac{0.0026}{(s-1)^4} \times \wt \zetabf, \quad s=2, \ldots, p.
\]
The above vectors $\alphabf^s, \betabf^s, \zetabf^s$ define the weight matrices $Q_{z, s}, Q_{z', s}, Q_{w, s}$ for $s=1, \ldots, 5$, which further yield the closed loop dynamics matrix $A_{\mbox{c}}$; see the discussions below (\ref{eqn:K_mat_d_vec}).
It is shown that when these weights are used, the spectral radius of $A_{\mbox{c}}$ is $0.8498$ for $p=1$, and $0.8376$ for $p=2, \ldots, 5$, respectively.

\gap

\noindent{\bf Discussion on the selection of MPC horizon}. We discuss the choice of the MPC prediction horizon $p$ based on numerical tests as follows. Our numerical experience shows that for $p>1$, the weight matrices $Q_{z, 1}, Q_{z', 1}$ and $Q_{w, 1}$ play a more important role for the closed loop dynamics. For fixed $Q_{z, 1}, Q_{z', 1}$ and $Q_{w, 1}$ with the large penalties in $Q_{z, s}, Q_{z', s}$ and $Q_{w, s}$ for $s>1$, the closed loop dynamics may be mildly improved but at the expense of undesired large control. Hence, we choose smaller penalties in $Q_{z, s}, Q_{z', s}$ and $Q_{w, s}$ for $s>1$, which only lead to slightly better closed loop performance compared with the case of $p=1$. Further, when a large $p$ is used, the underlying optimization problem has a larger size, resulting in longer computation time and slow convergence of the proposed distributed scheme. Besides, the current MPC model assumes that the future $u_0(k+s)=u_0(k)$ for all $s=1, \ldots, p-1$ at each $k$. This assumption is invalid when the true $u_0(k+s)$ is substantially different from $u_0(k)$, which implies that the prediction performance is poor for a large $p$.
For these reasons, it is recommended that a smaller $p$ be used, for example, $p \le 5$.

\gap

The following scenarios  are used to evaluate the proposed CAV platooning control.

\noindent $\bullet$ {\bf Scenario 1}: The leading vehicle performs instantaneous deceleration/acceleration and then keeps a constant speed for a while. The goal of this scenario is to test if the platoon can maintain stable spacing and speed when the leading vehicle is subject to acceleration or deceleration disturbances. The motion profile of the leading vehicle is as follows: the leading vehicle decelerates from $k=51s$ to $k=54s$ with the deceleration $-2m/s^2$, and maintains a constant speed till $k=100s$. After $k=100s$, it restores to its original speed $25m/s$ with the acceleration $1m/s^2$.

\gap

\noindent $\bullet$ {\bf Scenario 2}: The leading vehicle performs periodical acceleration/deceleration. The goal of this scenario is to test whether the proposed control scheme can reduce periodical spacing and speed fluctuation. The motion profile of the leading vehicle in this scenario is as follows: the leading vehicle periodically changes its acceleration and deceleration from $k=51s$ to $k=100s$ with the period $T=4s$ and acceleration/deceleration $\pm 1m/s^2$. Then  it maintains its original constant speed $25m/s$ after $k=100 s$.

\gap

\noindent $\bullet$ {\bf Scenario 3}:  In this scenario, we aim to test the performance of the proposed control scheme in a real traffic environment, particularly when the leading vehicle undergoes traffic oscillations. We use real world trajectory data from an oscillating traffic flow to generate the leading vehicle's motion profile. Specifically, we consider NGSIM data on eastbound I-80 in San Francisco Bay area in California.
%
%from 5:00 pm to 5:15 pm on April 13, 2005.
%
We use the data of position and speed of a real vehicle to generate its control input at each second and treat this vehicle as a leading vehicle. Since the maximum of acceleration of this vehicle is close to $2 m/s^2$, we choose
$a_{\max}= 2 m/s^2$. All the other parameters or physical limits remain the same.
The experiment setup of this scenario is:  $z_i(0) = 0 m$, $v_i(0) = 25 m/s$ for each $i$, and the time length is $45 s$.
To further test the proposed CAV platooning control in a more realistic traffic setting in Scenario 3, random noise is added to each CAV to simulate dynamical disturbances, model mismatch,  signal noise, communication delay, and road condition perturbations. In particular, at each $k$, the random noise with the normal distribution $\mathcal N(0.04, 0)$ is added to the first CAV, and the noise with the normal distribution $\mathcal N(0.02, 0)$ is added to each of the rest of the CAVs. Here a larger noise is imposed to the first CAV since there are more noises and disturbances between the leading vehicle and the first CAV.

%------------------------------------------------------
%
\subsection{Performance of Fully Distributed Schemes and CAV Platooning Control} \label{subsect:test_performance}

The generalized Douglas-Rachford splitting method based distributed algorithm (i.e., Algorithm~\ref{algo:DR_distributed_splitting}) is tested. For each MPC horizon $p$,  the parameters $\alpha$, $\rho$, and the error tolerance for the stopping criteria in this algorithm are chosen  to achieve desired numerical accuracy and efficiency; see the discussions below (\ref{eqn:DR_scheme}) for error tolerances and Table~\ref{Table0} for a list of these parameters and error tolerances.
In particular, we choose a larger error tolerance for a larger $p$ to meet the desired computation time requirement of one second per vehicle.
For comparison, we also test the three operator splitting based distributed scheme and its accelerated version given in Remark~\ref{remark:other_schemes}, where we
 choose $\delta_i = \lambda_{\min}(\grave W^i)/2$, $\gamma=1.9/{\wh L}$ and $\lambda =1.05$. Here $\wh L$ is the Lipschitz constant defined in Remark~\ref{remark:other_schemes}. For
the accelerated scheme, we let $\eta = 0.2$ and $\gamma_0 = 1.9/(0.8\times\wh L)$.

\begin{table}
\caption{Parameters in Algorithm~\ref{algo:DR_distributed_splitting} for different MPC horizon $p$'s}
\newcommand{\tabincell}[2]{\begin{tabular}{@{}#1@{}}#2\end{tabular}}
\begin{center}
\begin{tabular}{|c|c|c|c|c|c|}
\hline
%
%\multirow{2}{*}{MPC horizon} &
%\multicolumn{2}{c|}{Computation time per CAV (s)} &
%\multicolumn{2}{c|}{Relative numer. error}\\
%\cline{2-5}
%   & Mean & Variance & \ \ Mean \ \ & Variance \\
%\hline
%
MPC horizon & \ \ $p=1$ \ \ & $p=2$ & $p=3$ & $p=4$ & $p=5$ \\
\hline

$\alpha$ &  0.95   &  0.95   &  0.95  &  0.8   &  0.8   \\
\hline
$\rho$ & 0.3 & 0.3 & 0.3 & 0.1 & 0.1 \\
\hline
Error tolerance & \ $10^{-3}$ \ & $2\times 10^{-3}$ & $5\times 10^{-3}$ & $7\times 10^{-3}$ & $1.25\times 10^{-2}$ \\
\hline
\end{tabular}
\end{center}
\label{Table0}
\end{table}

\gap

\noindent {\bf Initial guess warm-up}. \ For a given $p$, the augmented locally coupled optimization problem (\ref{eqn:opt_lin_slitting}) has nearly $3np$ scalar variables and $3np$ scalar constraints when the cyclic-like network topology is considered. These sizes can be even larger for other network topologies satisfying the assumption {\bf A1}.
 Hence, when $p$ is large, the underlying optimization problem is of large size, which may affect the numerical performance of the distributed schemes.
Several techniques are developed to improve the efficiency of the proposed Douglas-Rachford distributed schemes for real-time computation, particularly for a large $p$. For illustration, we discuss the initial guess warm-up technique as follows.
When implementing the proposed scheme, we often choose a numerical solution obtained from the last step as an initial guess for the current step and run the proposed Douglas-Rachford  scheme. Such the choice of an initial guess usually works well when two neighboring control solutions are relatively close. However,  it is observed that the convergence of the proposed distributed scheme is sensitive to an initial guess, especially when the CAV platoon is subject to significant traffic oscillations, which results in highly different control solutions between two neighboring instants. In this case, using a neighboring control solution as an initial guess leads to rather slow convergence.
To solve this problem, we propose an initial guess warm-up technique, motivated by the fact that control solutions are usually unconstrained for most of $k$'s. Specifically,
we first compute an unconstrained solution in a fully distributed manner, which can be realized by setting $\mathcal P_i$ as the Euclidean space in Algorithm~\ref{algo:DR_distributed_splitting}.
This step can be efficiently computed since the proximal operator is formulated by an unconstrained quadratic program and has a closed form solution. In fact, letting $J_i(\wh{\mathbf u}_i) = \frac{1}{2} \wh{\mathbf u}^T_i \wh W_i \wh{\mathbf u}_i + c^T_{\Ical_i} \wh{\mathbf u}_i$, the closed form solution to the proximal operator is given by $\mbox{Prox}_{\rho J_i}(\wh{\mathbf u}'_i) = - (\rho \wh W_i + I)^{-1} (\rho c_{\Ical_i} - \wh{\mathbf u}'_i)$, where $\wh W_i$ is PD.
We then project this unconstrained solution onto the constrained set in one step. Due to the decoupled structure of the problem (\ref{eqn:opt_lin_slitting}), this one-step projection can be computed in a fully distributed manner. We thus use this projected solution as an initial guess for the Douglas-Rachford scheme. Numerical experience shows that this new initial guess significantly improves computation time and solution quality when $p$ is large. %%; details are given below.

\begin{table}
\caption{Scenario 1: computation time and numerical accuracy}
\newcommand{\tabincell}[2]{\begin{tabular}{@{}#1@{}}#2\end{tabular}}
\begin{center}
\begin{tabular}{|c|c|c|c|c|}
\hline
\multirow{2}{*}{MPC horizon} &
\multicolumn{2}{c|}{Computation time per CAV (s)} &
\multicolumn{2}{c|}{Relative numer. error}\\
\cline{2-5}
   & \ \ Mean \ \ & Variance & \ \ Mean \ \ & \ Variance \ \\
\hline
$p=1$ &   0.0248   & 0.0017 & \ $3.4\times 10^{-4}$ \ & \ $1.9\times 10^{-7}$ \ \\
\hline
$p=2$ & 0.0603 & 0.0034 & $1.5\times 10^{-3}$ & $2.6\times 10^{-6}$\\
\hline
$p=3$ & 0.1596 & 0.0764 & $3.2\times 10^{-3}$ & $1.1\times 10^{-5}$ \\
\hline
$p=4$ & 0.1528 & 0.1500  & $4.0\times 10^{-3}$ & $1.7\times 10^{-5}$ \\
\hline
$p=5$ & 0.2365 &  0.2830 & $6.6\times 10^{-3}$ & $5.7\times 10^{-5}$ \\
\hline
%
%3 & 0.0047 & 0.000390 & 109.34 \ & 1.1334\\
%\hline
\end{tabular}
\end{center}
\label{Table1}
\end{table}

%\gap

\begin{table}
\caption{Scenario 2: computation time and numerical accuracy}
\newcommand{\tabincell}[2]{\begin{tabular}{@{}#1@{}}#2\end{tabular}}
\begin{center}
\begin{tabular}{|c|c|c|c|c|}
\hline
\multirow{2}{*}{MPC horizon} &
\multicolumn{2}{c|}{Computation time per CAV (s)} &
\multicolumn{2}{c|}{Relative numer. error}\\
\cline{2-5}
   & \ \ Mean \ \ & Variance & \ \ Mean \ \ & Variance \\
\hline
$p=1$ & \ \ 0.0464 \ \ & 0.0039 & \ $4.0\times 10^{-4}$ \ & \ $1.9\times 10^{-7}$ \ \\
\hline
$p=2$ & 0.1086 & 0.0153 & $1.1 \times 10^{-3}$ & $1.4\times 10^{-6}$\\
\hline
$p=3$ & 0.3296 & $0.2593$ & $3.2 \times 10^{-3}$ & $1.13\times 10^{-5}$ \\
\hline
$p=4$ & 0.5049 & 0.6257 & $5.9 \times 10^{-3}$ & $4.6\times 10^{-5}$ \\
\hline
$p=5$ & 0.5784 & 0.7981 & $1.13 \times 10^{-2}$ & $1.3\times 10^{-5}$ \\
\hline
\end{tabular}
\end{center}
\label{Table2}
\end{table}

\begin{table}
\caption{Scenario 3: computation time and numerical accuracy}
\newcommand{\tabincell}[2]{\begin{tabular}{@{}#1@{}}#2\end{tabular}}
\begin{center}
\begin{tabular}{|c|c|c|c|c|}
\hline
\multirow{2}{*}{MPC horizon} &
\multicolumn{2}{c|}{Computation time per CAV (s)} &
\multicolumn{2}{c|}{Relative numer. error}\\
\cline{2-5}
   & \ \ Mean \ \  & Variance & \ \ Mean \ \ & Variance \\
\hline
$p=1$ &  0.0825  & 0.0023 & \ $1.30\times 10^{-3}$ \ & \ $3.5\times 10^{-6}$ \ \\
\hline
$p=2$ & 0.2011 & 0.0051 & $7.5 \times 10^{-3}$ & $1.6\times 10^{-4}$\\
\hline
$p=3$ & 0.5830 & 0.3462 & $1.20 \times 10^{-2}$ & $4.2\times 10^{-4}$ \\
\hline
$p=4$ & 0.8904 & 0.4685 & $1.69 \times 10^{-2}$ & $3.3\times 10^{-4}$ \\
\hline
$p=5$ & 0.9967 & 0.7467 & $3.25 \times 10^{-2}$ & $1.3\times 10^{-4}$ \\
\hline
\end{tabular}
\end{center}
\label{Table3}
\end{table}

\begin{table}
\caption{Scenario 3: computation time and numerical accuracy with initial guess warm-up}
\newcommand{\tabincell}[2]{\begin{tabular}{@{}#1@{}}#2\end{tabular}}
\begin{center}
\begin{tabular}{|c|c|c|c|c|}
\hline
\multirow{2}{*}{MPC horizon} &
\multicolumn{2}{c|}{Computation time per CAV (s)} &
\multicolumn{2}{c|}{Relative numer. error}\\
\cline{2-5}
   &  Mean & Variance & \ \ Mean \ \ & Variance \\
\hline
$p=1$ & \ \ 0.0243 \ \ & 0.0023 & \ $5.0\times 10^{-4}$ \ & \ $7.0\times 10^{-7}$ \ \\
\hline
$p=2$ & 0.0097 & $0.0017$ & $2.6 \times 10^{-3}$ & $1.6\times 10^{-5}$\\
\hline
$p=3$ & 0.0579 & $0.0253$ & $2.2 \times 10^{-3}$ & $1.1\times 10^{-5}$ \\
\hline
$p=4$ & 0.1063 & 0.1103 & $3.7 \times 10^{-3}$ & $2.4\times 10^{-5}$ \\
\hline
$p=5$ & 0.1258 & 0.1155 & $8.5 \times 10^{-3}$ & $1.5\times 10^{-5}$ \\
\hline
\end{tabular}
\end{center}
\label{Table4}
\end{table}

\gap

\noindent{\bf Performance of distributed schemes}.
Distributed algorithms are implemented on MATLAB and run on a computer of the following processor with 4 cores: Intel(R) Core(TM) i7-8550U CPU @ $1.80 GHz$ and RAM: $16.0 GB$.
We test the fully distributed  Algorithm~\ref{algo:DR_distributed_splitting} for Scenarios 1-3.
At each $k \in \mathbb N$, we use the optimal solution obtained from the last step as an initial guess unless otherwise stated.
To evaluate the numerical accuracy of the proposed distributed scheme, we compute the relative error between the numerical solution from the distributed scheme and that from a high precision centralized scheme when the latter solution, labeled as the true solution, is nonzero.
The mean and variance of computation time per vehicle  and relative errors for different MPC horizon $p$'s in noise-free Scenarios 1-3  are displayed in Table~\ref{Table1}-\ref{Table3}, respectively. The numerical performance for Scenario 3 under noises is similar to that without noise; it is thus omitted due to space limit.

It is observed from the numerical results that when the MPC horizon $p$ increases, more computation time is needed with mildly deteriorating numerical accuracy. This observation agrees with the discussion on the choice of $p$ given in Section~\ref{subsect:Numerical_test_description}, which suggests a relatively small $p$ for practical computation.
Besides, we have tested the proposed initial guess warm-up technique on Scenario 3 for different $p$'s using the same parameters and error tolerances for Algorithm~\ref{algo:DR_distributed_splitting}; see Table~\ref{Table0}. To compute a warm-up initial guess using an iterative distributed scheme, we use the same $\alpha$ and $\rho$ for each $p$ with error tolerance $5\times 10^{-4}$ for $p=1$ and $10^{-3}$ for the other $p$'s.
 A summary of  the numerical results is shown in Table~\ref{Table4}. Compared with the results given in Table~\ref{Table3} without initial guess warm-up, the averaging computation time is reduced by at least $80\%$ and the relative numerical error is reduced by at least two thirds for $p \ge 2$ when the initial guess warm-up is used. This shows that the initial guess warm-up technique considerably improves the numerical efficiency and accuracy, and it is especially suitable for real-time computation when a large $p$ is used. Hence, we conclude that Algorithm~\ref{algo:DR_distributed_splitting}, together with the initial guess warm-up technique, is suitable for real-time computation with satisfactory numerical precision.

We have also tested the three-operator splitting based distributed scheme and its accelerated version given in Remark~\ref{remark:other_schemes}. These schemes provide satisfactory computation time and numerical accuracy when $p$ is small. For example, when $p=1$, the mean of computation time per CAV is 0.0553 seconds with the variance 0.0284 for Scenario 1 and 0.219 seconds with the variance 0.138 for Scenario 2, respectively.
 However, for a slightly large $p$, e.g., $p\ge 3$, it takes much longer than 1 second for an individual CAV to complete computation. These schemes are thus not suitable for real-time computation for $p \ge 3$.

%%%%%%%%%%%%%%%%%%%%%%%%%%%%%%%%%

\begin{figure}[htbp]
\centering
\subfigure[Time history of spacing changes.]{ \label{S1_p1_spacing}
\includegraphics[width=0.48\columnwidth]{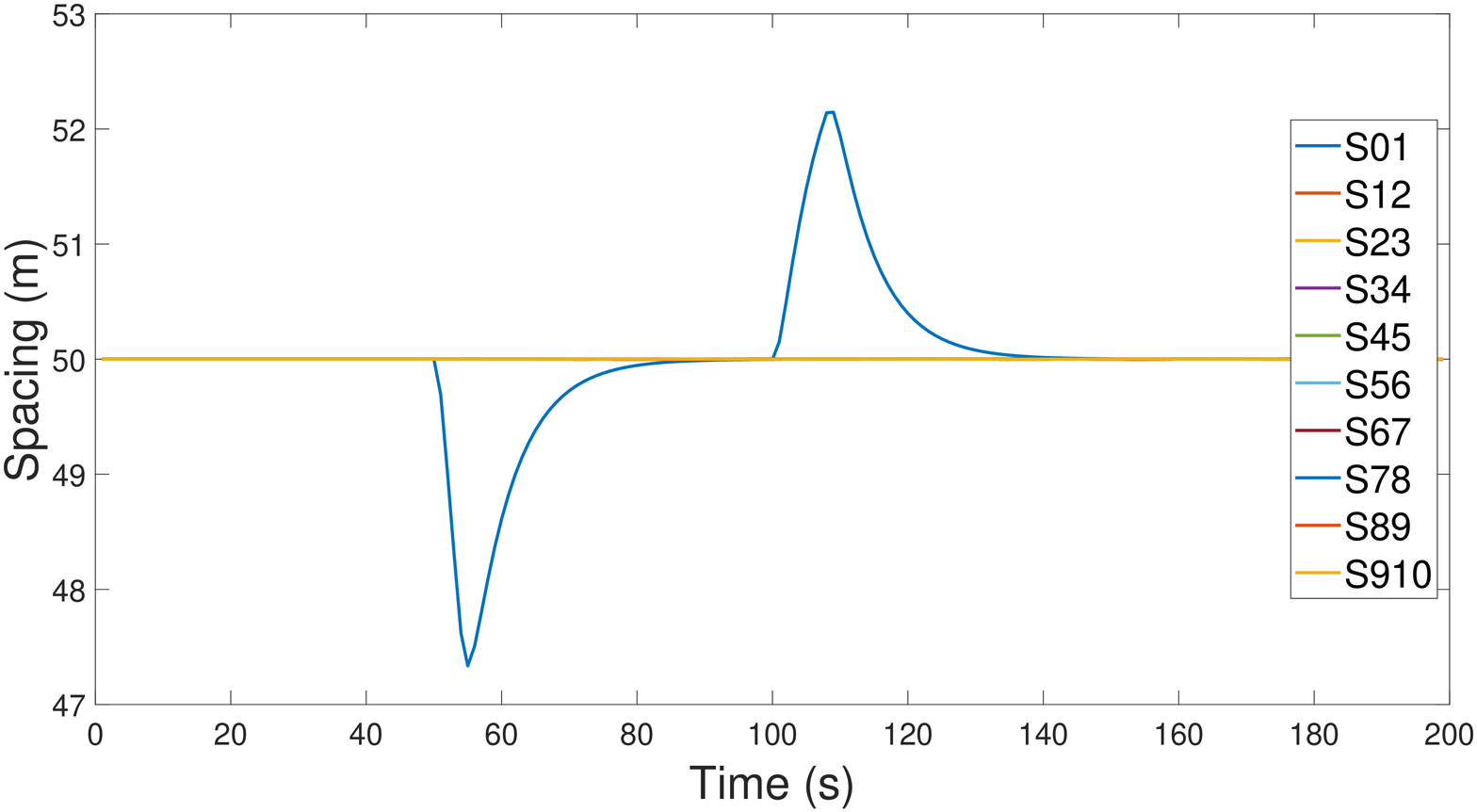}
}
\subfigure[Time history of spacing changes.] { \label{S1_p5_spacing}
\includegraphics[width=0.48\columnwidth]{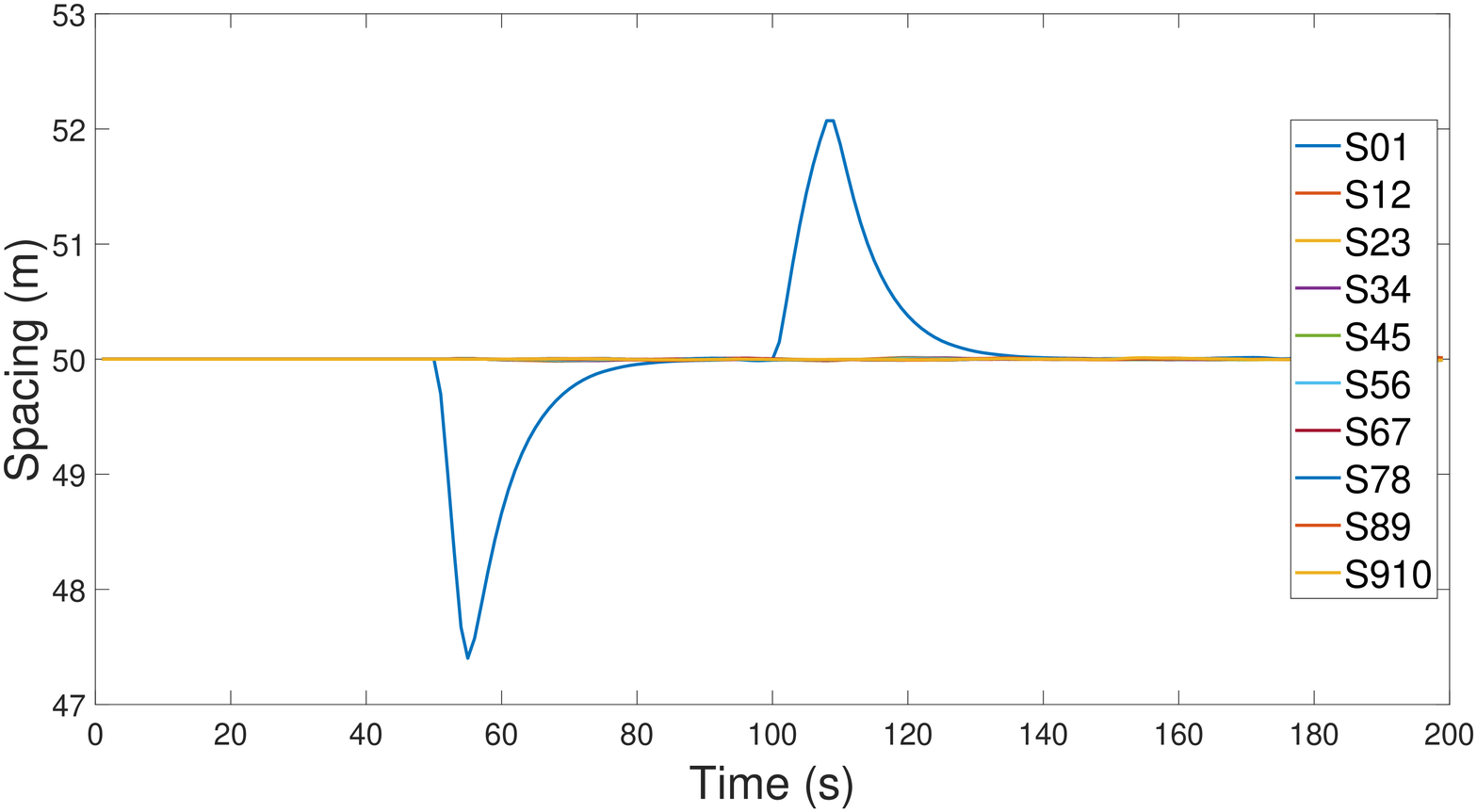}
}
\subfigure[Time history of vehicle speed. ]{ \label{S1_p1_speed}
\includegraphics[width=0.48\columnwidth]{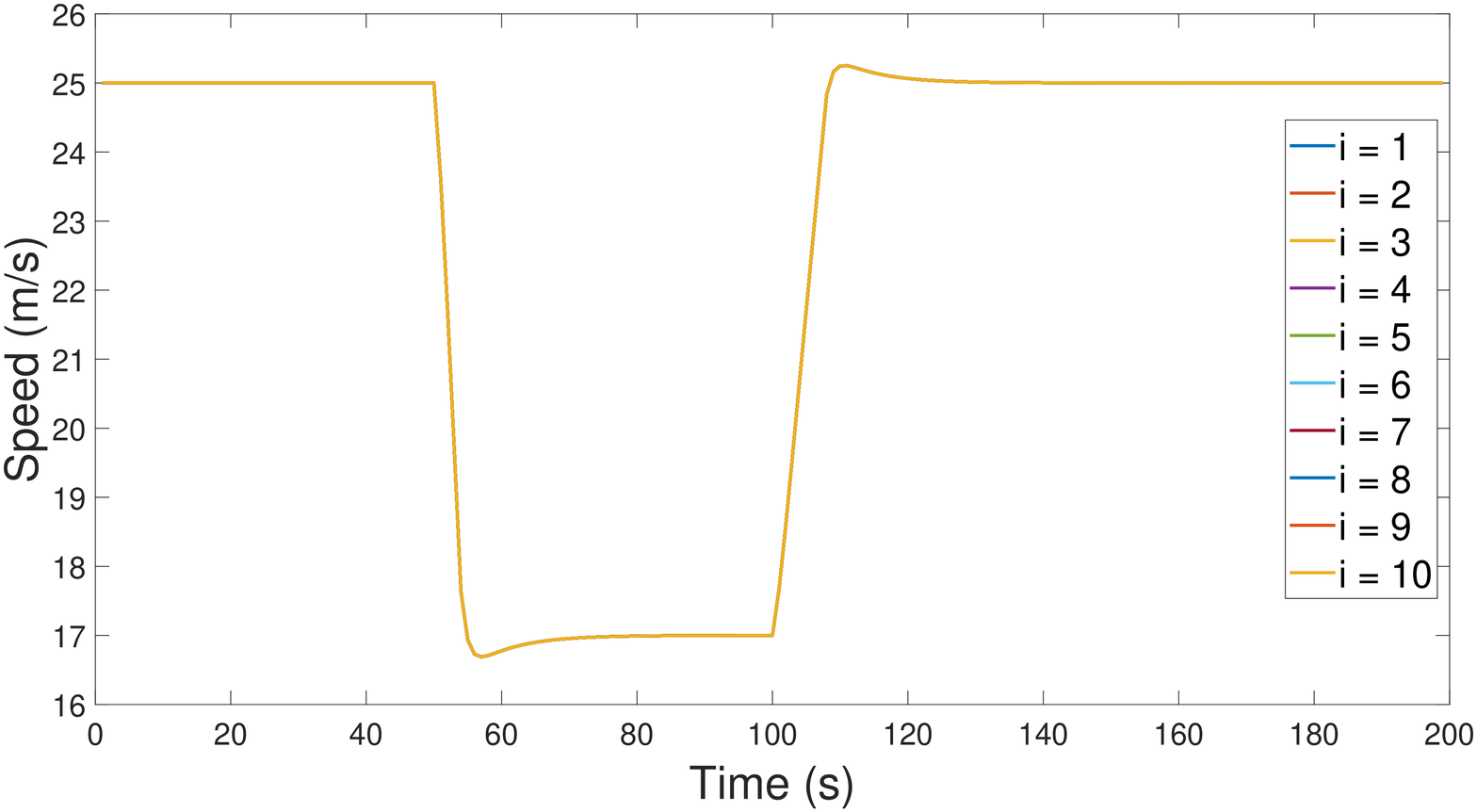}
}
\subfigure[Time history of vehicle speed.] { \label{E11}
\includegraphics[width=0.48\columnwidth]{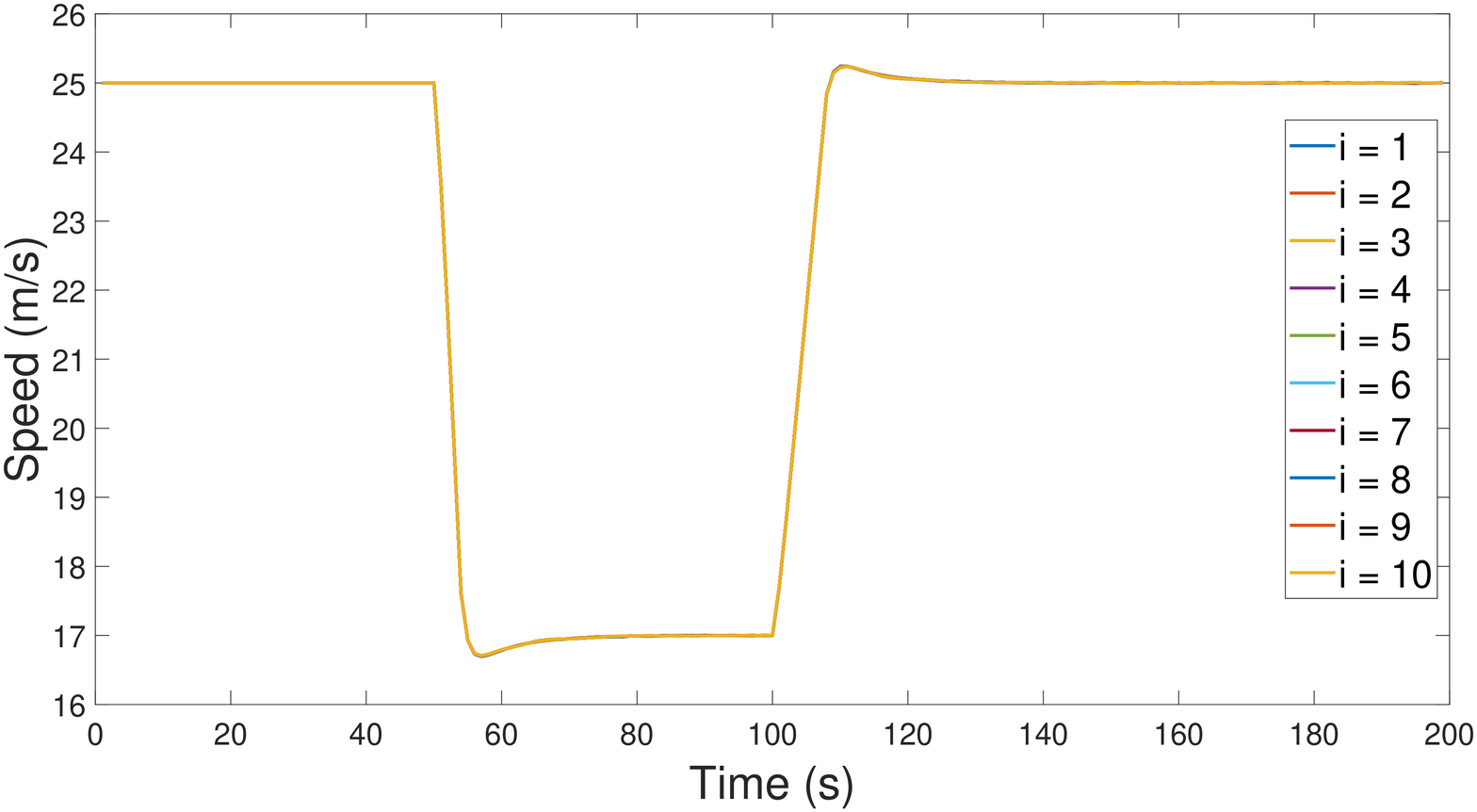}
}
\subfigure[Time history of control input.]{ \label{E8AD}
\includegraphics[width=0.48\columnwidth]{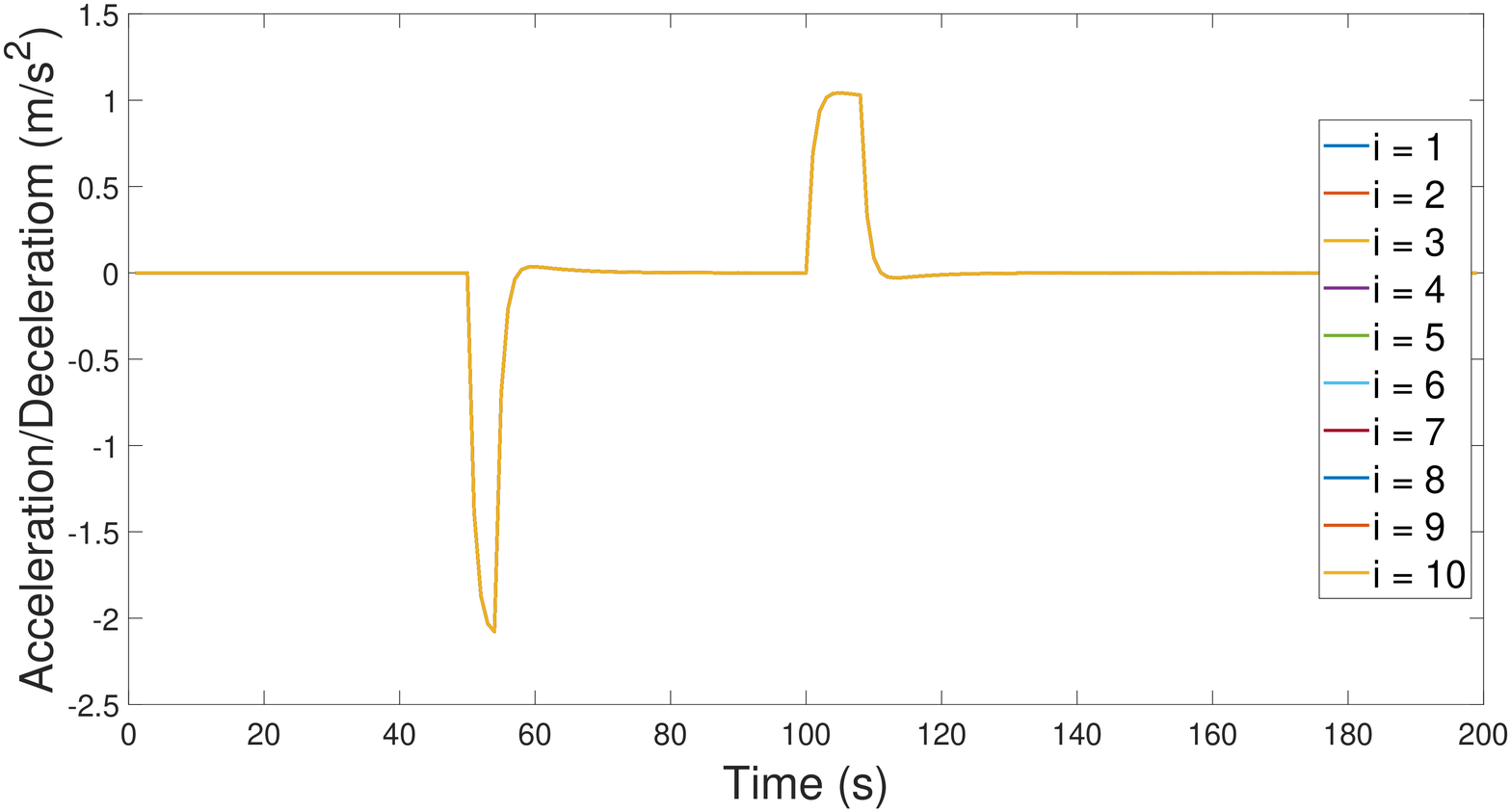}
}
\subfigure[Time history of control input] { \label{E11AD}
\includegraphics[width=0.48\columnwidth]{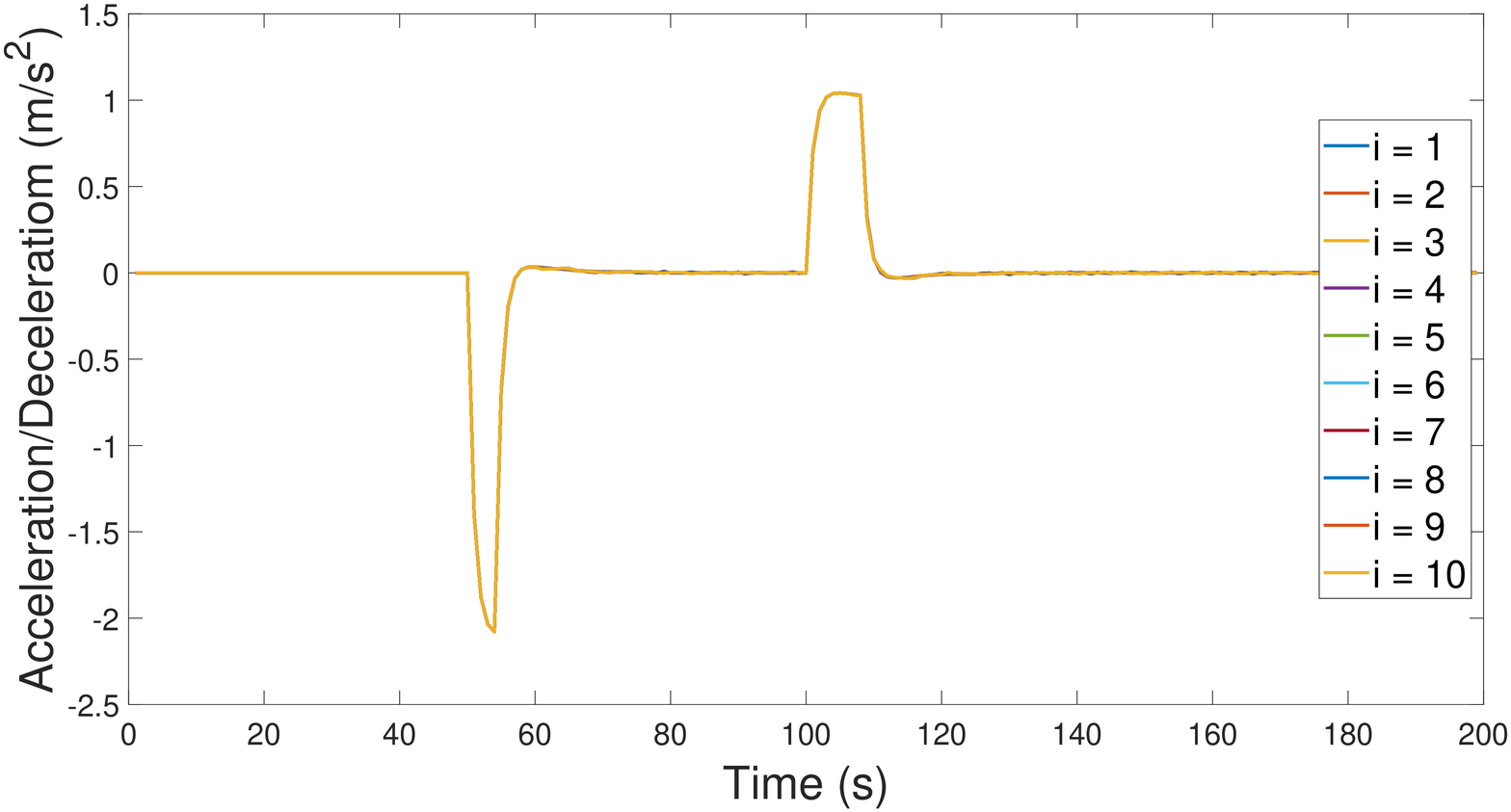}
}
%
%\subfigure[DFT spectrum of detrended  speed for $i=0$, $1$ and $9$.]{ \label{E9}
%\includegraphics[width=0.48\columnwidth]{E9.eps}
%}
%\subfigure[DFT spectrum of detrended speed for $i=0$, $1$ and $9$.] { \label{E12}
%\includegraphics[width=0.48\columnwidth]{E12.eps}
%}
\caption{Scenario 1: the proposed CAV platooning control with $p=1$ (left column) and $p=5$ (right column).}
\label{Fig:S1}
\end{figure}

%%%%%%%%%%%%%%%%%%%%%%%%%%%%%%%%%%%%%%%%%%%%%%%%%%%%%%%%%%%%%%%%%%%%

\begin{figure}[htbp]
\centering
\subfigure[Time history of spacing changes.]{ %\label{S1_p1_spacing}
\includegraphics[width=0.48\columnwidth]{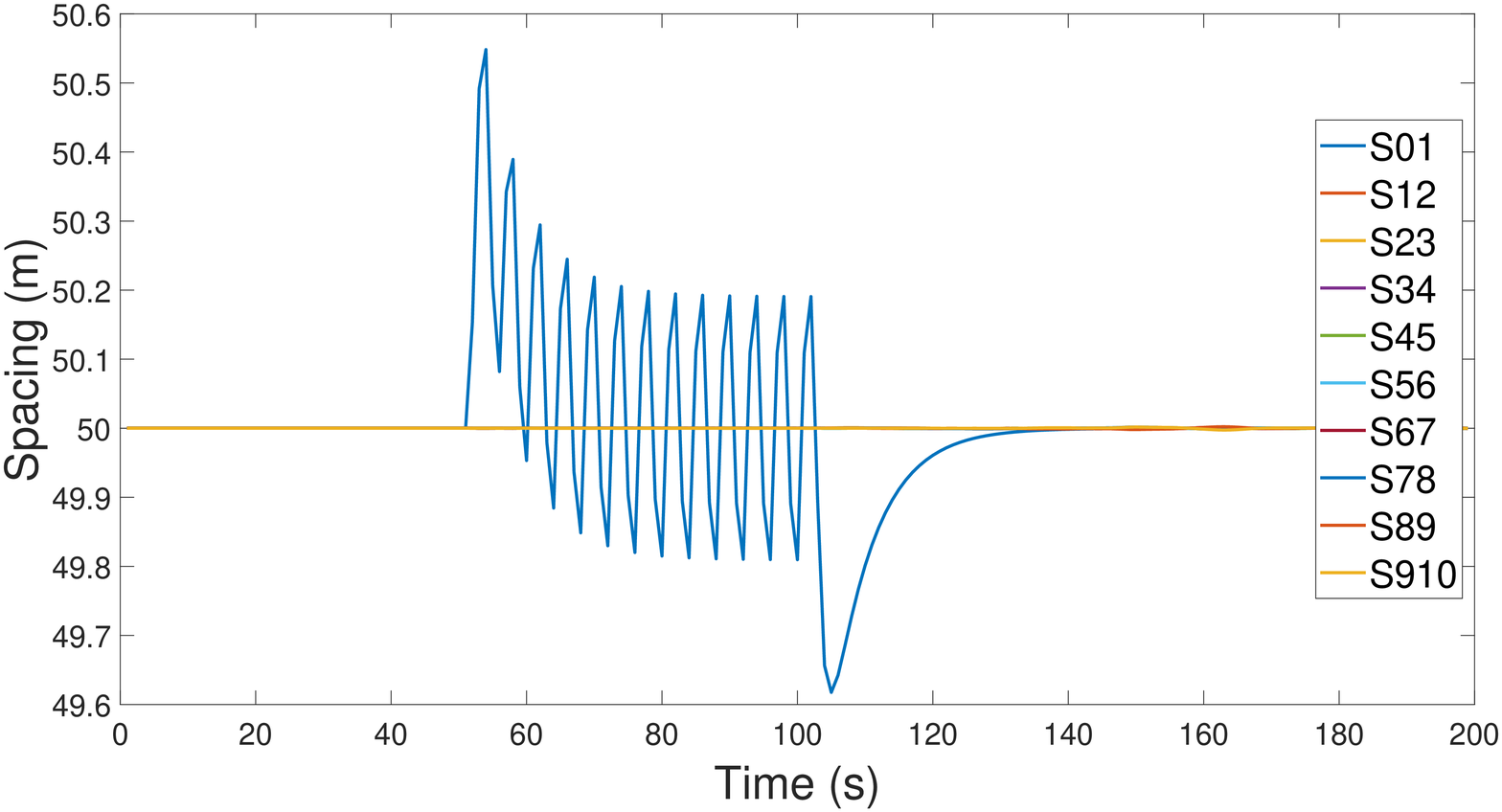}
}
\subfigure[Time history of spacing changes.] { %\label{S1_p5_spacing}
\includegraphics[width=0.48\columnwidth]{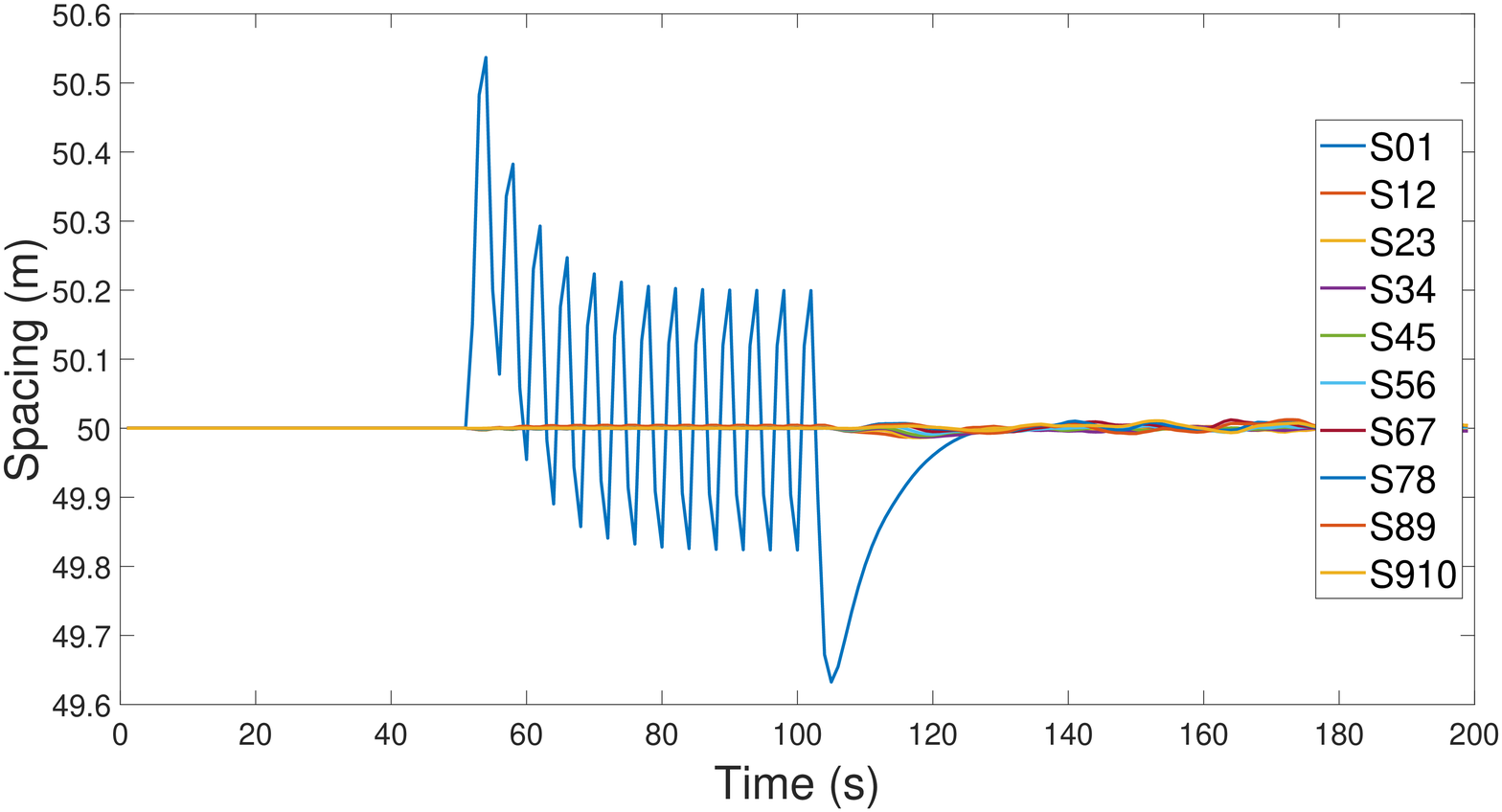}
}
\subfigure[Time history of vehicle speed. ]{ %\label{S1_p1_speed}
\includegraphics[width=0.48\columnwidth]{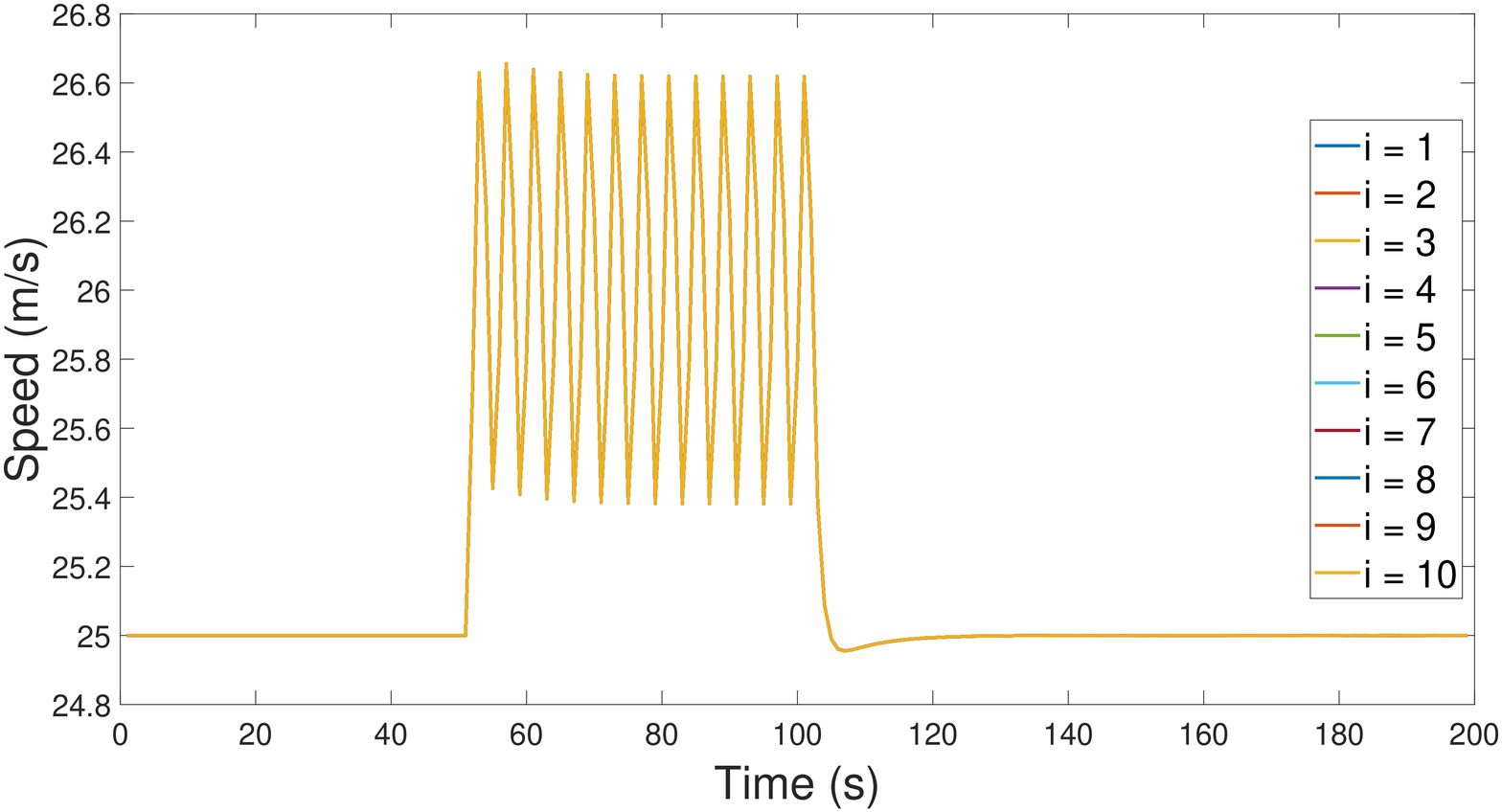}
}
\subfigure[Time history of vehicle speed.] { %\label{E11}
\includegraphics[width=0.48\columnwidth]{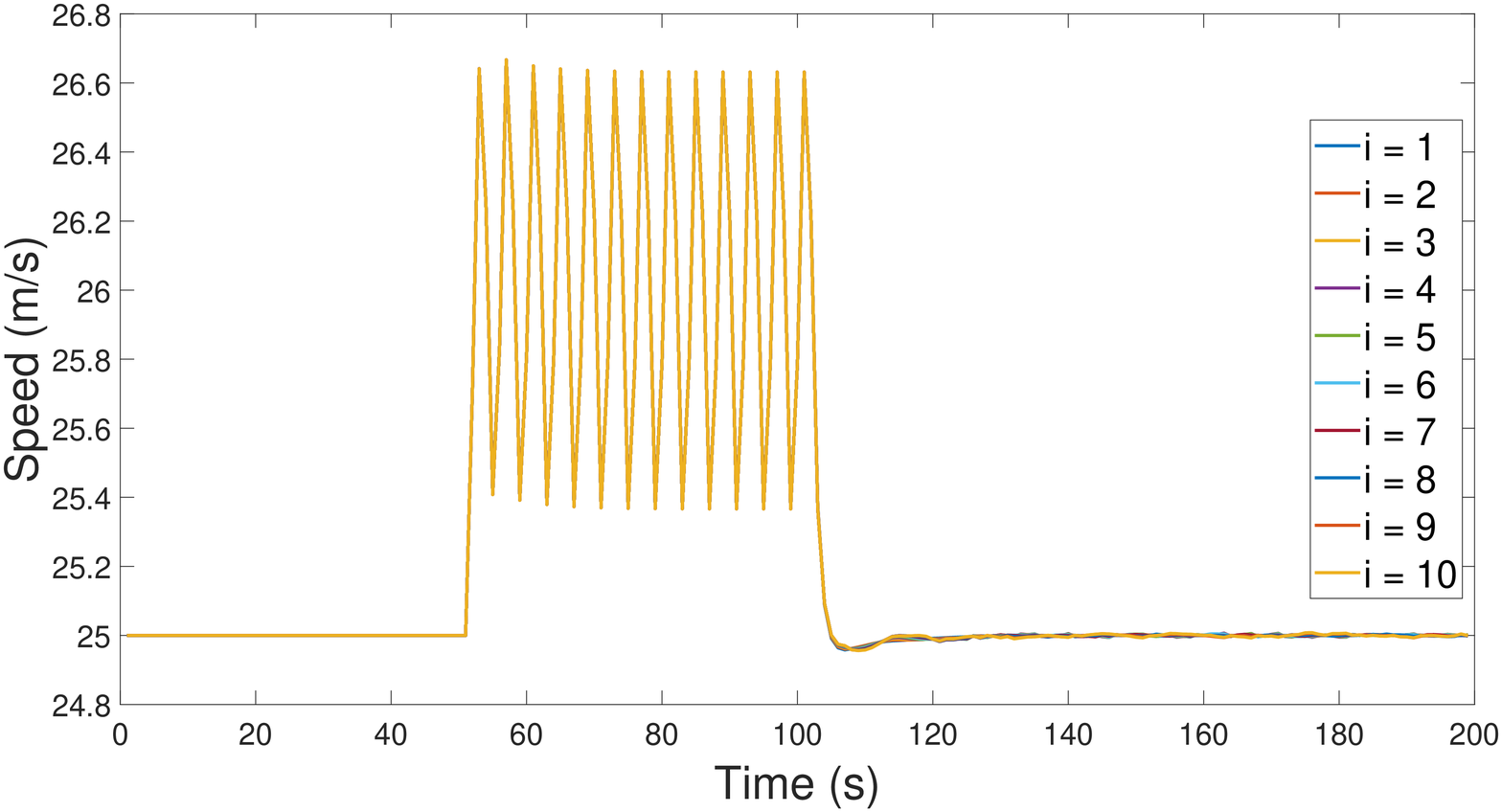}
}
\subfigure[Time history of control input.]{ %\label{E8AD}
\includegraphics[width=0.48\columnwidth]{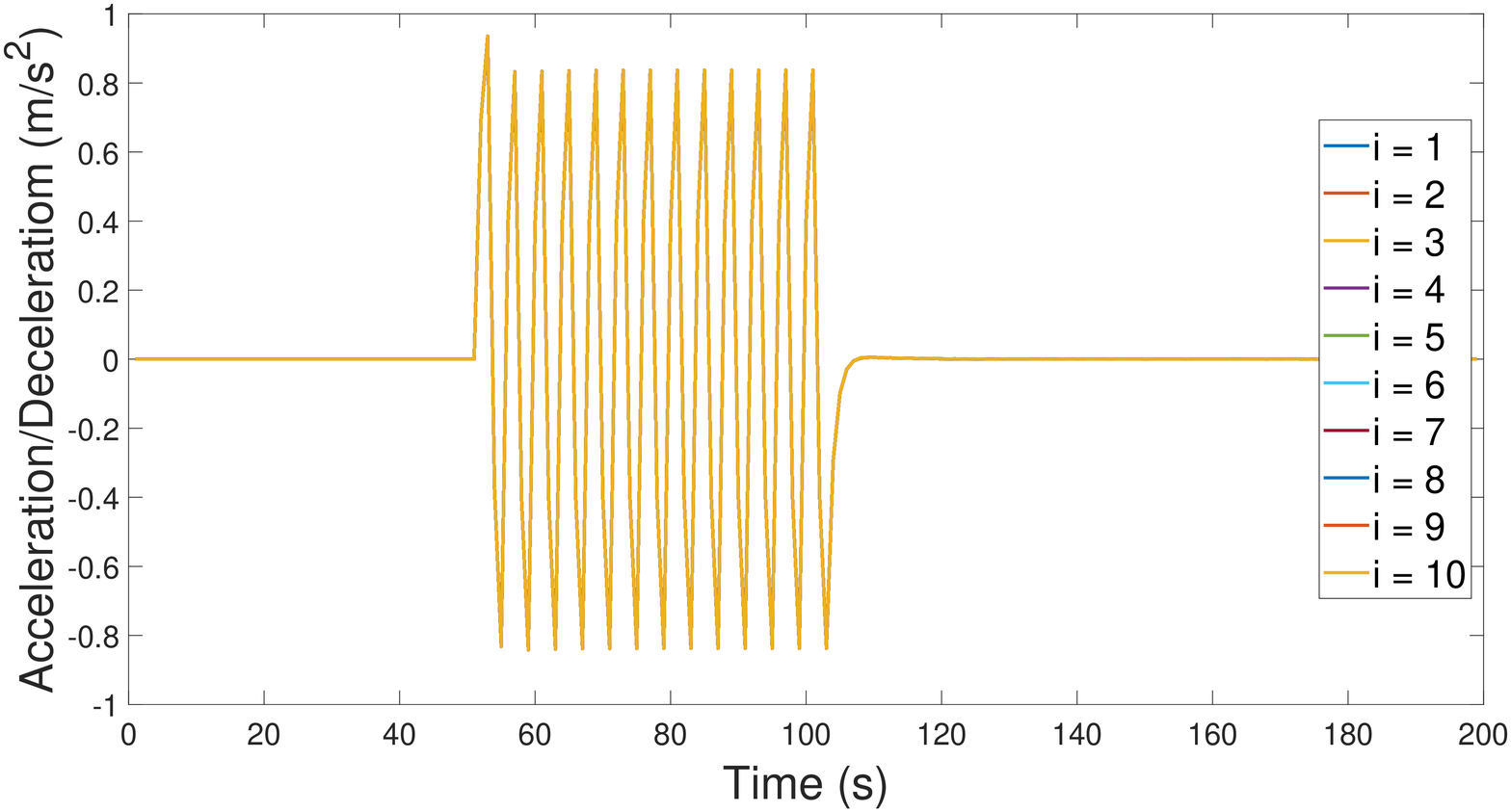}
}
\subfigure[Time history of control input] { %\label{E11AD}
\includegraphics[width=0.48\columnwidth]{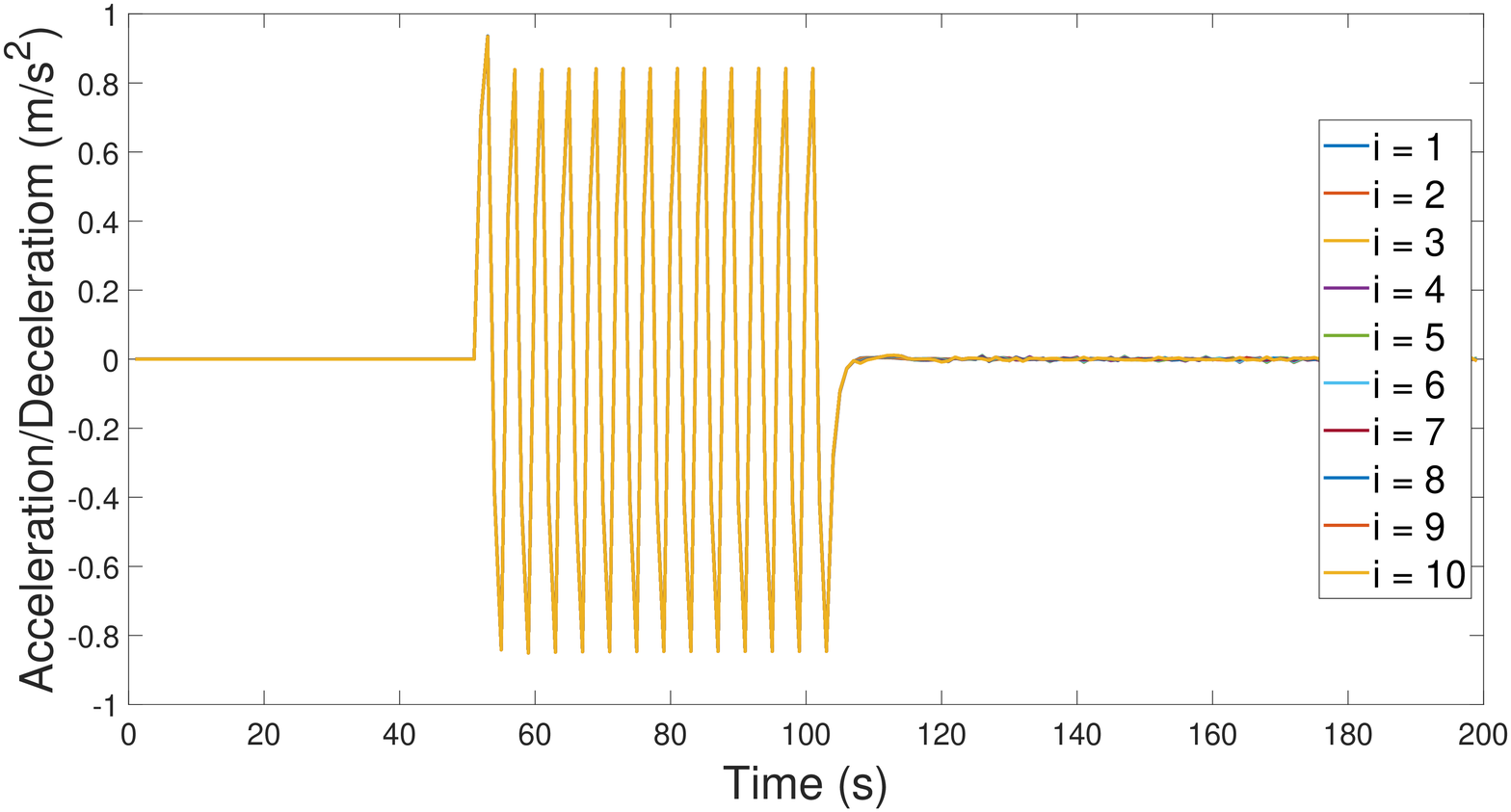}
}
%
%\subfigure[DFT spectrum of detrended  speed for $i=0$, $1$ and $9$.]{ \label{E9}
%\includegraphics[width=0.48\columnwidth]{E9.eps}
%}
%\subfigure[DFT spectrum of detrended speed for $i=0$, $1$ and $9$.] { \label{E12}
%\includegraphics[width=0.48\columnwidth]{E12.eps}
%}
\caption{Scenario 2: the proposed CAV platooning control with $p=1$ (left column) and $p=5$ (right column).}
\label{Fig:S2}
\end{figure}

%%%%%%%%%%%%%%%%%%%%%%%%%%%%%%%%%%%%%%%%%%%%%%%%%%%%%%%%%%%%%%%%%%%%

\begin{figure}[htbp]
\centering
\subfigure[Time history of spacing changes.]{ %\label{S1_p1_spacing}
\includegraphics[width=0.48\columnwidth]{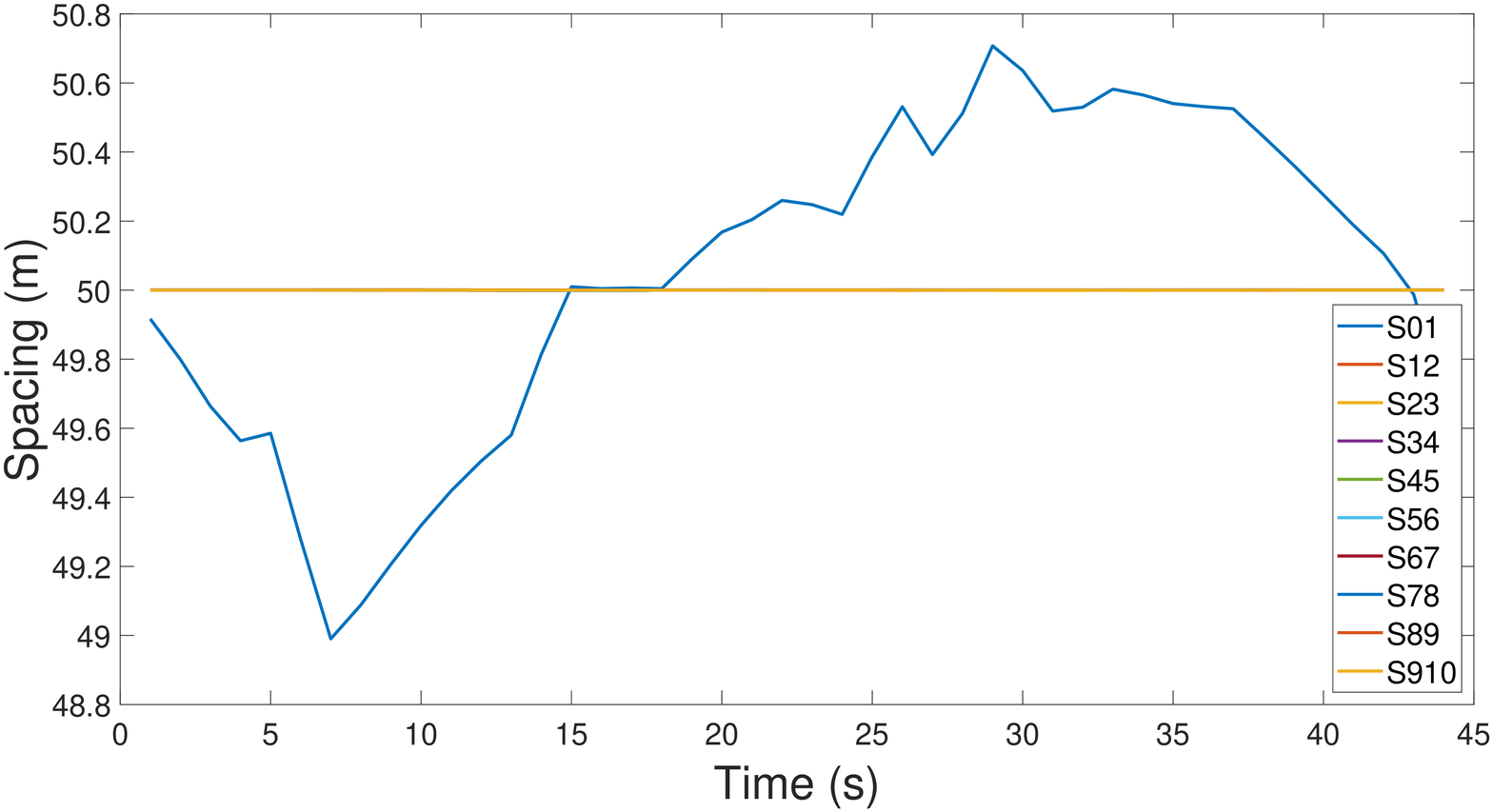}
}
\subfigure[Time history of spacing changes.] { %\label{S1_p5_spacing}
\includegraphics[width=0.48\columnwidth]{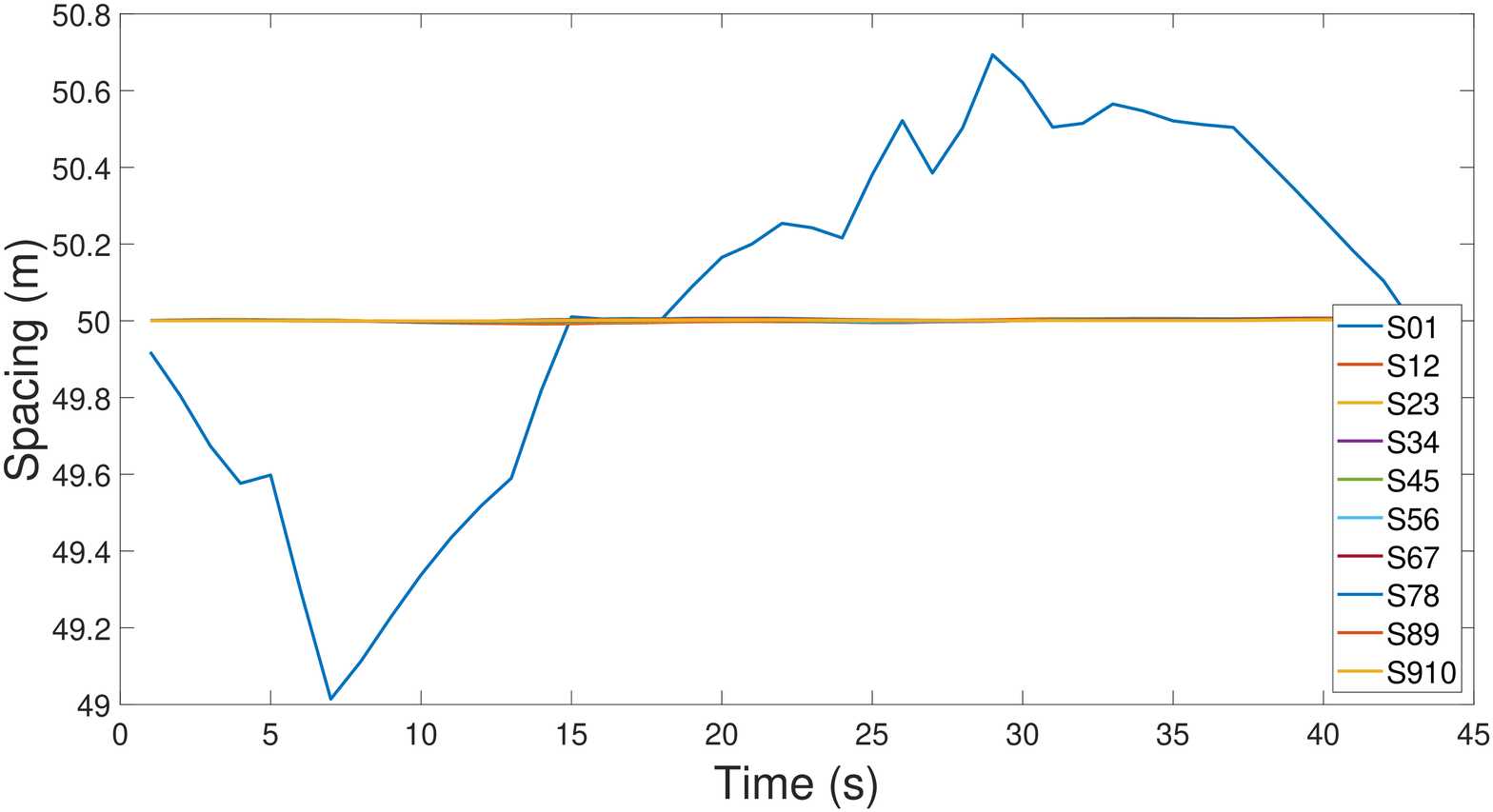}
}
\subfigure[Time history of vehicle speed. ]{ %\label{S1_p1_speed}
\includegraphics[width=0.48\columnwidth]{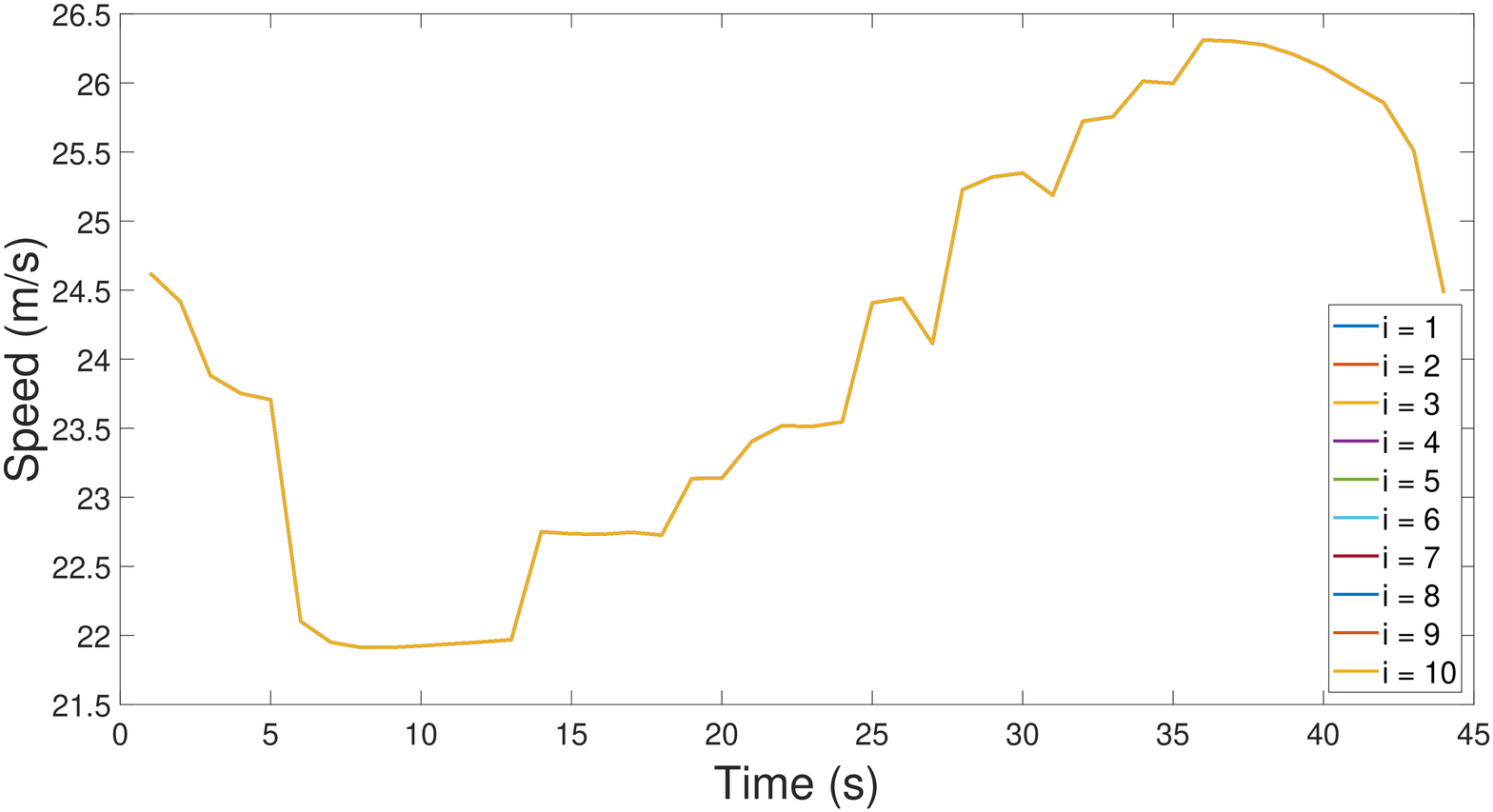}
}
\subfigure[Time history of vehicle speed.] { %\label{E11}
\includegraphics[width=0.48\columnwidth]{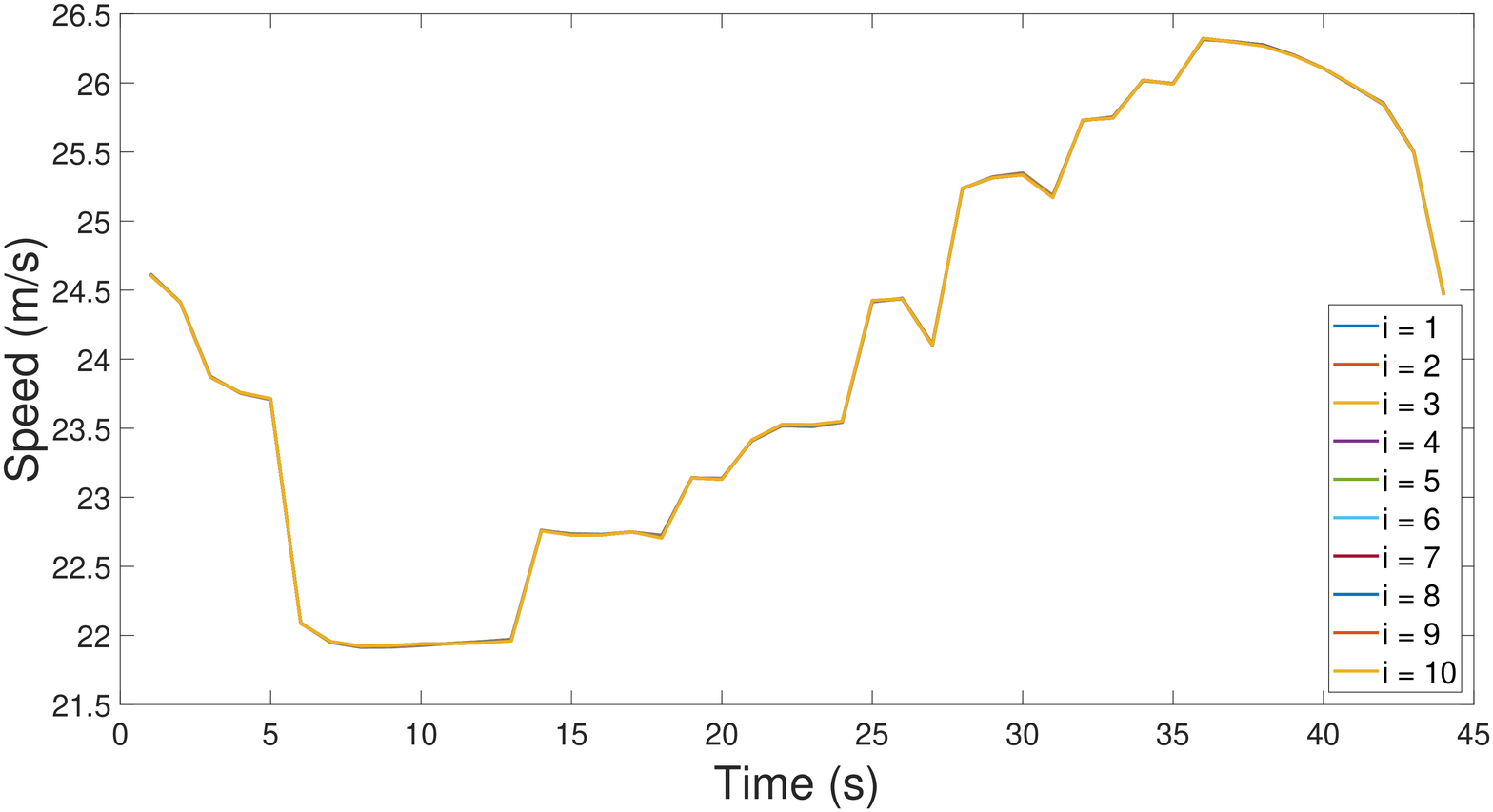}
}
\subfigure[Time history of control input.]{ %\label{E8AD}
\includegraphics[width=0.48\columnwidth]{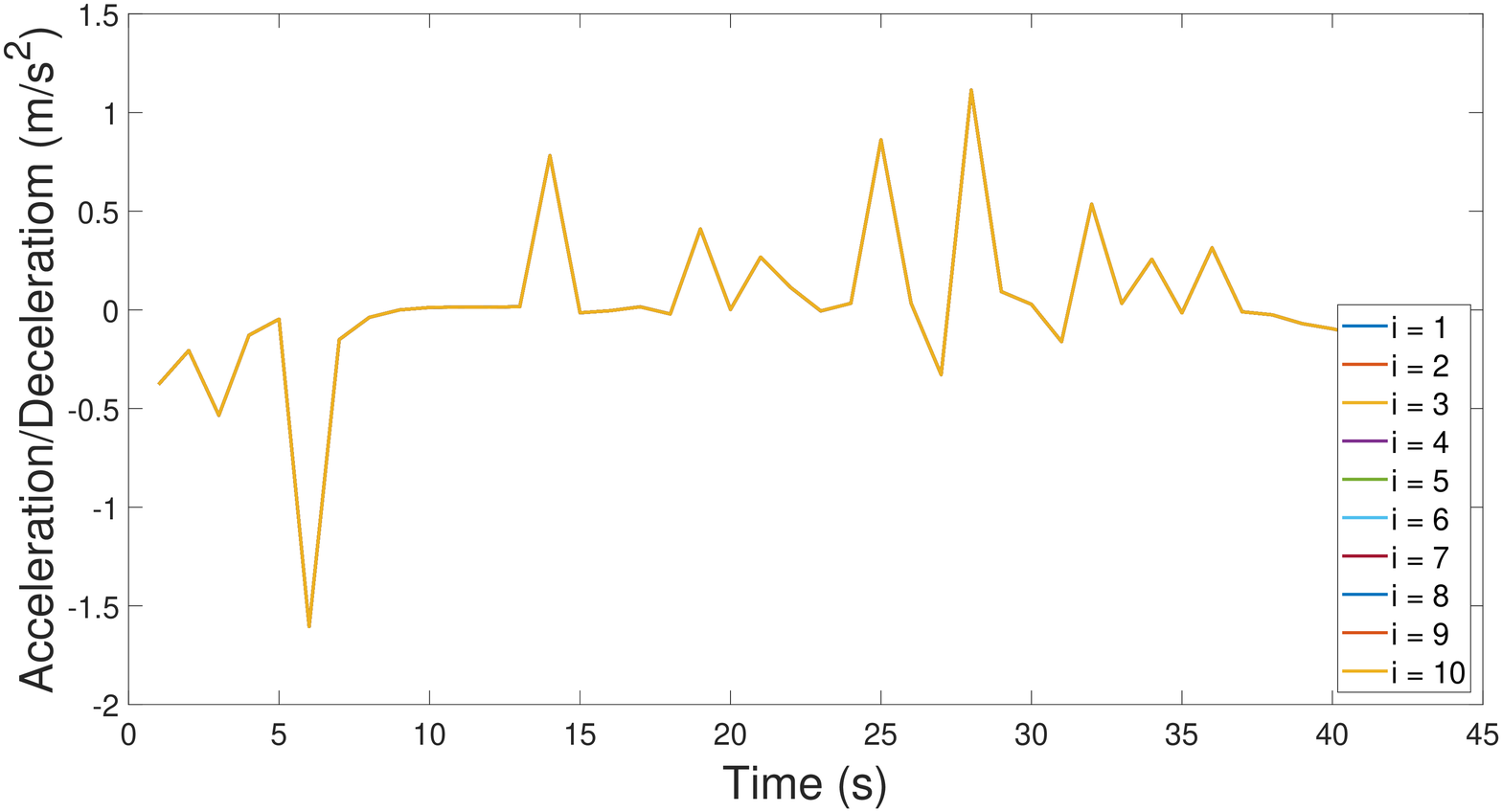}
}
\subfigure[Time history of control input] { %\label{E11AD}
\includegraphics[width=0.48\columnwidth]{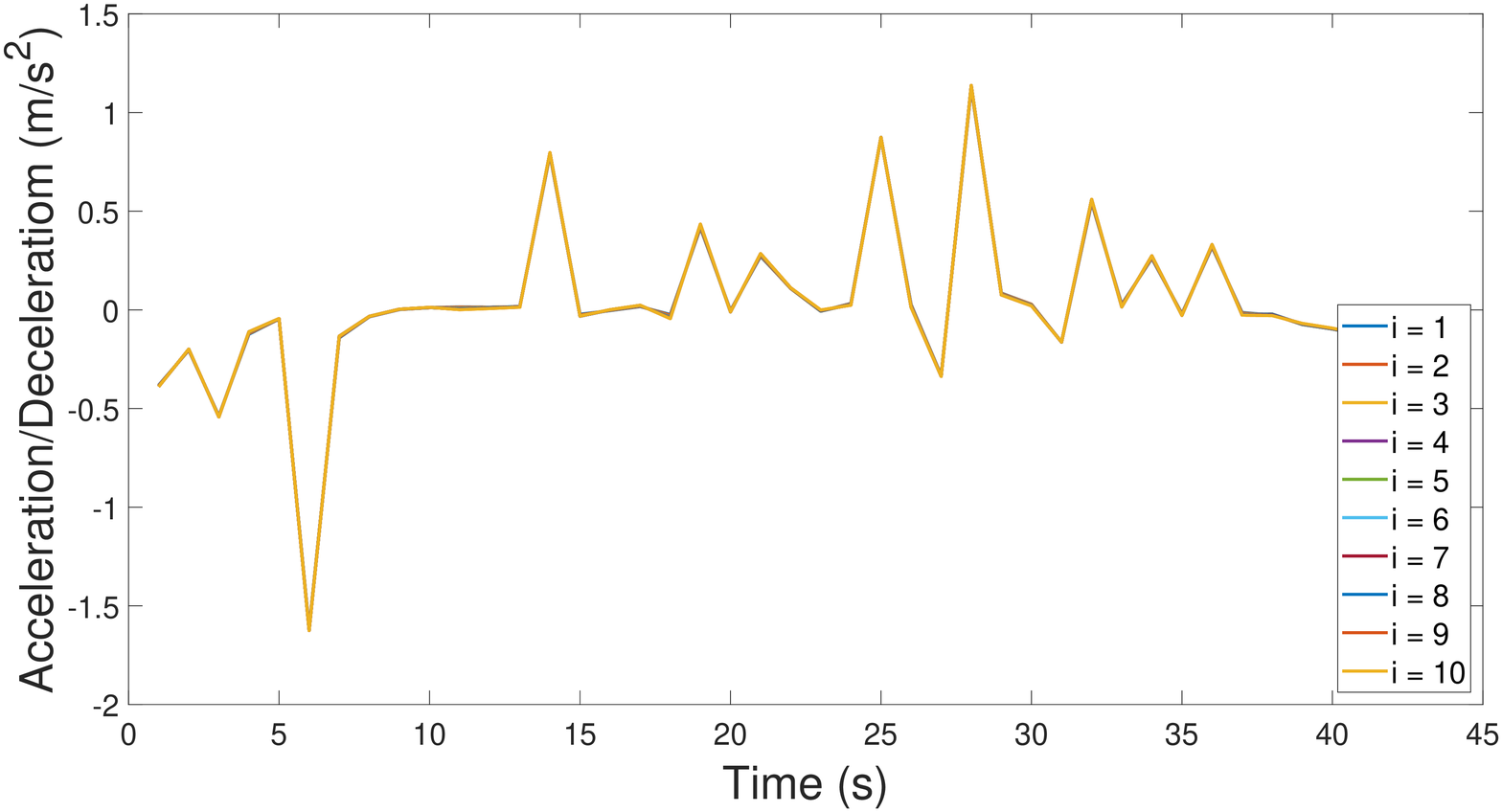}
}
%
%\subfigure[DFT spectrum of detrended  speed for $i=0$, $1$ and $9$.]{ \label{E9}
%\includegraphics[width=0.48\columnwidth]{E9.eps}
%}
%\subfigure[DFT spectrum of detrended speed for $i=0$, $1$ and $9$.] { \label{E12}
%\includegraphics[width=0.48\columnwidth]{E12.eps}
%}
\caption{Scenario 3: the proposed CAV platooning control with $p=1$ (left column) and $p=5$ (right column).}
\label{Fig:S3}
\end{figure}

\begin{figure}[htbp]
\centering
\subfigure[Time history of spacing changes.]{ %\label{S1_p1_spacing}
\includegraphics[width=0.48\columnwidth]{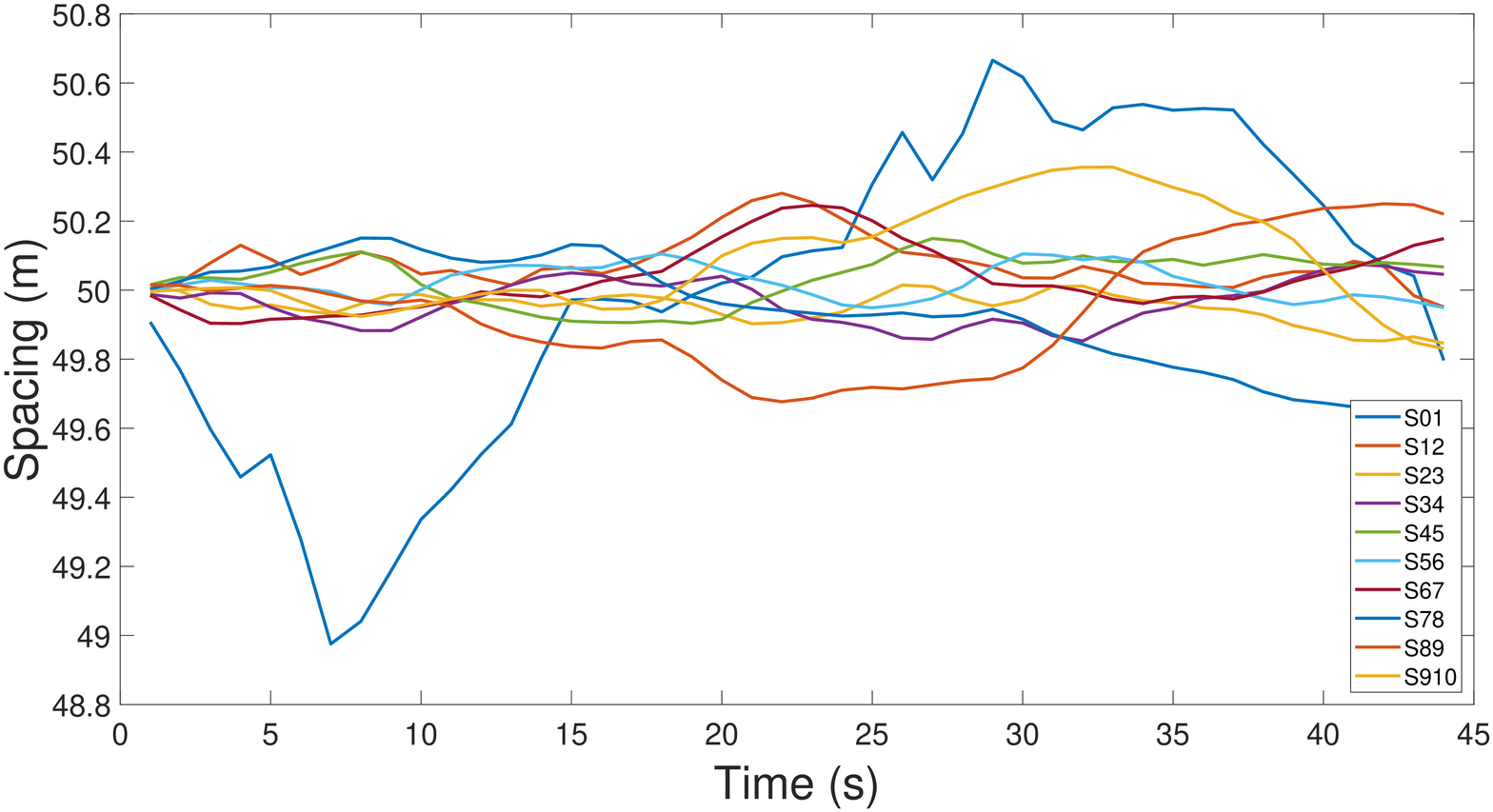}
}
\subfigure[Time history of spacing changes.] { %\label{S1_p5_spacing}
\includegraphics[width=0.48\columnwidth]{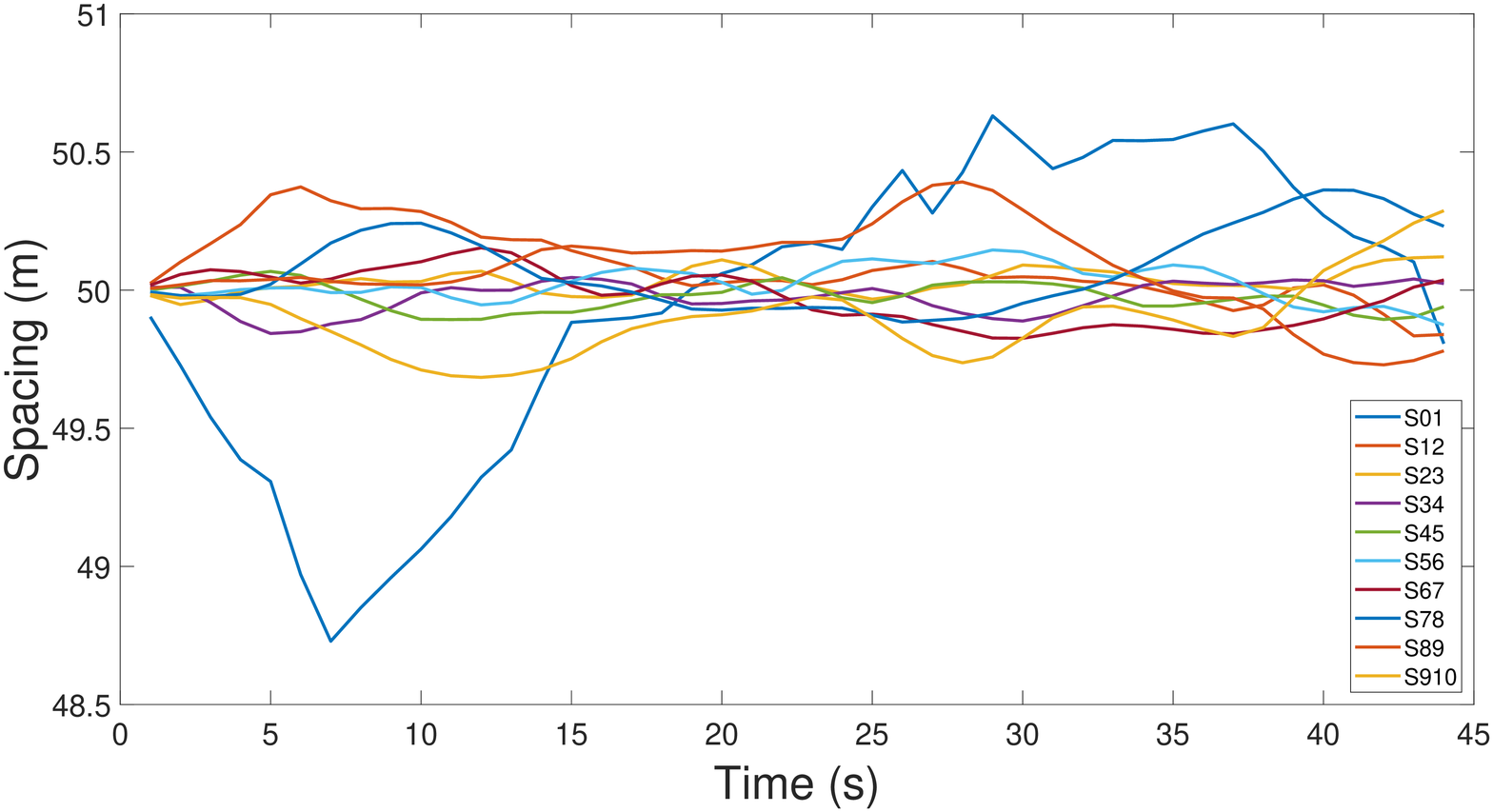}
}
\subfigure[Time history of vehicle speed. ]{ %\label{S1_p1_speed}
\includegraphics[width=0.48\columnwidth]{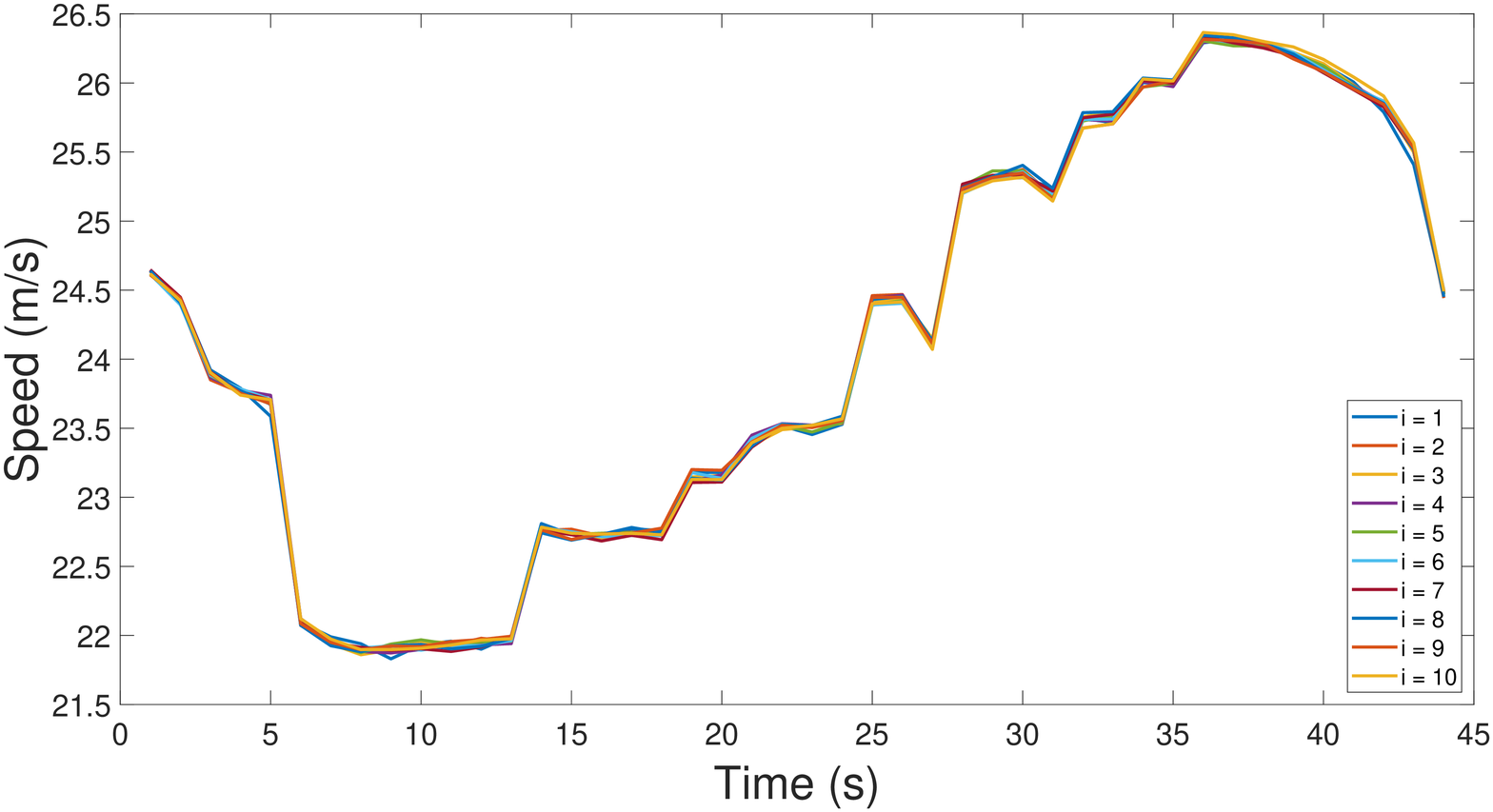}
}
\subfigure[Time history of vehicle speed.] { %\label{E11}
\includegraphics[width=0.48\columnwidth]{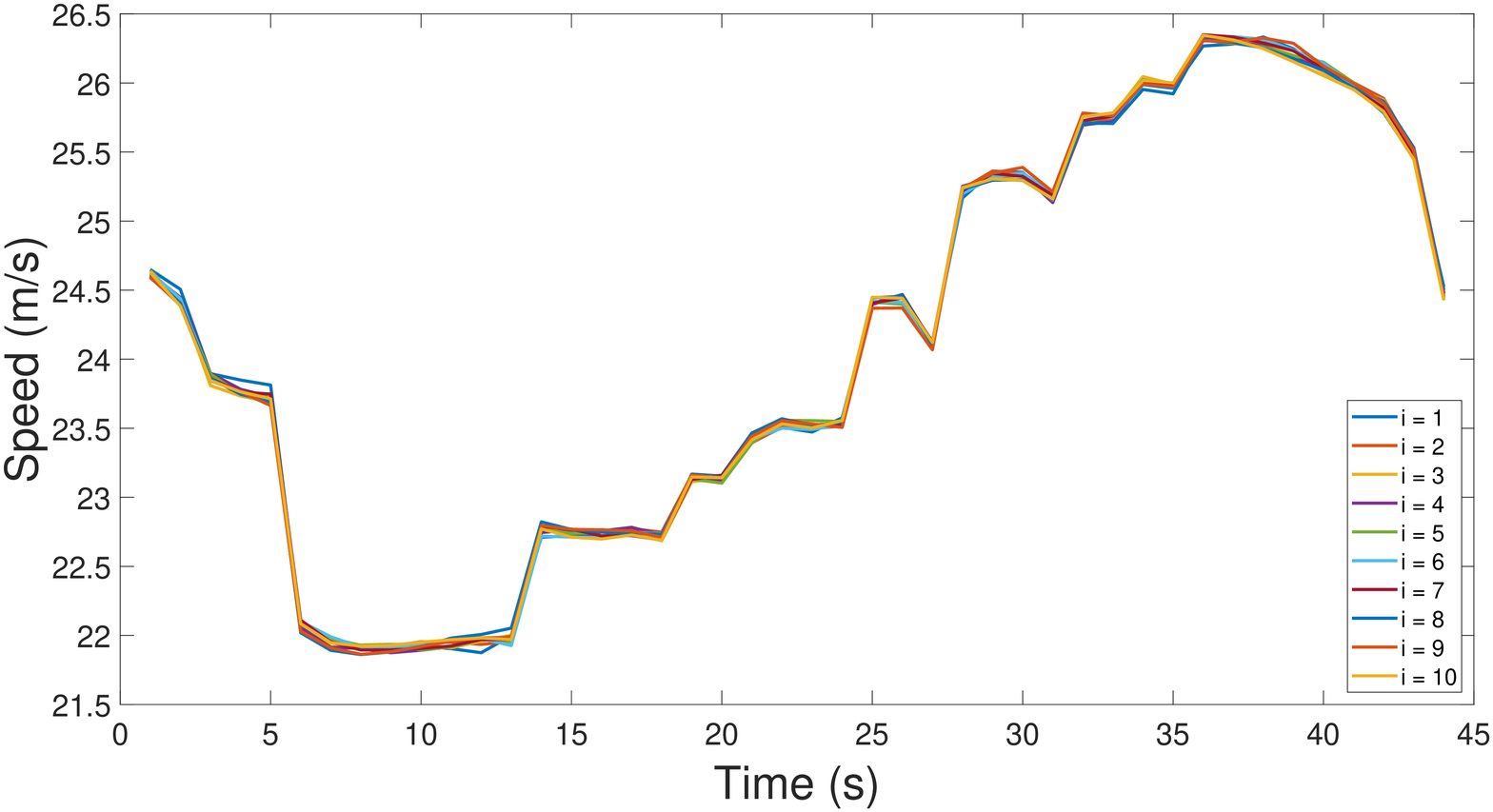}
}
\subfigure[Time history of control input.]{ %\label{E8AD}
\includegraphics[width=0.48\columnwidth]{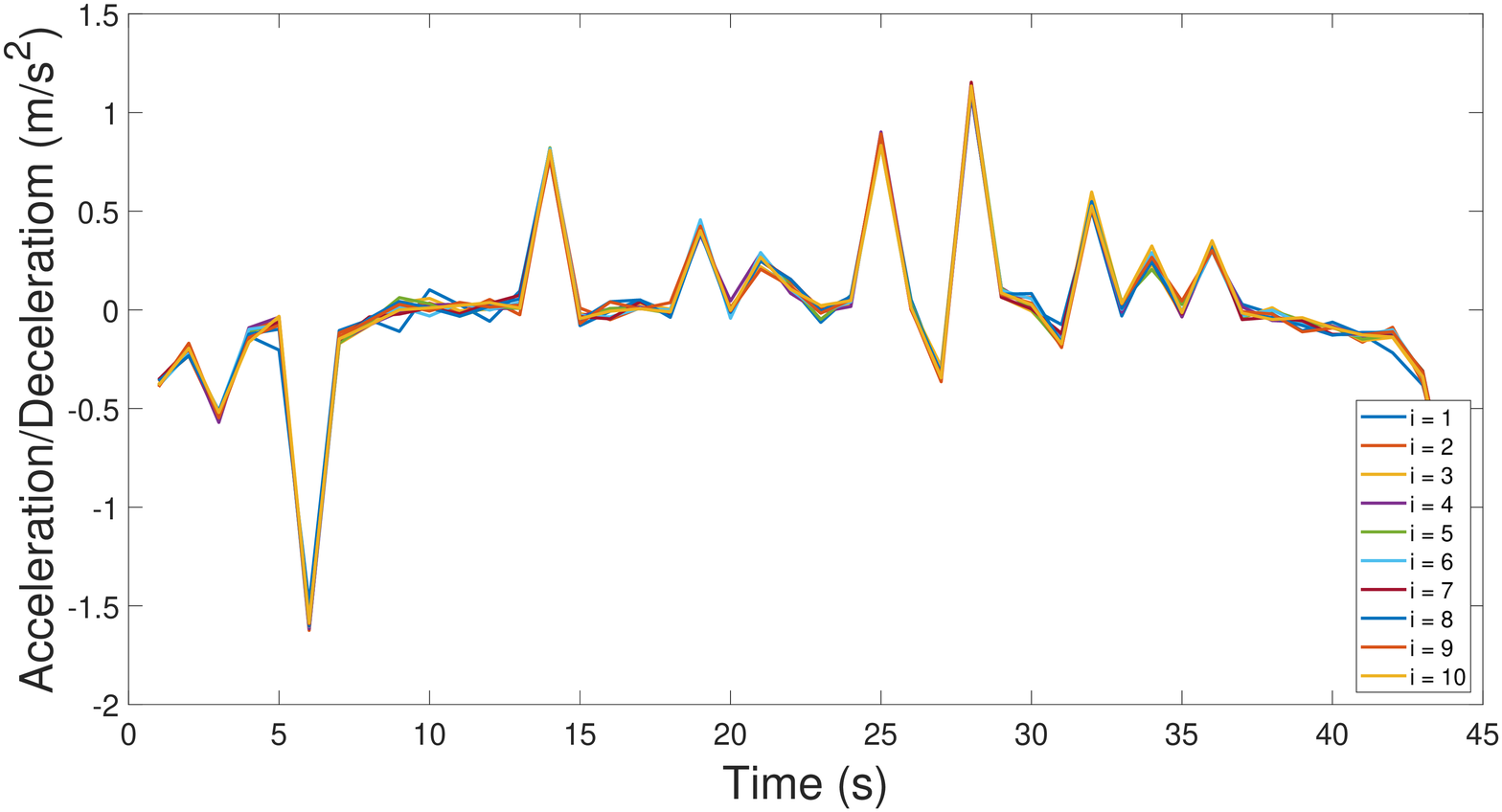}
}
\subfigure[Time history of control input] { %\label{E11AD}
\includegraphics[width=0.48\columnwidth]{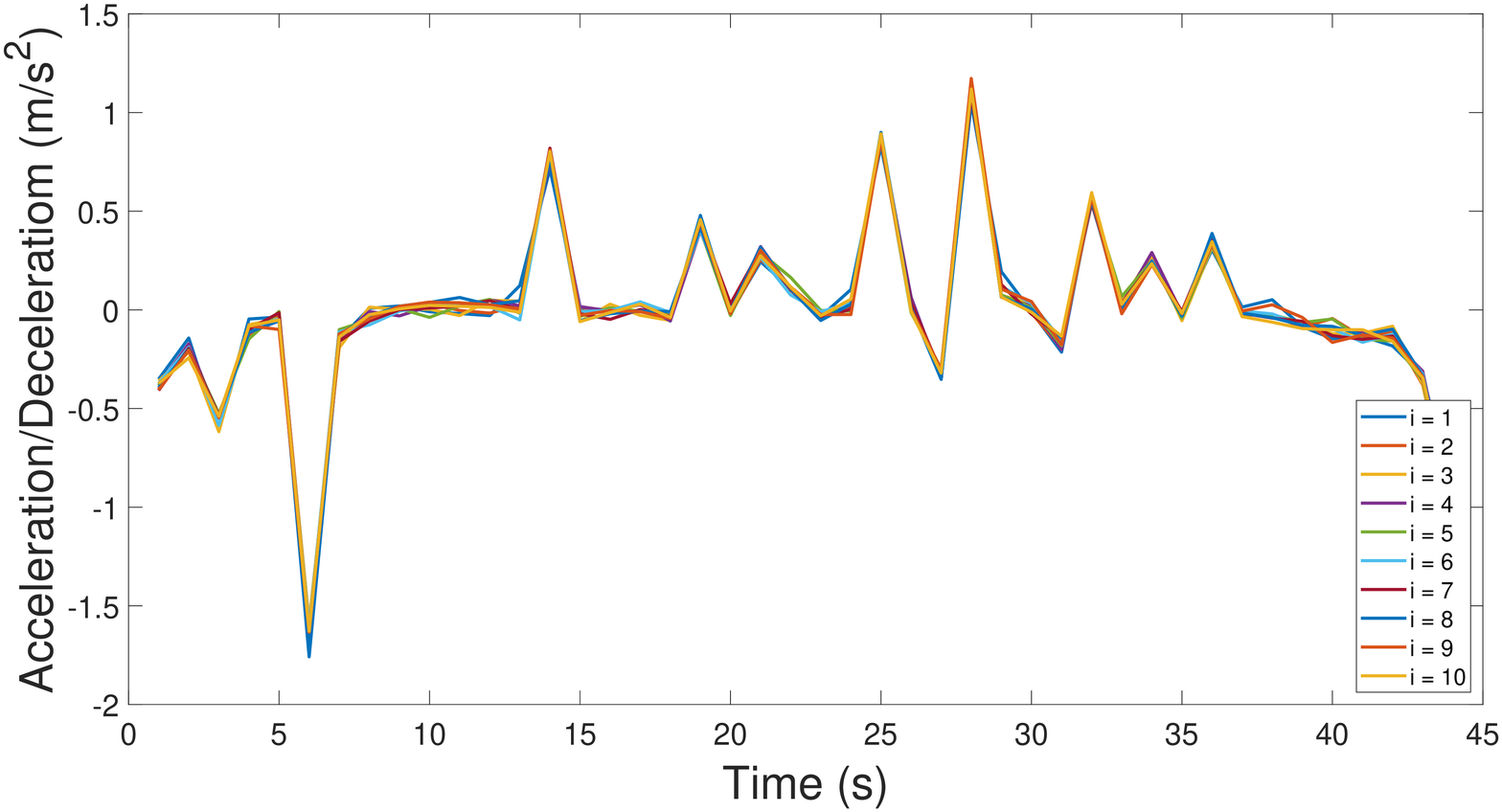}
}
%
%\subfigure[DFT spectrum of detrended  speed for $i=0$, $1$ and $9$.]{ \label{E9}
%\includegraphics[width=0.48\columnwidth]{E9.eps}
%}
%\subfigure[DFT spectrum of detrended speed for $i=0$, $1$ and $9$.] { \label{E12}
%\includegraphics[width=0.48\columnwidth]{E12.eps}
%}
\caption{Scenario 3 under noises: the proposed CAV platooning control with $p=1$ (left column) and $p=5$ (right column).}
\label{Fig:S3_noise}
\end{figure}

\gap

\noindent{\bf Performance of the CAV platooning control}. \
We discuss the closed-loop performance of the proposed CAV platooning control for the  three aforementioned scenarios with different MPC horizon $p$'s. In each scenario, we evaluate the performance of the spacing between two neighboring vehicles (i.e., $S_{i-1, i}(k):=x_{i-1}(k) - x_{i}(k)=z_{i}(k)+\Delta$), the vehicle speed $v_i(k)$, and the control input $u_i(k), \, i=1, \ldots, n$ for $p=1, 2, 3, 4, 5$. Due to the paper length limit, we present the closed-loop performance for $p=1$ and $p=5$ only for each scenario; see Figures~\ref{Fig:S1}-\ref{Fig:S3} for (noise free) Scenarios 1-3 respectively, and Figure~\ref{Fig:S3_noise} for Scenario 3 with noises.
In fact, it is observed from these figures (and the other tests) that there is little difference in control performance between $p=1$ and a higher $p$, e.g., $p=5$.
 We comment more on the closed-loop performance of each scenario as follows:
\begin{itemize}
  \item [(i)] Scenario 1.
   Figure~\ref{Fig:S1} shows that the spacing between the uncontrolled leading vehicle and the first CAV, i.e., $S_{0, 1}$, has mild deviation from the desired $\Delta$ when the leading vehicle performs instantaneous acceleration or deceleration, while the spacings between the other CAVs remain the desired constant $\Delta$. Further, it takes about $35s$ for $S_{0, 1}$ to converge to the steady state with the maximum spacing deviation $2.66m$. The similar performance can be observed for the vehicle speed and control input. In particular, it can be seen that all the CAVs show the exactly same speed change and control, implying that the CAV platoon performs a ``coordinated'' motion with ``consensus'' under the proposed platooning control.
 \item [(ii)]
Scenario 2.  Figure~\ref{Fig:S2} displays that under the periodic acceleration/deceleration of the leading vehicle, the CAV platoon also demonstrates a ``coordinated'' motion with ``consensus''. For example, only $S_{0, 1}$ demonstrates mild fluctuation, whereas the spacings between the other CAVs remain the desired constant, and all the CAVs show the exactly same speed change and control. Moreover, under the proposed platooning control, the oscillations of $S_{0, 1}$ are relatively small with the maximal magnitude less than $0.22m$. Such oscillations quickly decay to zero within $30s$ when the leading vehicle stops its periodical acceleration/deceleration.

  \item [(iii)] Scenario 3. In this scenario, the leading vehicle undergoes various traffic oscillations through the time window of $45s$. In spite of such oscillations, it is seen from Figure~\ref{Fig:S3}  that
     only $S_{0, 1}$ demonstrates small spacing variations with the maximum magnitude less than $1m$, but the spacings between the other CAVs remain almost constant $\Delta$ through the entire time window. This shows that the CAV platoon also demonstrates a ``coordinated'' motion with ``consensus'' as in Scenarios 1-2.

   \item [(iv)] Scenario 3 subject to noises.
     Figure~\ref{Fig:S3_noise} shows the control performance of the CAV platoon in Scenario 3 under noises.
     It can be seen that there are more noticeable spacing  deviations from the desired constant $\Delta$ for all CAVs due to the noises. However, the variation of $S_{0, 1}$ remains to be within $1 m$, and
     the maximum deviation of each $S_{i-1, i}$ with $i\ge 2$ is less than $0.5 m$. Further, the profiles of the CAV speed and control still demonstrate   a nearly ``coordinated'' motion in spite of the noises.
\end{itemize}
%

%%%%%%%%%%%%%%%%%%%%%%%%%%%%%%%%%%%%%%%%%%%%%%%%%%%%%%%%%%%%%%%%%%%%

In summary, the proposed platooning control effectively mitigates traffic oscillations of the spacing and vehicle speed of the platoon; it actually achieves a (nearly) consensus motion of the entire CAV platoon even under small random noises and perturbations.
Compared with other platoon centered approaches, e.g., \cite{GShenDu_TRB16},  the proposed control  scheme performs better since it uses different weight matrices that
lead to decoupled closed loop dynamics; this choice of the weight matrices also facilitates the development of fully distributed computation.

%---------------------------------------------------
%
\section{Conclusion} \label{sect:conclusion}

The present paper develops fully distributed optimization based MPC schemes for CAV platooning control under the linear vehicle dynamics. Such schemes do not require centralized data processing or computation and are thus applicable to a wide range of vehicle communication networks.
New techniques are exploited to develop these schemes, including a new formulation of the MPC model, a decomposition method for a strongly convex quadratic objective function, formulating the underlying optimization problem as locally coupled optimization, and Douglas-Rachford method based distributed schemes. Control design and stability analysis of the closed loop dynamics is carried out for the new formulation of the MPC model. Numerical tests are conducted to illustrate the effectiveness of the proposed fully distributed schemes and CAV platooning control. Our future research will address nonlinear vehicle dynamics and robust issues under uncertainties, e.g., model mismatch, sensing errors, and communication delay.

%-----------------------------------------------------
%
\section{Acknowledgements}

This research work is supported by the NSF grants  CMMI-1901994 and  CMMI-1902006. We thank Benjamin Hyatt of University of Maryland Baltimore County for his contribution to the  proof of the closed loop stability when $p=3$.

%%%\newpage

%%\newpage

%%%%%%%%%%%%%%%%%%%%%%%%%%%%%%%%%%%%%%%%%%%%%%%%%%%%%%%%%%%%%%%%%%%%%%%%%%%%%%%%%%%%%%%%

%\bibliographystyle{trb}
%%%\bibliography{trb_template}
%

{\small

\bibliographystyle{abbrv}
\bibliography{CAV_linear_bib02}
}

\end{document}